\newtheoremstyle{mythm}
{3pt}
{3pt}
{\itshape}
{}
{\bfseries}
{.}
{.5em}
{\thmnote{#1 }#3}
\newtheorem{thm}{Theorem}[section]
\newtheorem{lem}[thm]{Lemma}
\newtheorem{cor}[thm]{Corollary}
\newtheorem{pro}[thm]{Proposition}
\newtheorem*{qn*}{Question}
\newtheorem*{lem*}{Lemma}
\newtheorem{thm*}{Theorem}
\newtheorem{pro*}[thm*]{Proposition}
\newtheorem{cor*}[thm*]{Corollary}
\newtheorem*{thm**}{Theorem}
\theoremstyle{mythm}
\theoremstyle{definition}
\newtheorem{dfn}[thm]{Definition}
\theoremstyle{remark}
\newtheorem*{claim*}{Claim}
\newtheorem*{fact*}{Fact}
\newenvironment{claim}[1]{%
    \claiminner
}{\endclaiminner}
\newcommand{\ep}{
    \epsilon
}
\newcommand{\mc}[1]{
    \mathcal{#1}
}
\newcommand{\mb}[1]{
    \mathbb{#1}
}
\newcommand{\T}{
    \mc{T}
}
\begin{document}
    
\title{Length functions in Teichmüller and anti de Sitter geometry}

\author{Filippo Mazzoli}
\address{Department of Mathematics, University of Virginia, Charlottesville, USA}
\email{filippomazzoli@me.com}

\author{Gabriele Viaggi}
\address{Department of Mathematics, Heidelberg University, Heidelberg, Germany}
\email{gviaggi@mathi.uni-heidelberg.de}
    
\begin{abstract}
We establish a link between the behavior of length functions on Teichmüller space and the geometry of certain anti de Sitter 3-manifolds. As an application, we give new purely anti de Sitter proofs of results of Teichmüller theory such as (strict) convexity of length functions along shear paths and geometric bounds on their second variation along earthquakes. Along the way, we provide shear-bend coordinates for Mess' anti de Sitter 3-manifolds.
\end{abstract}
    
\maketitle

\section{Introduction}
The space $\T$ of hyperbolic metrics on a closed orientable surface $\Sigma$ of genus $g\ge 2$ up to isotopy, known as {\em Teichmüller space}, is an object that appears ubiquitously as a space of parameters but also as a geometric object. 

Comparing different hyperbolic metrics on $\Sigma$ according to various measurements of distortion endows $\T$ with a wealth of geometry. An example is the {\em Lipschitz distortion} which corresponds to the so-called {\em Thurston's asymmetric metric}. Thurston proves in \cite{T86} that given hyperbolic metrics $g_X,g_Y$ on $\Sigma$ we have
\[
\min_{\text{\rm $f$ homotopic to Id}}\{{\rm Lip}(f)\left|\;f:(\Sigma,g_X)\to(\Sigma,g_Y)\right.\}=\sup_{\gamma\in\pi_1(\Sigma)-\{1\}}{\frac{L_Y(\gamma)}{L_X(\gamma)}} 
\]
where $L_X(\gamma),L_Y(\gamma)$ are the lengths of the geodesic representatives of $\gamma$ with respect to $g_X,g_Y$. 

This phenomenon of expressing the measurement of distortion in terms of {\em length spectra} $L_Z(\bullet)$ is not exclusive of the Thurston metric, for example also the {\em Teichmüller} and {\em Weil-Petersson metrics} on $\T$ have this property. 

It is therefore important to understand how length functions behave on Teichmüller space. Often, this behavior is related to certain {\em geometric structures} on low dimensional manifolds. A celebrated example is the relation between {\em quasi-Fuchsian} hyperbolic $3$-manifolds and Teichmüller geodesics discovered by Minsky \cite{M93}. 

Following an analogy between quasi-Fuchsian 3-manifolds and the so-called {\em Mess 3-manifolds}, in this article we bring together:
\begin{itemize}
\item{3-dimensional anti de Sitter geometry.}
\item{Convexity of length functions along {\em shear paths} and {\em earthquakes}.} 
\end{itemize}

In particular, we use the global scale geometry of Mess manifolds to give a proof of (strict) convexity of length functions. Using the same bridge, we also develop geometric bounds for the second variation on those functions along {\em earthquakes}. Our methods are inspired from ideas in 3-dimensional hyperbolic geometry.

\subsection{Anti de sitter geometry}
Anti de Sitter geometry in dimension 3. is the geometry of $\mb{H}^{2,1}:={\rm PSL}_2(\mb{R})$ endowed with its natural pseudo-Riemannian metric of signature $(2,1)$. The link between Teichmüller theory and anti de Sitter 3-manifolds comes from the basic fact that the group of symmetries of this space is
\[
{\rm Isom}_0({\rm PSL}_2(\mb{R}))={\rm PSL}_2(\mb{R})\times{\rm PSL}_2(\mb{R})
\]
where $(A,B)\cdot X:=AXB^{-1}$ and, at the same time, ${\rm PSL}_2(\mb{R})={\rm Isom}^+(\mb{H}^2)$. A vast literature explores various aspects of this relation starting with the seminal work of Mess \cite{M07} (for a survey on the topic and recent developments see \cite{BS20}).

\subsection*{Mess representations}
Let $\Sigma$ be a closed orientable surface of genus $g\ge 2$ that we fix once and for all. We denote by $\Gamma:=\pi_1(\Sigma)$ its fundamental group. 

We realize the Teichmüller space $\T$ of hyperbolic metrics on $\Sigma$ up to isotopy as a component of the representation space
\[
\T\subset{\rm Hom}(\Gamma,{\rm PSL}_2(\mb{R}))/{\rm PSL}_2(\mb{R})
\]
by associating to each hyperbolic structure $X$ its holonomy representation $\rho_X:\Gamma\to{\rm PSL}_2(\mb{R})$.

Given $X,Y\in\T$ we can consider the corresponding {\em Mess representation} 
\[
\rho_{X,Y}=(\rho_X,\rho_Y):\Gamma\to{\rm PSL}_2(\mb{R})\times{\rm PSL}_2(\mb{R}).
\]
The group $\rho_{X,Y}(\Gamma)$ acts on $\mb{H}^{2,1}$ {\em convex cocompactly}, meaning that:
\begin{itemize}
\item{There is an equivariant boundary map $$\xi:\partial\Gamma\to\partial{\rm PSL}_2(\mb{R})=\mb{P}\{A\in M_2(\mb{R})\left|{\rank }(A)=1\right.\} ,$$
whose image $\xi(\partial\Gamma)=\Lambda_{X,Y}$ has the property that for every $a,b,c\in\partial\Gamma$ the subspace $\mb{P}\{{\rm Span}\{\xi(a),\xi(b),\xi(c)\}\}\cap{\rm PSL}_2(\mb{R})$ is a {\em spacelike plane}, that is, it is isometric to $\mb{H}^2$.}
\item{There is a canonical $\rho_{X,Y}(\Gamma)$-invariant properly convex open subset $\Omega_{X,Y}\subset{\rm PSL}_2(\mb{R})$ on which the action is properly discontinuous.}
\item{We have $\partial\Omega_{X,Y}\cap\partial{\rm PSL}_2(\mb{R})=\Lambda_{X,Y}$ and the group $\rho_{X,Y}(\Gamma)$ acts cocompactly on the convex hull $\mc{CH}_{X,Y}\subset\Omega_{X,Y}$ of $\Lambda_{X,Y}$.}
\end{itemize}

In order to study the geometry of Mess representations, we will use {\em laminations} and {\em pleated surfaces} as we introduced in \cite{MV22a}. Let us briefly recall the construction.

\subsection*{Laminations and pleated surfaces}
A geodesic lamination on a hyperbolic surface $X$ is a $\rho_X(\Gamma)$-invariant closed subset $\lambda\subset\mb{H}^2$ that can be decomposed as a disjoint union of complete geodesics, the {\em leaves} of the lamination. The connected components of $\mb{H}^2-\lambda$ are ideal polygons, the {\em plaques} of the lamination. The lamination is called {\em maximal}, if all the plaques are ideal triangles. Conveniently, the data of a geodesic lamination can be encoded, by recording the endpoints of the leaves, as a $\Gamma$-invariant closed subset of the space of geodesics
\[
\{(x,y)\in\partial\Gamma\times\partial\Gamma\left|\;x\neq y\right.\}/(x,y)\sim(y,x).
\]
This is the point of view that we adopt. 

The boundary map $\xi:\partial\Gamma\to\Lambda_{X,Y}$ and the property of the curve $\Lambda_{X,Y}$ allow us to associate with every maximal lamination $\lambda$ a {\em geometric realization}
\[
{\hat \lambda}:=\bigcup_{(a,b)\in\lambda}{[\xi(a),\xi(b)]}\subset\mc{CH}_{X,Y}
\]
and a {\em pleated set}
\[
{\hat S_\lambda}:={\hat \lambda}\cup\bigcup_{\Delta(a,b,c)\subset\mb{H}^2-\lambda}{\Delta(\xi(a),\xi(b),\xi(c))}\subset\mc{CH}_{X,Y}.
\]
Here $[\xi(a),\xi(b)]$ denotes the {\em spacelike} geodesic with endpoints $\xi(a),\xi(b)$ while $\Delta(\xi(a),\xi(b),\xi(c))$ is the ideal {\em spacelike} triangle contained in the spacelike plane $\mb{P}\{{\rm Span}\{\xi(a),\xi(b),\xi(c)\}\}\cap{\rm PSL}_2(\mb{R})$ with vertices $\xi(a),\xi(b),\xi(c)$.

We have the following structural result: 

\begin{thm**}[Theorems A, B, and C of \cite{MV22a}]
Let $\rho_{X,Y}$ be a Mess representation. Consider a maximal lamination $\lambda\subset\Sigma$. Let ${\hat S_\lambda}\subset\mc{CH}_{X,Y}\subset\Omega_{X,Y}$ be the corresponding {\rm pleated set}. Then:
\begin{enumerate}
\item{${\hat S}_\lambda$ is an {\rm acausal} Lipschitz disk with boundary $\Lambda_{X,Y}$. For every pair points $x,y\in{\hat S}_\lambda$ the geodesic $[x,y]$ joining them is spacelike. In particular ${\hat S}_\lambda$ has a pseudo-metric $d_{\mb{H}^{2,1}}(x,y):=\ell[x,y]$.}
\item{There is an {\rm intrinsic hyperbolic structure} $Z_\lambda\in\T$ associated to $\lambda$ with holonomy $\rho_\lambda:\Gamma\to{\rm PSL}_2(\mb{R})$. For every $\mu\in\mc{ML}_\lambda=\{\mu\in\mc{ML}\left|\;{\rm support}(\mu)\subset\lambda\right.\}$ we have $L_{\rho_\lambda}(\mu)=L_{\rho}(\mu)$.}
\item{There exists a $(\rho_{X,Y}-\rho_\lambda)$-equivariant homeomorphism ${\hat f}:{\hat S}_\lambda\to\mb{H}^2$ which is 1-Lipschitz in the sense that $d_{\mb{H}^{2,1}}(x,y)\ge d_{\mb{H}^2}({\hat f}(x),{\hat f}(y))$ and is totally geodesic on each leaf and plaque.}
\end{enumerate}

Furthermore, we have
\[
L_{\rho_\lambda}(\gamma)\le L_{\rho_{X,Y}}(\gamma)
\]
for every $\gamma\in\Gamma-\{1\}$, with strict inequality if and only if $\gamma$ intersects the {\rm bending locus} of $\hat{S}_\lambda$.
\end{thm**}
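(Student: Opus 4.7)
My plan is to build $\hat S_\lambda$ out of spacelike ideal triangles, verify its acausality using a local Lorentzian computation, construct an equivariant unfolding $\hat f\colon\hat S_\lambda\to\mb{H}^2$ isometric on each plaque and leaf, and then derive both the reverse Lipschitz bound of (3) and the length inequality from a Lorentzian law of cosines across a bending leaf.

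The hypothesis on $\Lambda_{X,Y}$ immediately gives that each edge $[\xi(a),\xi(b)]$ is a spacelike geodesic and each ideal triangle is a spacelike totally geodesic plane; so plaques and leaves of $\hat S_\lambda$ are spacelike. For the acausality of arbitrary pairs $x,y\in\hat S_\lambda$ in (1), I analyze the chord $[x,y]$ when $x\in\Pi_1,y\in\Pi_2$ lie in adjacent plaques sharing a common leaf $\ell$: the normal directions to $\ell$ inside $\Pi_1,\Pi_2$ span a Lorentzian $(+,-)$-plane, so the ``bending'' identifying $\Pi_1$ with $\Pi_2$ is a boost fixing $\ell$ pointwise, and since adjacent plaques sit on \emph{opposite} sides of $\ell$ a direct Minkowski calculation verifies that $[x,y]$ remains spacelike; iterating along the finite chain of plaques between $x$ and $y$ completes (1). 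For (2), I fix a base plaque $\Pi_0$, identify it isometrically with an ideal triangle $T_0\subset\mb{H}^2$, and extend by isometric unfolding across each common leaf. The resulting $\rho_{X,Y}$-equivariant homeomorphism $\hat f\colon\hat S_\lambda\to\mb{H}^2$ conjugates the $\Gamma$-action on $\hat S_\lambda$ to a discrete faithful representation $\rho_\lambda$ with quotient $Z_\lambda$; and for $\mu\in\mc{ML}_\lambda$ the leafwise isometry of $\hat f$ gives $L_{\rho_\lambda}(\mu)=L_\rho(\mu)$ after integrating against the transverse measure of $\mu$.

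The key lemma for (3) is a reverse triangle inequality: for two spacelike half-planes $\Pi_1,\Pi_2$ sharing a common spacelike geodesic $\ell$, bent by a timelike dihedral boost $\phi\geq 0$, and points $p\in\Pi_1,q\in\Pi_2$ on opposite sides of $\ell$,
\[
\cosh d_{\mb{H}^{2,1}}(p,q) \ \geq\ \cosh d_{\mb{H}^2}(p,q'),
\]
where $q'$ is the unfolded image of $q$ in $\Pi_1$. A direct computation shows the two squared chord lengths differ by a nonnegative quantity proportional to $|r_p|\,|r_q|(\cosh\phi-1)$, where $r_p,r_q$ are the signed distances from $p,q$ to $\ell$. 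This is the reverse sign from the $\mb{H}^3$-pleated case, reflecting the $\cos\to\cosh$ substitution when a Riemannian dihedral angle is replaced by a Lorentzian boost; the opposite-sides hypothesis is essential because it forces $r_p r_q<0$. Iterating the lemma along the finite chain of bendings between $x$ and $y$ yields $d_{\mb{H}^{2,1}}(x,y)\geq d_{\mb{H}^2}(\hat f(x),\hat f(y))$, with strict inequality as soon as one crossed leaf carries positive bending and the endpoints lie off it.

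For the final length inequality, let $A$ be the spacelike axis of $\rho_{X,Y}(\gamma)$ with endpoints $\xi(\gamma^\pm)\in\Lambda_{X,Y}$. Since $\hat S_\lambda$ is an acausal topological disk in $\mc{CH}_{X,Y}$ with boundary circle $\Lambda_{X,Y}$, a Lorentzian separation argument (or a limit of points in $\hat S_\lambda$ approaching $A$ at infinity) produces $x\in\hat S_\lambda$ with $\rho_{X,Y}(\gamma)x\in\hat S_\lambda$ and $d_{\mb{H}^{2,1}}(x,\rho_{X,Y}(\gamma)x)=L_{\rho_{X,Y}}(\gamma)$; combined with the 1-Lipschitz bound from (3),
\[
L_{\rho_{X,Y}}(\gamma)\ =\ d_{\mb{H}^{2,1}}(x,\rho_{X,Y}(\gamma)x)\ \geq\ d_{\mb{H}^2}(\hat f(x),\rho_\lambda(\gamma)\hat f(x))\ \geq\ L_{\rho_\lambda}(\gamma).
\]
The inequality is strict exactly when the piecewise-plaque path from $x$ to $\rho_{X,Y}(\gamma)x$ inside $\hat S_\lambda$ crosses a leaf of positive bending, i.e., when $\gamma$ intersects the bending locus. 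The main obstacle will be the sign-calibration of the Lorentzian law of cosines---ensuring the AdS chord is bounded \emph{below} rather than above by the unfolded $\mb{H}^2$ chord, which rests on the opposite-sides configuration of adjacent plaques---together with the topological step of placing $x$ on both $A$ and $\hat S_\lambda$.
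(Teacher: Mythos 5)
First, a structural remark: this statement is not proved in the present paper at all. It is imported verbatim from \cite{MV22a} (Theorems A, B, and C there); what the paper contains are citations of the individual ingredients (Propositions 3.7, 3.11, 6.6 and Lemmas 6.2, 6.4 of \cite{MV22a}) together with an independent in-paper derivation of the final length inequality, namely Proposition \ref{pro:lengthA}. Judged against that material, your local Lorentzian input is correct and well calibrated: the law-of-cosines computation across one bend, with the sign reversal $\cos\to\cosh$ so that bending \emph{increases} chords (opposite to the $\mb{H}^3$ case), is exactly the mechanism behind the 1-Lipschitz developing map. The genuine gap is global: all four of your main steps --- acausality, the construction of ${\hat f}$ by unfolding from a base plaque, the reverse Lipschitz bound, and the strictness discussion --- are iterations ``along the finite chain of plaques between $x$ and $y$'', and such a chain does not exist for a general maximal lamination. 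A transverse arc meets $\lambda$ in a Cantor set, so two plaques are generically separated by uncountably many leaves, no two consecutive; even for a finite lamination on $\Sigma$ (a multicurve completed by spiraling leaves) two plaques of ${\hat\lambda}$ lying on opposite sides of a closed leaf are separated by \emph{infinitely} many plaques because of the spiraling. Hence your induction covers only very special pairs of points, and the hard part of Theorems A--C of \cite{MV22a} --- mirrored in Section \ref{sec:sec4} of this paper --- is precisely what is missing: a finite approximation scheme, continuity in the Hausdorff topology on laminations (cf.\ Proposition \ref{thm:continuous extension}), and the treatment of bending as a transverse \emph{measure} rather than finitely many boost angles. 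This is also where the completeness of the intrinsic structure (so that $Z_\lambda$ is genuinely a point of $\T$ with cocompact holonomy $\rho_\lambda$) and the identity $L_{\rho_\lambda}(\mu)=L_\rho(\mu)$ for $\mu\in\mc{ML}_\lambda$ (which first requires defining $L_\rho$ on measured laminations) are established; your proposal glosses both.

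The second gap is the endgame for the length inequality. In general there is \emph{no} point $x\in{\hat S}_\lambda$ with $d_{\mb{H}^{2,1}}(x,\rho_{X,Y}(\gamma)x)=L_{\rho_{X,Y}}(\gamma)$: for a point joined to the axis by an orthogonal timelike (resp.\ spacelike) segment of positive length, the displacement satisfies
\[
\cosh\left(d_{\mb{H}^{2,1}}(y,\rho_{X,Y}(\gamma)y)\right)=\cos(\delta)^2\cosh(L_\rho(\gamma))+\sin(\delta)^2\cosh(\theta_\rho(\gamma))<\cosh(L_\rho(\gamma))
\]
(resp.\ the analogous formula with $\cosh^2,-\sinh^2$, which is $>\cosh(L_\rho(\gamma))$), so equality forces $x$ to lie on the axis itself. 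But take ${\hat S}_\lambda=\partial^+\mc{CH}_{X,Y}$ with $\gamma$ crossing $\lambda^+$: if the axis touched this surface it would lie in a support plane, hence entirely on the surface, contradicting that $\gamma$ meets the bending locus; so the axis is disjoint from ${\hat S}_\lambda$ exactly in the cases of interest, and your point $x$ does not exist. Your fallback --- points of ${\hat S}_\lambda$ tending to an ideal endpoint of the axis --- also fails, because the displacement is governed by the (timelike) distance to the axis, and by equivariance and cocompactness that distance does not decay near $\Lambda_{X,Y}$. The correct endgame, used in Proposition \ref{pro:lengthA}, goes the other way around: take $y\in{\hat S}_\lambda$ realizing the \emph{maximal} timelike distance $\delta$ from the axis, apply Lemma \ref{lem:move orthogonally} to get $\cosh(L_{Z_\lambda}(\gamma))\le\cos(\delta)^2\cosh(L_\rho(\gamma))+\sin(\delta)^2\cosh(\theta_\rho(\gamma))\le\cosh(L_\rho(\gamma))$, and read off the equality case from $\delta=0$, i.e.\ the axis lying inside ${\hat S}_\lambda$, i.e.\ $\gamma$ missing the bending locus. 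Note finally that your strictness discussion addresses only one implication of the stated ``if and only if''; the converse (equality when $\gamma$ avoids the bending locus) needs the observation that the axis then lies in a totally geodesic piece of ${\hat S}_\lambda$ on which ${\hat f}$ is isometric.
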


Mess \cite{M07}, inspired by work of Thurston (Chapter 8 of \cite{ThNotes}), observes that $\partial^\pm\mc{CH}_{X,Y}$ is the pleated set ${\hat S}_{\lambda^\pm}$ of a lamination $\lambda^\pm$ and that measuring the total turning angle along paths $\alpha:I\to\partial^\pm\mc{CH}_{X,Y}$ endows $\lambda^\pm$ with a natural transverse measure, the {\em bending measure}. Then he shows that the surfaces $X,Y$ and $Z_{\lambda^+},Z_{\lambda^-}$ are related by the following diagram
\[
\xymatrix{
&Z_{\lambda^+}\ar[dl]_{E_{\lambda^+}^r}\ar[dr]^{E_{\lambda^+}^l} &\\
X & &Y\\
&Z_{\lambda^-}\ar[ul]^{E_{\lambda^-}^r}\ar[ur]_{E_{\lambda^-}^l} &
}
\]
where $E_{\lambda^+}^l,E_{\lambda^-}^l,E_{\lambda^+}^r,E_{\lambda^-}^r$ are the {\em left} and {\em right earthquakes} induced by the measured laminations $\lambda^+,\lambda^-$.  

Recall that by work of Bonahon \cite{Bo96} and Thurston \cite{T86}, for every maximal geodesic lamination $\lambda$ of $\Sigma$, the Teichmüller space $\T$ can be realized as an open convex cone in a finite dimensional $\mb{R}$-vector space $\mc{H}(\lambda;\mb{R})$ via the so-called {\em shear coordinates} $\sigma_\lambda:\T\to\mc{H}(\lambda;\mb{R})$. Generalizing Mess, we prove:

\begin{thm*}
\label{thm:pleated surfaces ads}
Let $\rho_{X,Y}$ be a Mess representation. Consider a maximal lamination $\lambda\subset\Sigma$. Let $S_\lambda={\hat S_\lambda}/\rho_{X,Y}(\Gamma)$ be the corresponding pleated surface. Then, in {\rm shear coordinates} $\T\subset\mc{H}(\lambda;\mb{R})$ for $\lambda$ we have:
\begin{enumerate}
\item{The {\rm intrinsic hyperbolic structure} $Z_\lambda$ of $S_\lambda$ satisfies 
\[
\sigma_\lambda(Z_\lambda)=\frac{\sigma_\lambda(X)+\sigma_\lambda(Y)}{2}.
\]
}
\item{The {\rm intrinsic bending cocycle} $\beta_\lambda$ of $S_\lambda$ satisfies 
\[
\beta_\lambda=\frac{\sigma_\lambda(X)-\sigma_\lambda(Y)}{2}.
\]
}
\end{enumerate}
\end{thm*}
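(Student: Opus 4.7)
The plan is to prove both identities at the level of individual edges of $\lambda$ and then extend to arbitrary transverse cocycles by linearity. The key structural input is the product decomposition
\[
{\rm Isom}_0(\mb{H}^{2,1})={\rm PSL}_2(\mb{R})_L\times{\rm PSL}_2(\mb{R})_R
\]
combined with the splitting $\xi=(\xi^+,\xi^-)$ of the boundary map, where $\xi^\pm\colon\partial\Gamma\to\partial\mb{H}^2$ are the Fuchsian boundary maps of $\rho_X,\rho_Y$. Projecting a spacelike plaque $\hat{P}\subset\hat{S}_\lambda$ through the $L$-factor (resp.\ $R$-factor) of the ambient ${\rm PSL}_2(\mb{R})$ yields the ideal triangle in $X$ (resp.\ $Y$) spanned by the images of its vertices under $\xi^+$ (resp.\ $\xi^-$). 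This is the AdS analogue of the fact that the two ``shores'' of a quasi-Fuchsian pleated surface recover the two conformal boundary components.

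Fix adjacent plaques $P_1,P_2$ of $\lambda$ sharing an edge $e$, and lift them to $\hat{P}_1,\hat{P}_2\subset\hat{S}_\lambda$, contained in spacelike planes $\Pi_1,\Pi_2$ meeting along the spacelike geodesic $\hat{e}=[\xi(a),\xi(b)]$. The shear-bend transition at $\hat{e}$ is the unique isometry $g\in{\rm Isom}_0(\mb{H}^{2,1})$ that fixes $\hat{e}$ pointwise and carries a consistent marking of $\Pi_1$ to one of $\Pi_2$. Writing $g=(g_L,g_R)$, the previous paragraph identifies $g_L$ with the hyperbolic element of ${\rm PSL}_2(\mb{R})$ whose axis is $[\xi^+(a),\xi^+(b)]$ and whose signed translation length is exactly the shear $\sigma_\lambda(X)(e)$ of $X$ at $e$; symmetrically $g_R$ realizes $\sigma_\lambda(Y)(e)$.

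Now the pointwise stabilizer of $\hat{e}$ in ${\rm Isom}_0(\mb{H}^{2,1})$ is a two-parameter abelian group, generated by spacelike translations along $\hat{e}$ and timelike boosts with axis $\hat{e}$. Under the $(L,R)$-decomposition, spacelike translations correspond to \emph{diagonal} pairs $(t,t)$ (they preserve every spacelike plane through $\hat{e}$), while timelike boosts correspond to \emph{antidiagonal} pairs $(b,b^{-1})$. Hence $(g_L,g_R)$ splits uniquely as a spacelike translation of length $\tfrac{1}{2}(\sigma_\lambda(X)(e)+\sigma_\lambda(Y)(e))$ composed with a boost of length $\tfrac{1}{2}(\sigma_\lambda(X)(e)-\sigma_\lambda(Y)(e))$. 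Unfolding $\hat{P}_1\cup\hat{P}_2$ along $\hat{e}$ into a common spacelike plane identifies the diagonal part with the intrinsic shear $\sigma_\lambda(Z_\lambda)(e)$, yielding (1), and the antidiagonal part with the bending $\beta_\lambda(e)$, yielding (2). The passage from single leaves to arbitrary (possibly irrational) transversals on $\lambda$ is then a standard application of Bonahon's additive formalism for transverse cocycles: all four of $\sigma_\lambda(X),\sigma_\lambda(Y),\sigma_\lambda(Z_\lambda),\beta_\lambda$ are continuous transverse cocycles, so agreement on finitely many leaves suffices.

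The main obstacle is the precise bookkeeping in the second and third paragraphs: choosing compatible markings on $\Pi_1,\Pi_2$, fixing sign conventions so the left/right projections recover $\sigma_\lambda(X)(e),\sigma_\lambda(Y)(e)$ with the right signs, and verifying the diagonal/antidiagonal split of the stabilizer of $\hat{e}$. This is where the classical shear-bend dictionary must be transported from the Riemannian setting of $\mb{H}^3$ (with imaginary bending angles) to the Lorentzian setting of $\mb{H}^{2,1}$ (with real boost parameters). Once this dictionary is in place, both formulas reduce to the symmetric/antisymmetric decomposition of the element $(\sigma_\lambda(X)(e),\sigma_\lambda(Y)(e))\in\mb{R}\oplus\mb{R}$.
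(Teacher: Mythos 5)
Your core mechanism is the one the paper uses: split ${\rm Isom}_0(\mb{H}^{2,1})$ into its left and right ${\rm PSL}_2(\mb{R})$-factors, check that the left/right projections of a spacelike plaque recover the ideal triangles in $X$ and $Y$ spanned by the $h_X$- and $h_Y$-images of its vertices (the paper does this via the Gauss map $g=(g_l,g_r)$ of Lemma \ref{lem:gauss map}), and read off the two formulas as the symmetric and antisymmetric parts of the pair of elementary shears (Lemma \ref{lem:gauss shear}, phrased there through the para-complex cross-ratio of Lemma \ref{lem:para birapporto} rather than through your diagonal/antidiagonal decomposition of the stabilizer of $\hat e$ --- these are equivalent, cf.\ the translation length and torsion of a loxodromic pair). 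Two issues, one small and one substantive. The small one: the transition isometry does not fix $\hat e$ pointwise (its translation part moves points of $\hat e$); your two-parameter abelian group is the identity component of the \emph{setwise} stabilizer. The substantive one: identifying the diagonal part with $\sigma_\lambda(Z_\lambda)(e)$ is not just ``unfolding''. The structure $Z_\lambda$ is defined globally through the $1$-Lipschitz developing map of Proposition \ref{pro:hyperbolic structure}, and one must prove that its shear cocycle is computed plaque-by-plaque by the real part of the para-complex shear; this is exactly Proposition \ref{pro:shear is shear} (Proposition 6.7 of \cite{MV22a}) and is the genuinely nontrivial input, which your proposal asserts rather than establishes.

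The extension step also fails as stated. For a general maximal lamination --- uncountably many leaves, none isolated --- a transverse cocycle is a function of pairs of plaques, not of leaves, so ``agreement on finitely many leaves suffices'' is not a meaningful determination of it; and even in the countably-leaved case your elementary configuration (two adjacent plaques) does not suffice, since one also needs pairs of plaques asymptotic to a common closed leaf from opposite sides (second bullet of Lemma \ref{lem:elementary case}). The paper closes this in two stages: for countably-leaved $\lambda$ every $\sigma_\rho(P,Q)$ is a finite sum of the two elementary types, each of which decomposes as claimed; for general $\lambda$ one uses density of finite-leaved laminations in $\mc{GL}_m$ together with continuity of $\lambda\mapsto\sigma_\rho$ and $\lambda\mapsto\sigma^X_\lambda$ in the weight space $\mc{W}(\tau;\mb{B})$ of a carrying train track (Proposition \ref{thm:continuous extension}). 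Alternatively one can run the finite-approximation limit defining $\sigma_\rho(P,Q)$ within the fixed lamination and check your edge-wise decomposition passes to the limit; either way this step requires an actual argument, not an appeal to additivity.
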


\subsection*{Length functions}

We now come to the main novelty of this article, namely, the anti de Sitter perspective on length functions in Teichmüller theory. 

Let us first recall the following: For every element $\gamma\in\Gamma-\{1\}$ the isometry $\rho_{X,Y}(\gamma)$ has a unique pair of invariant spacelike lines: The {\em axis} $\ell\subset\mc{CH}_{X,Y}$ on which it acts by translations by $L_\rho(\gamma)=(L_X(\gamma)+L_Y(\gamma))/2$ and the dual axis $\ell^*\subset\mb{H}^{2,1}-\Omega_{X,Y}$ on which it acts by translations by $\theta_\rho(\gamma)=(L_X(\gamma)-L_Y(\gamma))/2$. 

We prove:

\begin{thm*}
\label{thm:length}
Let $\rho_{X,Y}$ be a Mess representation and let $\lambda$ be a maximal lamination. Denote by $Z_\lambda\in\T$ the intrinsic hyperbolic structure on ${\hat S}_\lambda/\rho_{X,Y}(\Gamma)$ where ${\hat S}_\lambda\subset\mc{CH}_{X,Y}$ is the pleated set associated with $\lambda$, and consider $\gamma\in\Gamma-\{1\}$ a non-trivial element whose image $\rho_{X,Y}(\gamma)$ has axis $\ell\subset\mc{CH}_{X,Y}$.
\begin{enumerate}[(a)]
\item{Let $\delta$ be the maximal timelike distance of $\ell$ from ${\hat S}_\lambda$. Then:
\[
\cosh(L_{Z_\lambda}(\gamma))\le\cos(\delta)^2\cosh(L_\rho(\gamma))+\sin(\delta)^2\cosh(\theta_\rho(\gamma)).
\]
}
\item{Let $\delta^\pm$ be the maximal timelike distance of $\ell$ from $\lambda^\pm$. Then:
\[
\cosh(i(\lambda^\pm,\gamma))\le\sin(\delta^\pm)^2\cosh(L_\rho(\gamma))+\cos(\delta^\pm)^2\cosh(\theta_\rho(\gamma)).
\]
Here $i(\bullet,\bullet)$ is the {\rm geometric intersection} form.}
\end{enumerate}
\end{thm*}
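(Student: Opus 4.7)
The plan is to realize each left-hand side as a translation length of $\gamma$ for a suitable (pseudo-)metric on a pleated set, bound it by an AdS-displacement via a $1$-Lipschitz map from the structural theorem, and evaluate that displacement via explicit trigonometry in $\mb{H}^{2,1}$ around $\ell$. The main calculation, common to both parts, is the identity: if $q^\sharp \in \mb{H}^{2,1}$ lies on the timelike geodesic perpendicular to $\ell$ at a point $p \in \ell$ at timelike distance $\delta \in [0, \pi/2)$, then
\[
\cosh d_{\mb{H}^{2,1}}\!\bigl(q^\sharp,\, \rho_{X,Y}(\gamma)\, q^\sharp\bigr) = \cos^2(\delta)\, \cosh L_\rho(\gamma) + \sin^2(\delta)\, \cosh \theta_\rho(\gamma).
\]
Writing $\rho_{X,Y}(\gamma)$ in the ${\rm PSL}_2(\mb{R})$-model as the commuting product of translation by $L_\rho(\gamma)$ along $\ell$ and translation by $\theta_\rho(\gamma)$ along the dual axis $\ell^*$, this reduces to computing $\mathrm{tr}\bigl((q^\sharp)^{-1}\rho_{X,Y}(\gamma)q^\sharp\bigr)/2$; the extremes $\delta = 0$ (so $q^\sharp \in \ell$) and $\delta \to \pi/2$ (so $q^\sharp \to \ell^*$) recover $\cosh L_\rho(\gamma)$ and $\cosh\theta_\rho(\gamma)$.

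\textbf{Part (a).} Pick $p \in \ell$ realizing $\delta = \max_{p\in\ell} d_{\mathrm{tl}}(p,{\hat S}_\lambda)$, and let $q^\sharp \in {\hat S}_\lambda$ be the foot of the corresponding timelike perpendicular; maximality forces this to be a true perpendicular projection, so the identity applies with parameter $\delta$. Combining with the $(\rho_{X,Y},\rho_\lambda)$-equivariant $1$-Lipschitz map ${\hat f}:{\hat S}_\lambda \to \mb{H}^2$ of the structural theorem,
\[
L_{Z_\lambda}(\gamma) \le d_{Z_\lambda}\!\bigl({\hat f}(q^\sharp),\rho_\lambda(\gamma){\hat f}(q^\sharp)\bigr) \le d_{\mb{H}^{2,1}}\!\bigl(q^\sharp,\rho_{X,Y}(\gamma)q^\sharp\bigr),
\]
and applying $\cosh$ yields the inequality.

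\textbf{Part (b).} The right-hand side of (b) is the trigonometric identity with $L_\rho$ and $\theta_\rho$---equivalently $\ell$ and $\ell^*$---swapped. The plan is to mirror the argument of (a) in the dual setting, replacing the intrinsic hyperbolic metric of ${\hat S}_{\lambda^\pm}$ by the bending pseudo-distance: by Mess, the bending cocycle of ${\hat S}_{\lambda^\pm}$ is precisely $\lambda^\pm$, which equips ${\hat S}_{\lambda^\pm}$ with a transverse pseudo-metric whose translation length for $\gamma$ equals $i(\lambda^\pm,\gamma)$. A dual $1$-Lipschitz comparison---estimating this pseudo-distance at a point $q^\sharp\in{\hat \lambda}^\pm$ that realizes the maximum timelike distance $\delta^\pm$ of $\ell$ from $\lambda^\pm$---combined with the dual trigonometric identity then yields the stated bound.

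\textbf{Main obstacle.} The trigonometric identity, once set up in the ${\rm PSL}_2(\mb{R})$-model, is a short matrix computation. The substantive work concentrates in part (b): establishing the dual $1$-Lipschitz comparison between the AdS data around $\ell$ and the transverse pseudo-distance induced by $\lambda^\pm$ on ${\hat S}_{\lambda^\pm}$, which is the genuine bending-theoretic analog of the map ${\hat f}$ of part (a) and requires unpacking how the bending of $\partial^\pm\mc{CH}_{X,Y}$ is distributed along the geometric realization ${\hat \lambda}^\pm$.
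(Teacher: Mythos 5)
Your part (a) is correct and follows the paper's argument essentially verbatim: take the point of $\ell$ realizing the maximal timelike distance $\delta$, note the realizing segment is orthogonal to $\ell$, apply the orthogonal-displacement identity (the paper's Lemma \ref{lem:move orthogonally}, which you propose to verify by a matrix computation --- same content), and finish with the equivariant $1$-Lipschitz developing map. No issues there.

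Part (b), however, contains a genuine gap that you yourself flag as the ``main obstacle'' but do not resolve, and it is precisely where the paper's work lies. Two points. First, the point at which the swapped trigonometric identity must be evaluated is not a point $q^\sharp\in{\hat\lambda}^\pm$ at timelike distance $\delta^\pm$ from $\ell$ (that would give the \emph{unswapped} combination $\cos^2(\delta^\pm)\cosh L_\rho+\sin^2(\delta^\pm)\cosh\theta_\rho$); it is the \emph{dual point} $v^\pm$ of the supporting plane of $\partial^\pm\mc{CH}_{X,Y}$ at $x^\pm$, which sits at timelike distance $\pi/2-\delta^\pm$ from $\ell$ along the same timelike geodesic, on $\partial\Omega_{X,Y}$ --- hence the $\sin\leftrightarrow\cos$ swap. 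Second, the ``dual $1$-Lipschitz comparison'' relating $i(\lambda^\pm,\gamma)$ to $d_{\mb{H}^{2,1}}(v^\pm,\rho_{X,Y}(\gamma)v^\pm)$ is not obtained by unpacking the bending pseudo-metric on ${\hat S}_{\lambda^\pm}$; the paper instead passes to the \emph{initial and terminal singularities} $\mc{S}^\pm$ (the sets of dual points of supporting planes), invokes the Benedetti--Guadagnini result that $\mc{S}^\pm$ is an $\mb{R}$-tree whose minimal displacement under $\rho_{X,Y}(\gamma)$ equals $i(\gamma,\lambda^\pm)$, and then proves $d_{\mc{S}^\pm}(v,w)\le d_{\mb{H}^{2,1}}(v,w)$. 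That last inequality is itself nontrivial: it rests on a \emph{reverse} triangle inequality for collinearly ordered dual points of supporting planes, fed into a dyadic Riemann-sum approximation of the path length in $\mc{S}^\pm$. None of this is supplied or sketched in your proposal, so as written part (b) reduces to restating the desired conclusion modulo an unproved comparison. To complete the argument along your intended lines you would need to either reconstruct the $\mc{S}^\pm$ machinery or give an independent proof that the transverse bending measure of $\gamma$ is dominated by the chordal $\mb{H}^{2,1}$-displacement of the dual point.
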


When combined, the previous results (Theorem \ref{thm:pleated surfaces ads} and Theorem \ref{thm:length}) give a purely anti de Sitter proof of (strict) convexity of length functions in shear coordinates, recovering simultaneously results of Bestvina, Bromberg, Fujiwara, and Souto \cite{BBFS13}, and Théret \cite{The14}:

\begin{thm*}
\label{thm:length convex}
Let $\lambda\subset\Sigma$ be a maximal lamination. The following holds:
\begin{enumerate}[(a)]
\item{Let $\gamma\in\Gamma-\{1\}$ be a non-trivial loop. The length function $L_\gamma:\T\subset\mc{H}(\lambda;\mb{R})\to(0,\infty)$ is convex. Moreover, convexity is strict if $\gamma$ intersects essentially every leaf of $\lambda$.}
\item{Let $\gamma\in\mc{ML}$ be a measured lamination. The length function $L_\gamma:\T\subset\mc{H}(\lambda;\mb{R})\to(0,\infty)$ is convex. Furthermore, convexity is strict if the support of $\gamma$ intersects transversely each leaf of $\lambda$.}
\end{enumerate}
\end{thm*}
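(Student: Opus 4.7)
The plan is to reduce both statements to midpoint convexity in shear coordinates and then upgrade to full convexity by continuity. Fix $X,Y \in \T$, set $\rho = \rho_{X,Y}$, and let $Z_\lambda \in \T$ be the intrinsic hyperbolic structure associated to the pleated surface $\hat{S}_\lambda/\rho(\Gamma)$. Theorem \ref{thm:pleated surfaces ads}(1) identifies $\sigma_\lambda(Z_\lambda)$ with the midpoint $(\sigma_\lambda(X) + \sigma_\lambda(Y))/2$, so in shear coordinates $Z_\lambda$ is precisely halfway between $X$ and $Y$. The structural theorem of \cite{MV22a} quoted in the introduction then supplies
\[
L_{Z_\lambda}(\gamma) \le L_\rho(\gamma) = \frac{L_X(\gamma)+L_Y(\gamma)}{2},
\]
which is exactly midpoint convexity of $L_\gamma$ in the coordinate chart $\sigma_\lambda$. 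Since $\sigma_\lambda(\T) \subset \mc{H}(\lambda;\mb{R})$ is open and convex and $L_\gamma$ is continuous, a standard argument upgrades midpoint convexity to convexity on all of $\sigma_\lambda(\T)$.

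For the strict inequality in (a), note that when $\sigma_\lambda(X) \neq \sigma_\lambda(Y)$ Theorem \ref{thm:pleated surfaces ads}(2) forces the bending cocycle $\beta_\lambda$ to be non-trivial, so the bending locus of $\hat{S}_\lambda$ is a non-empty sublamination of $\lambda$. If $\gamma$ essentially intersects every leaf of $\lambda$, it must cross this bending locus, and the ``strict inequality if and only if'' clause of the cited structural theorem yields $L_{Z_\lambda}(\gamma) < L_\rho(\gamma)$, i.e.\ strict midpoint convexity. To promote this to strict convexity, I would denote by $X_t \in \T$ the point with shear coordinates $t\sigma_\lambda(X)+(1-t)\sigma_\lambda(Y)$ (so $X_{1/2}=Z_\lambda$) and consider the defect function $g(t) := tL_\gamma(X)+(1-t)L_\gamma(Y) - L_\gamma(X_t)$. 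By the convexity just established, $g$ is continuous and concave on $[0,1]$; it vanishes at the endpoints and satisfies $g(1/2)>0$, and concavity then forces $g>0$ on all of $(0,1)$, as required.

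For part (b) the strategy is the same, with measured laminations in place of loops. The non-strict inequality $L_\mu(Z_\lambda) \le (L_\mu(X)+L_\mu(Y))/2$ for $\mu \in \mc{ML}$ follows by approximating $\mu$ with weighted simple closed curves and passing (a) to the limit via continuity of $L_\bullet$ on $\mc{ML}$, yielding convexity. The main obstacle is strict convexity, since strict inequalities generally do not survive such limits. Here I would argue directly at the pleated-surface level: the $1$-Lipschitz map $\hat{f}:\hat{S}_\lambda \to \mb{H}^2$ of the cited structural theorem is totally geodesic on leaves and plaques but strictly shortens any arc that transversely crosses a leaf of the bending locus, so whenever the support of $\mu$ meets every leaf of $\lambda$ transversely the geodesic realization of $\mu$ on $Z_\lambda$ is strictly shorter than its lift to $\hat{S}_\lambda$, whose length equals $L_\rho(\mu)$. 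Strict midpoint convexity then promotes to strict convexity exactly as in (a).
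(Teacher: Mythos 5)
Your part (a) and the non-strict half of part (b) are essentially correct and follow the paper's route: midpoint convexity via $\sigma_\lambda(Z_\lambda)=(\sigma_\lambda(X)+\sigma_\lambda(Y))/2$ from Theorem \ref{thm:pleated surfaces ads}, strictness from a non-empty bending locus, and density of weighted simple closed curves for laminations. (The paper derives the strict gap in (a) from the quantitative estimate of Theorem \ref{thm:length}(a) with $\delta>0$ rather than from the qualitative ``strict iff $\gamma$ meets the bending locus'' clause of \cite{MV22a}, but that is a difference of emphasis, not of substance.)

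The genuine gap is in your argument for strict convexity in part (b). First, the object you invoke does not exist: under the hypothesis of (b) the support of $\mu$ is \emph{transverse} to $\lambda$, so its geodesic realization is not contained in $\hat{S}_\lambda$, and there is no ``lift of $\mu$ to $\hat{S}_\lambda$ whose length equals $L_\rho(\mu)$''; the quantity $L_\rho(\mu)$ is itself only defined as $\lim a_n L_\rho(\gamma_n)$ over approximating weighted curves. Second, the mechanism is wrong: the $1$-Lipschitz developing map is totally geodesic on each leaf and plaque, so it preserves the length of a piecewise-geodesic arc in $\hat{S}_\lambda$ crossing the bending locus; the deficit comes from comparing \emph{displacements} (via the timelike distance $\delta$ and Lemma \ref{lem:move orthogonally}), not from the map contracting arc lengths. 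Third, and most importantly, even granting a strict inequality $L_{Z_\lambda}(\gamma_n)<L_\rho(\gamma_n)$ for each approximant, this is exactly the non-uniform statement that you correctly observe does not survive the limit $a_n\gamma_n\to\mu$; arguing ``at the pleated-surface level'' does not escape this, because both sides are still limits. What is needed, and what the paper's proof of Proposition \ref{pro:laminations} actually supplies through its Claims 1--4, is a \emph{uniform} gap: one fixes a box $K(\ep)$ of geodesics around a leaf of $\mu$ that passes at timelike distance $\ge\delta/2$ from a point of $\hat{S}_\lambda$, decomposes each $\gamma_n$ as $\alpha_{n,m_n}\cdots\alpha_{n,1}$ with each axis returning to $K(\ep)$, shows via Lemma \ref{lem:analysis} that each factor contributes a deficit at least $\kappa>0$ independent of $n$, and concludes $L_{Z_\lambda}(\gamma_n)\le L_\rho(\gamma_n)-m_n\kappa/2$ with $a_n m_n\to\mu(K(\ep))>0$. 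Without an argument of this quantitative type, strict convexity for measured laminations is not established.
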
  

Note that (b) does not imply (a): In (a) the loop $\gamma$ does not necessarily represent a simple curve.

In the case of {\em earthquakes}, the geometry of Mess $3$-manifolds allows us to get the following infinitesimal geometric bound. We should mention that these bounds can also be deduced from work of Kerckhoff \cite{K83} and Wolpert \cite{W86} respectively.

\begin{thm*}
\label{thm:earthquakes}
Let $\lambda\in\mc{ML}$ be a measured lamination. Let $E_\lambda:[a,b]\to\T$ be an earthquake path driven by $\lambda$. Set $L_{\gamma}(t):=\ell_\gamma(E_\lambda(t))$. Then: For every $\gamma\in\Gamma-\{1\}$ we have: 
\[
{\ddot L_\gamma}\ge\frac{1}{\sinh(L_\gamma)}\left|{\dot L}_\gamma\right|\left(i(\gamma,\lambda)-\left|{\dot L}_\gamma\right|\right).
\]
\end{thm*}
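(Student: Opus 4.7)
The plan is to realise an infinitesimal earthquake as a family of Mess $3$-manifolds and extract the second-variation bound by applying Theorem \ref{thm:length} on a nearby Mess manifold and Taylor expanding in the earthquake parameter. Fix $t\in(a,b)$. For small $s>0$ with $t\pm s\in[a,b]$, set $X:=E_\lambda(t+s)$, $Y:=E_\lambda(t-s)$, and let $\rho=\rho_{X,Y}$ be the associated Mess representation. Choose a maximal lamination $\hat\lambda\subset\Sigma$ containing the support of $\lambda$. Since $\sigma_{\hat\lambda}(X)-\sigma_{\hat\lambda}(Y)=2s\lambda$, Theorem \ref{thm:pleated surfaces ads} identifies the intrinsic hyperbolic structure of the pleated surface $S_{\hat\lambda}\subset\mc{CH}_{X,Y}$ with $Z_{\hat\lambda}=E_\lambda(t)$ and its bending cocycle with $s\lambda$. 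Combined with Mess' parametrisation, this forces the bending laminations $\lambda^\pm$ of $\partial^\pm\mc{CH}_{X,Y}$ to coincide with $s\lambda$, and in particular to have support contained in $\hat\lambda$, so that $i(\lambda^+,\gamma)=s\,i(\lambda,\gamma)$. In the notation of Theorem \ref{thm:length}, one also has $L_{Z_{\hat\lambda}}(\gamma)=L_\gamma(t)$, $L_\rho(\gamma)=\tfrac{1}{2}(L_\gamma(t+s)+L_\gamma(t-s))$, and $\theta_\rho(\gamma)=\tfrac{1}{2}(L_\gamma(t+s)-L_\gamma(t-s))$.

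\noindent Let $\ell\subset\mc{CH}_{X,Y}$ be the axis of $\rho(\gamma)$, and denote by $\delta$, $\delta^+$ the maximal timelike distances from $\ell$ to $\hat S_{\hat\lambda}$ and to the geometric realisation of $\lambda^+$, respectively. Since the realisation of $\lambda^+$ sits inside $\hat S_{\hat\lambda}$, the farthest-point interpretation of \emph{maximal timelike distance} gives $\delta^+\le\delta$, whence $\sin^2(\delta^+)\le\sin^2(\delta)$. Rearranging parts (a) and (b) of Theorem \ref{thm:length}, using $L_\rho>|\theta_\rho|$ and hence $\cosh L_\rho>\cosh\theta_\rho$, yields
\[
\frac{\cosh(s\,i(\lambda,\gamma))-\cosh\theta_\rho(\gamma)}{\cosh L_\rho(\gamma)-\cosh\theta_\rho(\gamma)}\le\sin^2(\delta^+)\le\sin^2(\delta)\le\frac{\cosh L_\rho(\gamma)-\cosh L_\gamma(t)}{\cosh L_\rho(\gamma)-\cosh\theta_\rho(\gamma)},
\]
which rearranges to the clean combined inequality
\[
\cosh L_\gamma(t)+\cosh(s\,i(\lambda,\gamma))\le \cosh L_\rho(\gamma)+\cosh\theta_\rho(\gamma).
\]

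\noindent To conclude, Taylor expand $L_\gamma(t\pm s)=L_\gamma(t)\pm s\dot L_\gamma(t)+\tfrac{s^2}{2}\ddot L_\gamma(t)+O(s^3)$ to obtain $L_\rho(\gamma)=L_\gamma(t)+\tfrac{s^2}{2}\ddot L_\gamma(t)+O(s^4)$ and $\theta_\rho(\gamma)=s\dot L_\gamma(t)+O(s^3)$, and match the coefficients of $s^2$ in the combined inequality to produce
\[
i(\lambda,\gamma)^2\le \sinh(L_\gamma(t))\,\ddot L_\gamma(t)+\dot L_\gamma(t)^{2}.
\]
Together with the classical bound $|\dot L_\gamma|\le i(\gamma,\lambda)$ (immediate from the Lipschitz estimate $|L_\gamma(t+s)-L_\gamma(t-s)|\le 2s\,i(\gamma,\lambda)$ for an earthquake of time $2s$ along $\lambda$), the factorisation $i^2-\dot L_\gamma^{\,2}=(i-|\dot L_\gamma|)(i+|\dot L_\gamma|)\ge|\dot L_\gamma|\,(i-|\dot L_\gamma|)$ delivers the claimed estimate. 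The main obstacles are (i) identifying the bending laminations of $\partial^\pm\mc{CH}_{X,Y}$ in this earthquake setup as exactly $s\lambda$, which follows from Mess' parametrisation combined with the shear-bend description of Theorem \ref{thm:pleated surfaces ads}, and (ii) carefully justifying the monotonicity $\delta^+\le\delta$ under the precise AdS-geometric meaning of ``maximal timelike distance'' employed in Theorem \ref{thm:length}.
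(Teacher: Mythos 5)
Your proposal is correct and follows essentially the same route as the paper: realize the infinitesimal earthquake via the Mess representations $\rho_{E_\lambda(t+s),E_\lambda(t-s)}$, use Theorem \ref{thm:pleated surfaces ads} to identify the intrinsic structure with $E_\lambda(t)$ and the bending lamination with $s\lambda$, add parts (a) and (b) of Theorem \ref{thm:length} to get $\cosh(s\,i(\lambda,\gamma))-\cosh\theta_\rho\le\cosh L_\rho-\cosh L_{E_\lambda(t)}$, and extract the second-order term (the paper via the mean value theorem, you via an equivalent Taylor expansion). Your explicit justification that the two distances satisfy $\delta^+\le\delta$ before summing is a point the paper glosses over, and is handled correctly.
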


Let us point out that, by Kerckhoff's formula for the first variation \cite{K83}, we always have $|{\dot L}_\gamma|\le i(\gamma,\lambda)$ with strict inequality if $\gamma$ intersects $\lambda$ essentially.

\subsection*{Anti de Sitter proofs}
We now briefly discuss the main new ideas and ingredients that go into the anti de Sitter proofs.

\subsubsection*{Theorem \ref{thm:length}}
The idea is that as we move a closed geodesic $\gamma\subset M_{X,Y}$ orthogonally along timelike directions, the length shrinks. Heuristically speaking: Every closed geodesic $\gamma\subset M_{X,Y}$ is the core of an (immersed) anti de Sitter annulus $A_\gamma\subset M_{X,Y}$ whose intrinsic metric has the form ${\rm d}s^2=-{\rm d}t^2+\sin(t)^2{\rm d}\ell^2$. Hence, the length of $\gamma(s)=(0,s)$ (in $(t,\ell)$ coordinates) contracts as we move it away from the core $\{t=0\}$ along orthogonal timelike directions. In the proof of the theorem we make some aspects of this picture precise. In particular, we understand how various avatars of $A_\gamma$ intersect the pleated surfaces ${\hat S}_\lambda/\rho_{X,Y}(\Gamma)$ and $\partial^\pm\mc{CC}(M_{X,Y})=\partial^\pm\mc{CH}_{X,Y}/\rho_{X,Y}(\Gamma)$.

\subsubsection*{Theorem \ref{thm:length convex}} 
(Strict) convexity is equivalent to the (strict) inequality in
\[
L_\gamma\left( Z_\lambda \right)\le\frac{L_\gamma(X)+L_\gamma(Y)}{2}
\]
for every $X \neq Y \in \T$, where $\sigma_\lambda(Z_\lambda) = \frac{\sigma_\lambda(X) + \sigma_\lambda(Y)}{2} \in \mathcal{H}(\lambda;\mathbb{R})$. We note that the right hand side is $L_\rho(\gamma)$ for $\rho_{X,Y}$ and the left hand side is, by Theorem \ref{thm:pleated surfaces ads}, $L_{Z_\lambda}(\gamma)$ where $Z_\lambda$ is the hyperbolic structure on the pleated surface ${\hat S}_\lambda/\rho(\gamma)$ associated with $\lambda$ and $\rho$. The inequality is then a consequence of part (a) of Theorem \ref{thm:length}. The inequality is not strict exactly when the maximal distance $\delta$ between $\ell$ and $\hat{S}_\lambda$ vanishes or, in other words, when $\ell\subset{\hat S}_\lambda$. This is possible if and only if $\gamma$ does not intersect the {\em bending locus}. 

The proof for laminations requires a significantly more refined argument based on the following heuristic principle: Every time $\ell$ passes at timelike distance $\delta>0$ from ${\hat S}_\lambda$ it creates a gap of size $\kappa>0$ between $L_{Z_\lambda}(\gamma)$ and $L_\rho(\gamma)$.

\subsubsection*{Theorem \ref{thm:earthquakes}} 
The idea is to analyze the geometry of the representations $\rho_t:=\rho_{Z_{-t},Z_t}$ where $Z_t=E^l_\lambda(t)$ as $t\to 0$. Notice that,  by Theorem \ref{thm:pleated surfaces ads}, the bending lamination on $\partial^+\mc{CH}_{Z_{-t},Z_t}$ is $\lambda_t^+=t\lambda$ and its associated hyperbolic structure is constant $Z_t^+=Z$. The main tool is again Theorem \ref{thm:length}: Combining the inequalities of part (a) and part (b) of Theorem \ref{thm:length} we have
\[
\cosh(t\cdot i(\lambda^+,\gamma))-\cosh(\theta_{\rho_t}(\gamma))\le\cosh(L_{\rho_t}(\gamma))-\cosh(L_Z(\gamma)).
\]
The conclusion follows from basic analysis, essentially the mean value theorem $\cosh(x)-\cosh(y)=\sinh(\xi)(x-y)$ where $\xi\in[x,y]$ and the fact that $(f(-t)+f(t)-2f(0))/t^2\to{\ddot f}$ which we apply to $(L_{\rho_t}(\gamma)-L_Z(\gamma))/t^2=(L_{Z_{-t}}(\gamma)+L_{Z_t}(\gamma)-2L_Z(\gamma))/2t^2\to{\ddot L}_\gamma/2$.

\subsection*{Shear-bend parametrization}
As an application of our computations on the intrinsic hyperbolic structure and intrinsic bending of a non-convex pleated surface, we also obtain a shear-bend parametrization of the space of Mess 3-manifolds in the spirit of Bonahon's work \cite{Bo96}: Consider the space of Mess representations
\[
\mc{MR}:=\T\times\T\subset{\rm Hom}(\Gamma,{\rm PSL}_2(\mb{B}))/{\rm PSL}_2(\mb{B}),
\]
where $\mb{B}:=\mb{R}[\tau]/(\tau^2-1)=\mb{R}\oplus\tau\mb{R}$ denotes the ring of {\em para-complex numbers}. Let $\mc{H}(\lambda;\mb{B})$ be the finite dimensional $\mb{B}$-module of {\em transverse cocycles} for $\lambda$ with values in $\mb{B}$ as introduced by Bonahon \cite{Bo96}. Notice that there are natural identifications ${\rm PSL}_2(\mb{B})={\rm PSL}_2(\mb{R})\times{\rm PSL}_2(\mb{R})$ and $\mc{H}(\lambda;\mb{B})=\mc{H}(\lambda;\mb{R})\oplus\tau\mc{H}(\lambda;\mb{R})$. We have:

\begin{thm*}
\label{thm:shear-bend ads}
 Let $\lambda\subset\Sigma$ be a maximal lamination. Then:
\begin{enumerate}
\item{The map
\[
\begin{array}{c}
\Phi:\mc{MR}\rightarrow\mc{H}(\lambda;\mb{B})\\
\rho\rightarrow\sigma_\rho:=Z_\lambda+\tau\beta_\lambda
\end{array}
\]
that associates to $\rho$ the {\rm shear-bend cocycle} of the unique pleated surface $S_\lambda={\hat S}_\lambda/\rho(\Gamma)$ associated with $\lambda$ is an analytic para-complex embedding.}
\item{If $\omega_{{\rm Th}}(\bullet,\bullet)$ denotes the {\rm Thurston's symplectic form} on $\mc{H}(\lambda;\mb{B})$, then
\[
\omega_{{\rm Th}}^\mb{B}(\sigma_\rho,\alpha)=L_\rho(\alpha)+\tau \, \theta_\rho(\alpha)
\]
for every measured lamination $\alpha\in\mc{ML}_\lambda=\{\alpha\in\mc{ML}\left|\text{ \rm support}(\alpha)\subset\lambda\right.\}$ and every $\rho\in\mc{MR}$.}
\item{The image of the embedding is given by 
\begin{align*}
\Phi(\mc{MR}) &=\left\{\sigma+\tau\beta\in\mc{H}(\lambda;\mb{B})\left|\;\sigma+\beta,\sigma-\beta\in\T\subset\mc{H}(\lambda;\mb{R})\right.\right\}\\
&=\left\{\sigma+\tau\beta\in\mc{H}(\lambda;\mb{B})\left|\;|\omega_{{\rm Th}}^\mb{B}(\sigma+\tau\beta,\bullet)|_{\mb{B}}^2>0\text{ \rm on }\mc{ML}_\lambda\right.\right\}.
\end{align*}
Here $|x+\tau y|_\mb{B}^2=x^2-y^2$ is the para-complex norm.} 
\item{The pull-back of $\omega_{{\rm Th}}$ to $\mc{MR}=\T\times\T$ coincides with
\[
\Phi^*\omega_{{\rm Th}}=c\cdot(\omega_{{\rm WP}}\oplus \omega_{{\rm WP}})
\]
where $\omega_{{\rm WP}}(\bullet,\bullet)$ is the Weil-Petersson symplectic form.}
\end{enumerate}
\end{thm*}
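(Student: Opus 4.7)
The plan is to reduce every claim to Theorem \ref{thm:pleated surfaces ads} combined with Bonahon--Thurston--Sözen's classical shear theory, by systematically exploiting the para-complex idempotents $e_\pm := (1 \pm \tau)/2 \in \mb{B}$. These satisfy $e_+^2 = e_+$, $e_-^2 = e_-$, $e_+ e_- = 0$, and give a splitting $\mc{H}(\lambda;\mb{B}) = e_+ \mc{H}(\lambda;\mb{R}) \oplus e_- \mc{H}(\lambda;\mb{R})$. The first step is to substitute the formulas of Theorem \ref{thm:pleated surfaces ads} into $\sigma_\rho = Z_\lambda + \tau \beta_\lambda$ to obtain the master identity
\[
\Phi(\rho_{X,Y}) = e_+\, \sigma_\lambda(X) + e_-\, \sigma_\lambda(Y),
\]
which exhibits $\Phi$ as the direct sum of two copies of Bonahon's shear embedding $\sigma_\lambda: \T \hookrightarrow \mc{H}(\lambda;\mb{R})$. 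From this identity every remaining claim unfolds.

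For part (1), I would deduce the real-analytic embedding property from $\sigma_\lambda$ being itself a real-analytic embedding onto an open convex cone. Para-complex linearity of $d\Phi$ is then a matter of tracking the $\tau$-action: under the $e_\pm$ splitting, $\tau$ acts as $+1$ on the $e_+$ summand and $-1$ on the $e_-$ summand, which matches the natural $\mb{B}$-structure on $\mc{MR}$ coming from $\mathrm{PSL}_2(\mb{B}) = \mathrm{PSL}_2(\mb{R}) \times \mathrm{PSL}_2(\mb{R})$. For part (2), I would use that the $\mb{B}$-bilinear extension of $\omega_{\mathrm{Th}}$ decomposes along $e_\pm$, so by Bonahon's length-cocycle duality $\omega_{\mathrm{Th}}(\sigma_\lambda(X), \alpha) = L_X(\alpha)$ one gets $\omega_{\mathrm{Th}}^\mb{B}(\sigma_\rho, \alpha) = e_+ L_X(\alpha) + e_- L_Y(\alpha)$, which rewrites as $L_\rho(\alpha) + \tau\, \theta_\rho(\alpha)$.

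For the first description in part (3), the substitutions $\sigma + \beta = \sigma_\lambda(X)$ and $\sigma - \beta = \sigma_\lambda(Y)$ make the image description tautological. For the norm description I would invoke part (2) together with the para-complex norm identity
\[
|L_\rho(\alpha) + \tau\,\theta_\rho(\alpha)|_\mb{B}^2 = L_\rho(\alpha)^2 - \theta_\rho(\alpha)^2 = L_X(\alpha)\, L_Y(\alpha);
\]
positivity of this product on every nonzero $\alpha \in \mc{ML}_\lambda$ is equivalent by Thurston's cone theorem to both $X$ and $Y$ lying in $\T$. Finally, part (4) would come from pulling back the $e_\pm$ decomposition of $\omega_{\mathrm{Th}}^\mb{B}$ through $\Phi = \sigma_\lambda \oplus \sigma_\lambda$ and invoking Sözen--Bonahon's theorem $\sigma_\lambda^* \omega_{\mathrm{Th}} = c\, \omega_{\mathrm{WP}}$ on each factor separately.

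The main obstacle is not in any of these steps but sits inside the assumed Theorem \ref{thm:pleated surfaces ads}, which supplies the crucial affine formulas for $Z_\lambda$ and $\beta_\lambda$. The one genuinely delicate point within the plan itself is part (1): verifying that the para-complex structure induced on $\mc{MR}$ by the shear-bend parametrization agrees with the one coming from the identification $\T \times \T \subset \mathrm{Hom}(\Gamma, \mathrm{PSL}_2(\mb{B}))/\mathrm{PSL}_2(\mb{B})$. This requires tracing how the $\mb{B}$-linear structure on $\mc{H}(\lambda;\mb{B})$ interacts with the conjugation action $(A,B) \cdot X = A X B^{-1}$ that defines the Mess geometry, so that the two $\mb{B}$-analytic structures genuinely coincide.
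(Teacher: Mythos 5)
Your proposal is correct and follows essentially the same route as the paper: both rest on the idempotent splitting $\mc{H}(\lambda;\mb{B})=\frac{1+\tau}{2}\mc{H}(\lambda;\mb{R})\oplus\frac{1-\tau}{2}\mc{H}(\lambda;\mb{R})$, the identity $\Phi(\rho_{X,Y})=\frac{1+\tau}{2}\sigma_\lambda^X\oplus\frac{1-\tau}{2}\sigma_\lambda^Y$ obtained from Theorem \ref{thm:pleated surfaces ads}, and the reduction of each part to Bonahon's Theorems \ref{thm:thurston bonahon} and \ref{thm:symplectic length} and Bonahon--Sözen, including the explicit check that $\mathrm{d}\Phi$ intertwines $\mb{J}(u,v)=(u,-v)$ with multiplication by $\tau$. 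The delicate point you flag in part (1) is resolved in the paper exactly as you suggest, by tracking the $\tau$-action on the two summands.
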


\subsection*{Structure of the article}
The paper is organized as follows:
\begin{itemize}
\item{In Section \ref{sec:sec3} we recall some basic facts in Teichmüller theory and anti de Sitter 3-dimensional geometry.}
\item{In Section \ref{sec:sec7} we introduce Mess representations and pleated surfaces and recall some of their properties. }
\item{In Section \ref{sec:sec4} we compute the intrinsic shear-bend cocycles of pleated surfaces and prove Theorems \ref{thm:pleated surfaces ads} and \ref{thm:shear-bend ads}. }
\item{In Section \ref{sec:sec5} we study the behavior of length functions for Mess representations and prove Theorem \ref{thm:length}. }
\item{In Section \ref{sec:sec6} we discuss the purely anti de Sitter proofs of Theorems \ref{thm:length convex} and \ref{thm:earthquakes}.}
\end{itemize}

\subsection*{Acknowledgements}
We thank Sara Maloni, Beatrice Pozzetti, and Andrea Seppi for useful discussions and generous feedback on the article.

Gabriele gratefully acknowledges the support of the DFG 427903332.

\section{Teichmüller and anti de Sitter space}
\label{sec:sec3}

In this section we recall the amount of basic Teichmüller theory and anti de Sitter 3-dimensional geometry that we will need in the next sections.

\subsection{Teichmüller theory}
We start with hyperbolic surfaces and (measured) geodesic laminations. 

\subsubsection{Hyperbolic surfaces}
We fix once and for all a closed oriented surface $\Sigma$ of genus $g\ge 2$ and denote by $\Gamma:=\pi_1(\Sigma)$ its fundamental group.

\begin{dfn}[Hyperbolic Structures]
A {\em marked hyperbolic structure} on $\Sigma$ is a quotient $\mb{H}^2/\rho_X(\Gamma)$ of the hyperbolic plane $\mb{H}^2$ by the image of a faithful and discrete representation $\rho_X:\Gamma\to{\rm PSL}_2(\mb{R})$, the {\em holonomy} of the structure. Two marked hyperbolic structures $X,X'$ on $\Sigma$ are equivalent if their holonomies $\rho_X,\rho_{X'}$ are conjugate.
\end{dfn}

\begin{dfn}[Teichmüller Space]
The {\em Teichmüller space} of $\Sigma$, denoted by $\T$, is the space of equivalence classes of marked hyperbolic structures on $\Sigma$. It can be realized as a connected component of the space 
\[
\T\subset{\rm Hom}(\Gamma,{\rm PSL}_2(\mb{R}))/{\rm PSL}_2(\mb{R})
\]
where ${\rm PSL}_2(\mb{R})$ acts on the space of representations by conjugation.
\end{dfn}

\subsubsection{Geodesic laminations}
To study the geometry of hyperbolic surfaces it is quite useful to look at the behavior of their geodesic laminations which are 1-dimensional objects generalizing simple closed geodesics. 

\begin{dfn}[Space of Geodesics]
The space of (unoriented) geodesics on $\mb{H}^2$ is naturally identified with the set of pairs of endpoints
\[
\mc{G}:=\{(x,y)\in\mb{RP}^1\times\mb{RP}^1\left|x\neq y\right\}/(x,y)\sim(y,x)
\]
where $(x,y)$ corresponds to the line with endpoints $x,y$.
\end{dfn}

\begin{dfn}[Geodesic Lamination]
Let $X=\mb{H}^2/\rho_X(\Gamma)$ be a hyperbolic surface. A {\em geodesic lamination} on $X$ is a $\rho_X(\Gamma)$-invariant closed subset $\lambda\subset\mb{H}^2$ which can be expressed as a disjoint union of complete geodesics, the {\em leaves} of the lamination. The complementary regions $\mb{H}^2-\lambda$ are ideal polygons (with possibly infinitely many sides) and are called the {\em plaques} of $\lambda$. The geodesic lamination $\lambda$ is {\em maximal} if all its plaques are ideal triangles. A geodesic lamination on $X$ is completely determined by the endpoints on $\mb{RP}^1$ of the leaves which form a closed $\rho_X(\Gamma)$-invariant subset of $\mc{G}$. We denote by $\mc{GL}$ the space of geodesic laminations and by $\mc{GL}_m$ the subspace consisting of maximal ones.
\end{dfn}

For more details, we address the reader to Chapter I.4 of \cite{CEG}.

\subsubsection{Currents and measured laminations}
Both Teichmüller space and measured laminations can be seen inside the space of geodesic currents as introduced by Bonahon (see \cite{Bo88}). This framework is well-suited to study length functions thanks to presence of a natural geometric intersection form as we now explain.  

\begin{dfn}[Geodesic Current]
Let $X=\mb{H}^2/\rho_X(\Gamma)$ be a hyperbolic surface. A {\em geodesic current} on $X$ is a $\rho_X(\Gamma)$-invariant locally finite Borel measure on $\mc{G}$. We denote by $\mc{C}$ the space of geodesic currents. 
\end{dfn}

\begin{dfn}[Closed Geodesics]
A basic example of geodesic current is the one associated to a (free homotopy class) of a loop $\gamma\in\Gamma-\{1\}$. It is defined as $\delta_\gamma:=\sum_{[\alpha]\in\Gamma/\langle\gamma\rangle}{\delta_{\ell_\alpha}}$ where $\ell_\alpha$ is the axis of $\rho_X(\alpha)$ and $\delta_\ell$ is the Dirac mass on the point $\ell\in\mc{G}$.
\end{dfn}

\begin{dfn}[Geometric Intersection]
On $\mc{C}$ there is a natural {\em intersection form} $i(\bullet,\bullet)$ defined as follows: Let $\alpha,\beta\in\mc{C}$ be geodesic currents. Consider the space of intersecting geodesics $\mc{I}:=\{(\ell,\ell')\in\mc{G}\times\mc{G}\left|\ell\cap\ell'\neq\emptyset\right.\}$. The group $\rho_X(\Gamma)$ acts properly discontinuously and freely on $\mc{I}$. By invariance, the measure $\alpha\times\beta$ on $\mc{I}$ descends to a Borel measure on $\mc{I}/\rho_X(\Gamma)$. Define $i(\alpha,\beta):=\alpha\times\beta(\mc{I}/\rho_X(\Gamma))$. An crucial property of the geometric intersection form $i(\alpha,\beta)$ is that it is continuous in $\alpha,\beta$.
\end{dfn}

\begin{dfn}[Measured Lamination]
Let $X=\mb{H}^2/\rho_X(\Gamma)$ be a hyperbolic surface. A {\em measured lamination} on $X$ is a geodesic current $\lambda\in\mc{C}$ with $i(\lambda,\lambda)=0$. We denote by $\mc{ML}$ the space of measured laminations.

The {\em support} of a measured lamination ${\rm support}(\lambda)$ is a geodesic lamination (see \cite{Bo88}). We denote by $\mc{ML}_\lambda:=\{\mu\in\mc{ML}\left|\;{\rm support}(\mu)\subset\lambda\right.\}$ the space of measured laminations whose support is contained in $\lambda$.
\end{dfn}

\subsubsection{Length functions}
Every hyperbolic surface $X$ has a (marked) length spectrum $\{L_X(\gamma)\}_{\gamma\in\Gamma-\{1\}}$ given by the lengths of its closed geodesics. Conveniently, Bonahon \cite{Bo88} proves that the length function $L_X(\bullet)$ extends continuously to geodesic currents as follows:

\begin{dfn}[Liouville Current]
The {\em Liouville current} $\mc{L}$ on $\mc{G}$ is the ${\rm PSL}_2(\mb{R})$-invariant Borel measure on $\mc{G}$ defined by 
\[
\mc{L}([a,b]\times[c,d]):=\beta^\mb{R}(a,b,c,d).
\]
on boxes $[a,b]\times[c,d]$ with $[a,b]\cap[c,d]=\emptyset$ (these sets generate the Borel algebra of $\mc{G}$). The Liouville current has the property that 
\[
L_X(\gamma)=i(\mc{L},\delta_\gamma)
\]
for every $\gamma\in\Gamma$ (see \cite{Bo88}). Therefore, $i(\mc{L},\bullet)$ extends continuously the length function $L_X(\bullet)$ to the space of geodesic currents.
\end{dfn}

\subsection{The ${\rm PSL}_2(\mb{R})$ model of $\mb{H}^{2,1}$}
The second central object that we discuss is the anti de Sitter 3-space $\mb{H}^{2,1}$. We will mostly work in its linear and projective models which we now describe. For more details on the material we present here, we refer the reader to \cite{BS20}. 

The group ${\rm SL}_2(\mb{R})$ sits inside the vector space of $2\times 2$ matrices with real entries $M_2(\mb{R})$ as the hyperboloid of vectors of norm $-1$ for the quadratic form $\langle\bullet,\bullet\rangle$ of signature $(2,2)$ given by 
\[
4\langle X,Y\rangle:={\rm det}(X)+{\rm det}(Y)-{\rm det}(X+Y)=-{\rm tr}(XY^\star).
\]
where ${\tiny \left[
\begin{array}{c c}
a &b\\
c &d\\
\end{array}
\right]^\star
:=
\left[
\begin{array}{c c}
d &-b\\
-c &a\\
\end{array}
\right]}$. 

Note that for every $X\in{\rm SL}_2(\mb{R})$, the restriction of the quadratic form to $T_X{\rm SL}_2(\mb{R})=X^\perp$ has signature $(2,1)$ and, hence, induces a $(2,1)$-pseudo-Riemannian metric on ${\rm SL}_2(\mb{R})$ (experts will have recognized the Killing form of ${\rm SL}_2(\mb{R})$). The group ${\rm SL}_2(\mb{R})\times {\rm SL}_2(\mb{R})$ acts on $M_2(\mb{R})$ by left and right multiplications as $(A,B)\cdot X:=AXB^{-1}$ and the action is isometric with respect to $\langle\bullet,\bullet\rangle$. Passing to the projectivization, ${\rm PSL}_2(\mb{R})\subset\mb{P}(M_2(\mb{R}))$ we obtain the projective model of anti de Sitter 3-space $\mb{H}^{2,1}$.

\subsubsection{Boundary at infinity}
In this model, the boundary at infinity $\partial\mb{H}^{2,1}$ of $\mb{H}^{2,1}$ identifies with the topological boundary of ${\rm PSL}_2(\mb{R})$ in $\mb{P}(M_2(\mb{R}))$
\[
\partial{\rm PSL}_2(\mb{R})=\{[X]\in\mb{P}(M_2(\mb{R}))\left|\;{\rm det}(X)=0\right.\}.
\]

Observe that $\partial{\rm PSL}_2(\mb{R})$ consists of rank one matrices and can be naturally ${\rm PSL}_2(\mb{R})\times {\rm PSL}_2(\mb{R})$-equivariantly identified with $\mb{RP}^1\times\mb{RP}^1$ via the map
\[
\begin{array}{l c}
&\partial{\rm PSL}_2(\mb{R})\to\mb{RP}^1\times\mb{RP}^1\\
&[X]\to([{\rm Im}(X)],[{\rm Ker}(X)]).\\
\end{array}
\]

\subsubsection{Subspaces}
Totally geodesic subspaces in anti de Sitter 3-space $\mb{H}^{2,1}$ are of the form $\mb{P}(V)\cap{\rm PSL}_2(\mb{R})$ where $V\subset M_2(\mb{R})$ is a linear subspace intersecting ${\rm SL}_2(\mb{R})$. In particular we have
\begin{itemize}
\item{{\em timelike geodesics} isometric to $\mb{R}/\pi\mb{Z}\Leftrightarrow V$ 2-plane of signature $(0,2)$.}
\item{{\em spacelike geodesics} isometric to $\mb{R}\Leftrightarrow V$ 2-plane of signature $(1,1)$.}
\item{{\em spacelike planes} isometric to $\mb{H}^2\Leftrightarrow V$ 3-plane of signature $(2,1)$.}
\end{itemize} 

Two distinct points $x,y\in\mb{H}^{2,1}$ are joined by: 
\begin{itemize}
\item{A spacelike geodesic if and only if $|\langle x,y\rangle|>1$.}
\item{A timelike geodesic if and only if $|\langle x,y\rangle|<1$.}
\end{itemize}

The geodesic $\gamma(t)$ starting at $x\in\mb{H}^{2,1}$ with velocity $v\in T_x\mb{H}^{2,1}=x^\perp$ is parametrized by
\[
\gamma(t)=\left\{
\begin{array}{l l}
\cosh(t)x+\sinh(t)v &\text{\rm if $\langle v,v\rangle=1$},\\
\cos(t)x+\sin(t)v &\text{\rm if $\langle v,v\rangle=-1$}.\\
\end{array}
\right.
\]

\subsubsection{Acausal sets and pseudo-metrics}
The last concept that we need is the one of acausality:

\begin{dfn}[Acausal Set]
A subset $S\subset\mb{H}^{2,1}\cup\partial\mb{H}^{2,1}$ is {\em acausal} if for every $x,y\in S$ the geodesic $[x,y]$ is spacelike.
\end{dfn}

\begin{dfn}[Pseudo Metric]
On acausal subsets $S\subset\mb{H}^{2,1}$ we have a {\em pseudo-metric} $d_{\mb{H}^{2,1}}(\bullet,\bullet)$ defined as follows
\[
\cosh(d_{\mb{H}^{2,1}}(x,y))=|\langle x,y\rangle|.
\]
Notice that $d_{\mb{H}^{2,1}}$ does not satisfy the triangle inequality in general.
\end{dfn}


\section{Mess representations and pleated surfaces}
\label{sec:sec7}

The goal of the section is to describe Mess representations and the geometry of their pleated surfaces. In particular, at the end of the section, we discuss the structure of the boundary of the convex core associated with a Mess representation.

\subsection{Mess representations}
First of all we introduce the following class:

\begin{dfn}[Mess Representation]
Let $X,Y\in\T$ be hyperbolic structures. The {\em Mess representation} with parameters $X,Y$ is 
\[
\rho_{X,Y}:=(\rho_X,\rho_Y):\Gamma\to{\rm PSL}_2(\mb{R})\times{\rm PSL}_2(\mb{R})
\]
where $\rho_X,\rho_Y$ are the holonomy representations of $X,Y$.
\end{dfn}

\subsubsection{Boundary maps}
Every Mess representation $\rho_{X,Y}$ comes with a natural equivariant boundary map
\[
\xi:\partial\Gamma\to\partial\mb{H}^{2,1}
\]

It can be described explicitly as follows: Recall that $\partial{\rm PSL}_2(\mb{R})$ is naturally identified with $\mb{RP}^1\times\mb{RP}^1$. Let $h_X,h_Y:\partial\Gamma\to\mb{RP}^1$ be the unique $\rho_X,\rho_Y$-equivariant homeomorphism. The boundary map $\xi:\partial\Gamma\to\mb{RP}^1\times\mb{RP}^1$ is just $\xi=(h_X,h_Y)$. 

Its image $\xi(\partial\Gamma)=\Lambda_{X,Y}$ is the graph of the unique $(\rho_X-\rho_Y)$-equivariant homeomorphism $h_{X,Y}:\mb{RP}^1\to\mb{RP}^1$. 

Checking that $\Lambda_{X,Y}$ has the property that for every $a,b,c\in\mb{RP}^1$, the 3-space ${\rm Span}\{(a,h_{X,Y}(a)),(b,h_{X,Y}(b)),(c,h_{X,Y}(c))\}$ has signature $(2,1)$ is not difficult: Let us assume without loss of generality that $a<b<c$. As $h_{X,Y}$ is an orientation preserving homeomorphism, we have $h_{X,Y}(a)<h_{X,Y}(b)<h_{X,Y}(c)$. Hence, up to the action of ${\rm PSL}_2(\mb{R})\times{\rm PSL}_2(\mb{R})$, we can assume that $a,b,c=h_{X,Y}(a),h_{X,Y}(b),h_{X,Y}(c)=0,1,\infty$. Tracing back the identification with $\partial{\rm PSL}_2(\mb{R})$ we see that 
\[
{\tiny (0,0)=\left[
\begin{array}{c c}
0 &0\\
1 &0\\
\end{array}
\right],\; (1,1)=\left[
\begin{array}{c c}
1 &-1\\
1 &-1\\
\end{array}
\right],\; (\infty,\infty)=\left[
\begin{array}{c c}
0 &1\\
0 &0\\
\end{array}
\right]
}.
\]
The conclusion follows by an elementary computation.

\subsubsection{Domain of discontinuity}
From the boundary curve $\Lambda_{X,Y}\subset\partial\mb{H}^{2,1}$ one constructs a standard open domain: 
\[
\Omega_{X,Y}:=\{y\in\mb{H}^{2,1}\left|\;\text{$[x,y]$ spacelike $\forall x\in\Lambda_{X,Y}$}\right.\}
\]
It can also be described as a connected component of
\[
\mb{H}^{2,1}-\bigcup_{x\in\Lambda_{X,Y}}{\{\langle x,\bullet\rangle=0\}}
\]
which is a properly convex subset of $\mb{P}(M_2(\mb{R}))$ whose closure contains $\Lambda_{X,Y}$. In particular, it contains a natural closed $\rho_{X,Y}(\Gamma)$-invariant convex subset, namely the convex hull $\mc{CH}_{X,Y}$ of the limit set $\Lambda_{X,Y}$.

As $\Omega_{X,Y}$ does not contain closed timelike geodesics, it has a well defined timelike distance:

\begin{dfn}[Timelike Distance]
The {\em timelike distance} $\delta_{\mb{H}^{2,1}}(\bullet,\bullet)$ on $\Omega_{X,Y}$ is defined by
\[
\cos(\delta_{\mb{H}^{2,1}}(x,y)):=
\left\{
\begin{array}{l l}
|\langle x,y\rangle| &\text{ if $[x,y]$ is timelike}\\
1 &\text{ otherwise}.\\
\end{array}
\right.
\]
\end{dfn}

The group $\rho_{X,Y}(\Gamma)$ acts freely and properly discontinuosly on $\Omega_{X,Y}$ (see \cite{M07}). The quotient $M_{X,Y}:=\Omega_{X,Y}/\rho_{X,Y}(\Gamma)$ is the {\em Mess manifold} associated with $X,Y\in\T$. 

Let us mention the fact that $M_{X,Y}$ is a so-called {\em globally hyperbolic maximal Cauchy compact} anti de Sitter 3-manifold (GHMC). In particular, this means that $M_{X,Y}$ contains a closed spacelike surface $S$ homeomorphic to $\Sigma$ which intersects every inextensible timelike geodesic exactly once. From this property it is not difficult to deduce that $M_{X,Y}$ is diffeomorphic to $\Sigma\times\mb{R}$. Mess proves in \cite{M07} that, in fact, all GHMC manifolds $M$ where the Cauchy surface is homeomorphic to $\Sigma$ have the form $M=M_{X,Y}$ for some $X,Y\in\T$. 

\subsection{Laminations and pleated surfaces}
Mess representations are examples of {\em maximal representations} in ${\rm PSL}_2(\mb{R})\times{\rm PSL}_2(\mb{R})={\rm PSO}_0(2,2)$ as introduced in \cite{BIW10} (in fact, by a celebrated result of Goldman \cite{G80}, every maximal representation in ${\rm PSL}_2(\mb{R})\times{\rm PSL}_2(\mb{R})$ is a Mess representation). 

As a consequence, we can apply the results of \cite{MV22a} to our setting. In this section, we recall the pleated surface construction from \cite{MV22a} and describe some geometric properties of these objects.

\subsubsection{Pleated sets}

Let $\rho_{X,Y}$ be a Mess representation with boundary map $\xi:\partial\Gamma\to\Lambda_{X,Y}$.

\begin{dfn}[Geometric Realization]
Let $\lambda\in\mc{GL}$ be a lamination. The {\em geometric realization} of $\lambda$ for $\rho_{X,Y}$ is
\[
{\hat \lambda}:=\bigcup_{(a,b)\in\lambda}{[\xi(a),\xi(b)]}\subset\mc{CH}_{X,Y}
\]
where $(a,b)$ is the leaf of $\lambda$ with endpoints $a,b$ and $[\xi(a),\xi(b)]$ is the spacelike geodesic with endpoints $\xi(a),\xi(b)$.
\end{dfn}

\begin{dfn}[Pleated Set]
Let $\lambda\in\mc{GL}_m$ be a maximal lamination. The {\em pleated set} associated with $\lambda$ and $\rho_{X,Y}$ is
\[
{\hat S}_\lambda:={\hat \lambda}\cup\bigcup_{\Delta(a,b,c)\subset\mb{H}^2-\lambda}{\Delta(\xi(a),\xi(b),\xi(c))}\subset\mc{CH}_{X,Y}
\]
where $\Delta(a,b,c)$ is the plaque of $\lambda$ with vertices $a,b,c$ and $\Delta(\xi(a),\xi(b),\xi(c))$ is the ideal spacelike triangle with endpoints $\xi(a),\xi(b),\xi(c)$.
\end{dfn}

\begin{pro}[Proposition 3.7 of \cite{MV22a}] 
\label{pro:existence pleated sets}
The pleated set $\widehat{S}_\lambda\subset\mc{CH}_{X,Y}$ is a $\rho_{X,Y}(\Gamma)$-invariant topological Lipschitz acausal subsurface.
\end{pro}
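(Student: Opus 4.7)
The plan is to establish the four properties asserted in the statement — equivariance, acausality, Lipschitz regularity, and the topological subsurface structure — in an order where the latter three all rest on acausality. Equivariance is immediate: since the lamination $\lambda$, viewed as a $\Gamma$-invariant subset of unordered pairs in $\partial\Gamma$, is $\Gamma$-invariant and the boundary map $\xi\colon\partial\Gamma\to\Lambda_{X,Y}$ is $\rho_{X,Y}$-equivariant, both $\hat\lambda$ and the union of ideal spacelike triangles defining $\hat S_\lambda$ are manifestly preserved by $\rho_{X,Y}(\Gamma)$.

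The heart of the proof is acausality. For pairs of points in a common plaque or on a common leaf this is essentially the defining property: the excerpt already establishes that any three points $\xi(a),\xi(b),\xi(c)\in\Lambda_{X,Y}$ span a spacelike $(2,1)$-plane, so each ideal triangle $\Delta(\xi(a),\xi(b),\xi(c))$ is isometric to an ideal triangle in $\mb{H}^2$ and each leaf $[\xi(a),\xi(b)]$ is a spacelike geodesic. For two points $x,y$ lying in distinct plaques/leaves with endpoint sets $A,B\subset\partial\Gamma$, I would exploit the cyclic order on $\Lambda_{X,Y}$ inherited from the fact that this limit curve is the graph of an orientation-preserving homeomorphism: since the plaques/leaves are disjoint, there is a leaf of $\lambda$ with endpoints $p,q$ weakly separating $A$ from $B$. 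Normalizing via the $\mathrm{PSL}_2(\mb{R})\times\mathrm{PSL}_2(\mb{R})$-action and the explicit identification $\partial\mathrm{PSL}_2(\mb{R})\cong\mb{RP}^1\times\mb{RP}^1$, so that three chosen boundary points in cyclic order become $(0,0),(1,1),(\infty,\infty)$ as displayed in the excerpt, the inner product $\langle x,y\rangle=-\tfrac12\mathrm{tr}(xy^\star)$ can be written down explicitly on convex parametrizations of the two ideal triangles, and the cyclic order forces $|\langle x,y\rangle|>1$, i.e.\ $[x,y]$ is spacelike.

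Once acausality is established, the Lipschitz and subsurface statements follow from a standard projection argument. Fix a point $o\in\hat S_\lambda$ and a short timelike geodesic arc $\tau$ through $o$; by acausality and continuity, the translates of $\tau$ along a small spacelike plane $\Pi$ transverse to $\tau$ foliate a neighborhood of $o$ in $\Omega_{X,Y}$ by timelike arcs, and $\hat S_\lambda$ is locally a graph over a disk in $\Pi$. The piecewise totally geodesic nature of $\hat S_\lambda$ (spacelike triangles glued along spacelike leaves) together with acausality gives a uniform angle bound between the tangent directions of $\hat S_\lambda$ and $\tau$, so the local graph is Lipschitz; assembling the charts globally, $\hat S_\lambda$ is a topological disk bounded by $\Lambda_{X,Y}$, and the projection realizes it as a Lipschitz subsurface of $\mc{CH}_{X,Y}$.

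The main obstacle is the strict inequality $|\langle x,y\rangle|>1$ for points in different plaques: a nonstrict bound would allow $[x,y]$ to be lightlike or tangential and would destroy both the Lipschitz graph property and the topological injectivity of the projection. Strictness must come from using that $\xi$ is a topological embedding and that $x,y$ lie in the relative interiors of disjoint closed plaques, so the separating leaf sits strictly between them in $\Omega_{X,Y}$; quantifying this in the explicit matrix model, with the normalization above, is the most delicate piece of the argument and is where the bulk of the computation will go.
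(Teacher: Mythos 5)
You should first note that this paper does not actually prove Proposition \ref{pro:existence pleated sets}: it is imported verbatim from Proposition 3.7 of \cite{MV22a}, so your proposal can only be measured against that reference. Your overall architecture --- equivariance for free, acausality as the core, then the Lipschitz/subsurface structure via projection along timelike directions onto a spacelike plane --- is the right one and is essentially the one used there. Equivariance, and the acausality of a single leaf or plaque (via the signature computation on three points of $\Lambda_{X,Y}$), are fine as stated.

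The gap is in the acausality step for $x,y$ in distinct plaques, which is the entire content of the proposition. Your plan --- find a weakly separating leaf, normalize three boundary points to $(0,0),(1,1),(\infty,\infty)$, and ``write down the inner product'' --- does not reduce the problem to a finite check: two disjoint ideal triangles involve up to six points of $\Lambda_{X,Y}$, so after normalizing three of them you still face a several-parameter family of configurations constrained only by the cyclic order and the graph property of $h_{X,Y}$, plus the positions of $x$ and $y$ inside their triangles; asserting that ``the cyclic order forces $|\langle x,y\rangle|>1$'' is restating the theorem, not proving it, and a separating spacelike leaf alone cannot imply spacelikeness of $[x,y]$. The mechanism that actually closes the argument is: (i) a continuous lift of $\Lambda_{X,Y}$ to the rank-one cone in $M_2(\mb{R})$ (exactly as in the proof of Lemma \ref{lem:supporting plane}), so that every point of a closed plaque is a projectivized positive combination $\sum_i\lambda_i\xi(a_i)$ of its lifted ideal vertices; (ii) the observation that $\langle\xi(a),\xi(b)\rangle=-\tfrac14\mathrm{tr}(\xi(a)\xi(b)^\star)$ factors as a product of two $2\times 2$ determinants, one for each $\mb{RP}^1$ factor, so that the graph property of the orientation-preserving $h_{X,Y}$ forces all the cross terms $\langle\xi(a_i),\xi(b_j)\rangle$ to have a coherent sign; and (iii) a reversed Cauchy--Schwarz inequality for the signature-$(2,2)$ form applied to $\langle x,y\rangle=\sum_{i,j}\lambda_i\mu_j\langle\xi(a_i),\xi(b_j)\rangle$, with strictness coming from injectivity of $\xi$. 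Without (ii) and (iii) your computation has no mechanism to produce the strict bound $|\langle x,y\rangle|>1$. Separately, for the subsurface conclusion you assemble local Lipschitz graphs but never argue that $\widehat{S}_\lambda$ is closed and that its shadow under the timelike projection has no gaps; this requires the continuity of the geometric realization in the Hausdorff topology together with the fact that the leaves and plaques of $\lambda$ fill $\mb{H}^2$, and it should be made explicit before quotienting by $\rho_{X,Y}(\Gamma)$.
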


Incidentally, combining with classical 3-dimensional topology, Proposition \ref{pro:existence pleated sets} has also the following topological corollary: 

\begin{cor}
\label{cor:isotopy}
Let $\rho_{X,Y}$ be a Mess representation with parameters $X,Y\in\T$. Identify the Mess manifold $M_{X,Y}:=\Omega_{X,Y}/\rho_{X,Y}(\Gamma)$ with $\Sigma\times\mb{R}$. Let $\alpha\subset \Sigma$ be an essential multicurve. Then, the geodesic realization of $\alpha$ in $M_{X,Y}$ is isotopic to $\alpha\subset\Sigma\times\{0\}$.
\end{cor}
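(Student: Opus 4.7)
The plan is to reduce everything to the existence of an embedded pleated surface and then invoke a classical isotopy result in $3$-manifold topology. First, I would extend the essential multicurve $\alpha$ to a maximal geodesic lamination $\lambda\subset\Sigma$ by adding finitely many isolated leaves spiraling onto $\alpha$ until all complementary regions are ideal triangles. The periodic leaves of $\lambda$ corresponding to components of $\alpha$ have axes that project, via the pleated set construction and the quotient $\hat S_\lambda\to S_\lambda$, precisely to the geodesic realization of $\alpha$ in $M_{X,Y}$.

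By Proposition \ref{pro:existence pleated sets}, $\hat S_\lambda\subset\Omega_{X,Y}$ is a $\rho_{X,Y}(\Gamma)$-invariant topologically embedded Lipschitz acausal disk, and the action is cocompact on it, so the quotient $S_\lambda=\hat S_\lambda/\rho_{X,Y}(\Gamma)\hookrightarrow M_{X,Y}$ is a closed topologically embedded Lipschitz surface homeomorphic to $\Sigma$. By construction, the inclusion $S_\lambda\hookrightarrow M_{X,Y}$ induces the identity on $\pi_1(\Sigma)=\Gamma$, so $S_\lambda$ is a $\pi_1$-injective closed surface of genus $g$ carrying the geodesic realization of $\alpha$ on it.

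At this point I would invoke Waldhausen's theorem for product $I$-bundles: any two-sided $\pi_1$-injective embedded closed surface $F\subset\Sigma\times\mathbb{R}$ whose inclusion realizes the fiber class is ambient isotopic to $\Sigma\times\{0\}$. Applied to $F=S_\lambda$, this produces an ambient isotopy of $M_{X,Y}$ carrying $S_\lambda$ onto $\Sigma\times\{0\}$ via a map that, at the level of fundamental groups, is the identity. Under this ambient isotopy, the geodesic realization of $\alpha$ (which sits inside $S_\lambda$) is transported to a multicurve on $\Sigma\times\{0\}$ freely homotopic to $\alpha$. Since freely homotopic essential simple multicurves on a surface are isotopic (Baer--Epstein), the conclusion follows.

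The main technical obstacle is the application of Waldhausen's theorem in the merely Lipschitz regularity of $S_\lambda$: classical statements are phrased for smooth or PL surfaces. I would address this by approximating $S_\lambda$ by a smooth isotopic surface, using that Lipschitz embedded surfaces admit collar neighborhoods in $3$-manifolds, and noting that the core curves of $\alpha$ (which are actual spacelike geodesic segments) can be kept fixed throughout the smoothing. Once in the PL/smooth category, Waldhausen's theorem applies directly, and the last step reduces to the surface-level fact that homotopic simple multicurves are isotopic.
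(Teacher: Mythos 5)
Your proposal follows essentially the same route as the paper: complete $\alpha$ to a maximal lamination by adding spiraling leaves, invoke Proposition \ref{pro:existence pleated sets} to obtain an embedded $\pi_1$-injective (Lipschitz) pleated surface containing the geodesic realization of $\alpha$, and then apply Waldhausen's isotopy theorem for incompressible surfaces in $\Sigma\times\mathbb{R}$. The extra care you take with the Lipschitz-to-PL smoothing and the final Baer--Epstein step only makes explicit what the paper leaves implicit, so the argument is correct and matches the paper's proof.
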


\begin{proof}
Let $\lambda_\alpha$ be a maximal lamination obtained from $\alpha$ by adding finitely many geodesics spiraling around the curves in $\alpha$. By Proposition \ref{pro:existence pleated sets} there exists an embedded $\pi_1$-injective (Lipschitz) surface $S_{\alpha}={\hat S}_{\lambda_\alpha}/\rho_{X,Y}(\Gamma)\subset M_{X,Y}$ containing the geodesic realization of the curves in $\alpha$. By Proposition 3.1 and Corollary 3.2 of \cite{W68}, such surface, being embedded and $\pi_1$-injective, is isotopic to $\Sigma\times\{0\}$.
\end{proof}

\subsubsection{Bending locus}
The pleated set ${\hat S}_\lambda$ is not necessarily bent along all the lines in ${\hat \lambda}$.

\begin{dfn}[Bending Locus]
Let $\rho_{X,Y}$ be a Mess representation. Consider $\lambda$ a maximal lamination with geometric realization ${\hat \lambda}$, and denote by $\widehat{S}_\lambda$ the corresponding pleated set. A point $x\in\ell\subset{\hat \lambda}$ is in the \emph{bending locus of $\widehat{S}_\lambda$} if there is no (necessarily spacelike) geodesic segment $k$ entirely contained in $\widehat{S}_\lambda$ and such that ${\rm int}(k)\cap\ell=x$. 
\end{dfn}

We have:

\begin{pro}[Proposition 3.11 of \cite{MV22a}]
\label{pro:bending locus}
The bending locus is a sublamination of ${\hat \lambda}$, and its complement in $\widehat{S}_\lambda$ is a union of 2-dimensional totally geodesic spacelike regions.
\end{pro}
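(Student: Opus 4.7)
The plan is to prove a local lemma first — that at a non-bending point, a whole neighborhood of $\widehat{S}_\lambda$ lies in a single spacelike plane — and then deduce both closedness and leaf-saturation of the bending locus as formal consequences.

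\textbf{Local step.} Suppose $x \in \ell \subset \hat\lambda$ is not in the bending locus, and let $k \subset \widehat{S}_\lambda$ be a geodesic segment with ${\rm int}(k) \cap \ell = \{x\}$. Since $\widehat{S}_\lambda$ is acausal by Proposition \ref{pro:existence pleated sets}, the segment $k$ is spacelike, and together with the spacelike leaf $\ell$ to which it is transverse at $x$, it spans a spacelike totally geodesic plane $P \subset \mb{H}^{2,1}$. I would then show that any leaf or plaque of $\hat\lambda$ meeting ${\rm int}(k)$ is entirely contained in $P$. For a leaf $\ell'$ crossing ${\rm int}(k)$, both endpoints of $\ell'$ must lie in $\partial P$: a deviation of $\ell'$ out of $P$ would produce a pair of points of $\widehat{S}_\lambda$ joined by a non-spacelike segment, contradicting acausality. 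For a plaque the same reasoning forces its three ideal vertices into $\partial P$, and the plaque is then uniquely determined as the spacelike ideal triangle with those vertices, hence lies in $P$. It follows that a whole neighborhood $U$ of $x$ in $\widehat{S}_\lambda$ is contained in $P$.

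\textbf{Saturation and closedness.} By the local step, the complement of the bending locus in $\widehat{S}_\lambda$ is open and locally flat inside a spacelike plane, so its connected components are $2$-dimensional totally geodesic spacelike regions, which gives the second conclusion. For the first, suppose $x \in \ell$ is a non-bending point and let $R$ be the component of $P \cap \widehat{S}_\lambda$ containing $U$. Then $R$ is a union of plaques and leaves of $\hat\lambda$ bounded in $\widehat{S}_\lambda$ by complete leaves, and it contains an open subarc of $\ell$; hence $\ell$ cannot exit $R$ and so $\ell \subset R$, making every point of $\ell$ a non-bending point. Thus the complement of the bending locus in $\hat\lambda$ is saturated by leaves. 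Openness of this complement in $\hat\lambda$ (and hence closedness of the bending locus) is immediate as well: every point of $\hat\lambda \cap U$ inherits a transverse spacelike segment within $U \subset \widehat{S}_\lambda$. Combining the two properties yields the sublamination statement.

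\textbf{Main obstacle.} The delicate case of the local step is a non-isolated leaf $\ell$, where $k$ may cross an uncountable Cantor-like family of leaves accumulating on $\ell$, interspersed with arbitrarily thin plaques. Continuity alone does not force the tangent planes along this family to agree; the crucial input is the global acausality of $\widehat{S}_\lambda$, which rigidifies the configuration and pins every leaf and plaque met by $k$ onto the single plane $P$. I expect this rigidity argument, and the careful book-keeping of plaques adjacent to non-isolated leaves, to be where the bulk of the work lies.
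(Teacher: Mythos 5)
This proposition is not proved in the paper at all: it is imported verbatim as Proposition 3.11 of \cite{MV22a}, so there is no in-paper argument to compare yours against. Judged on its own terms, your proposal has a genuine gap, and it sits exactly at the step you rely on most: in the local step you claim that a leaf $\ell'$ (or plaque) meeting ${\rm int}(k)$ must lie in the plane $P$ spanned by $k$ and $\ell$, ``since a deviation of $\ell'$ out of $P$ would produce a pair of points of $\widehat{S}_\lambda$ joined by a non-spacelike segment.'' That implication is false. Acausality is only a pairwise spacelike-separation condition, and it is satisfied by genuinely bent pleated sets: $\partial^\pm\mc{CH}_{X,Y}$ is acausal while its plaques lie in many distinct spacelike planes. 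Concretely, if $u$ is the unit tangent to $k$ at a point $y\in{\rm int}(k)$ and $w$ is any spacelike unit vector at $y$ with $|\langle u,w\rangle|<1$, then every point of the geodesic $\cosh(t)y+\sinh(t)w$ is joined to every point of $k$ by a spacelike geodesic, and such $w$ form an open set of directions not contained in $T_yP$. So acausality with $k$ (or with any fixed acausal set) does not pin $\ell'$ into $P$; if your mechanism worked it would equally ``prove'' that every acausal pleated set is totally geodesic, which is false.

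The input that actually forces coplanarity is the \emph{straightness} of $k$ as a geodesic of $\mb{H}^{2,1}$, not the acausality of $\widehat{S}_\lambda$: two distinct totally geodesic spacelike planes containing a common leaf $\ell'$ meet only along $\ell'$, so a geodesic segment contained in $\widehat{S}_\lambda$ that crossed transversally from a germ supported in one plane to a germ supported in another would have to have a corner at $\ell'$ (its one-sided tangents at the crossing point would lie in the two distinct tangent planes, whose intersection is $T\ell'$, forcing tangency to $\ell'$ and contradicting transversality). Turning this local no-corner principle into the statement that \emph{all} leaves and plaques met by ${\rm int}(k)$ share one support plane --- in particular across a Cantor set of leaves accumulating on $\ell$, where one-sided germs need not a priori be planar --- is the real content of the proposition; it is usually handled by showing that the support plane (equivalently the future unit normal, cf.\ the Gauss map of Lemma \ref{lem:gauss map}) is a locally constant, hence constant, function along $k$, or by comparing $k$ with its image under the $1$-Lipschitz developing map of Definition \ref{def:developing map}. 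Your ``main obstacle'' paragraph correctly identifies where the difficulty lies but proposes to resolve it by the same acausality rigidity, so the gap is not merely a missing detail: the proof needs a different mechanism at its core. The second half of your argument (saturation by leaves and openness of the complement) is fine conditionally on a corrected local step.
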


\subsubsection{1-Lipschitz developing map}
Unfolding pleated sets along the bending locus naturally maps them to $\mb{H}^2$. We formalize this heuristic picture as follows:

\begin{dfn}[Developing Map]
\label{def:developing map}
Let $\rho_{X,Y}$ be a Mess representation. Let ${\hat S}_\lambda\subset\mc{CH}_{X,Y}$ be the pleated set associated with the maximal lamination $\lambda$. A {\em 1-Lipschitz developing map} is a homeomorphism $f:{\hat S}_\lambda\to\mb{H}^2$ with the following properties:
\begin{enumerate}
\item{It is totally geodesic on every leaf of ${\hat \lambda}$ and every plaque.}
\item{It is 1-Lipschitz with respect to the intrinsic pseudo-metric on ${\hat S}_\lambda$ and the hyperbolic metric on $\mb{H}^2$.}
\end{enumerate}
\end{dfn}

Developing maps have a couple of useful general properties which we now describe. First, they are totally geodesic outside the bending locus.

\begin{lem}[Lemma 6.2 of \cite{MV22a}]
\label{lem:totally geodesic outside bending}
Let $\rho_{X,Y}$ be a Mess representation, and let ${\hat S}_\lambda$ be the pleated set associated to a maximal lamination $\lambda$. Then every 1-Lipschitz developing map $f:{\hat S}_\lambda\to\mb{H}^2$ is totally geodesic on the complement of the bending locus of ${\hat S}_\lambda$.
\end{lem}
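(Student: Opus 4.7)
The idea is to localize around each leaf $\ell$ of $\lambda$ lying outside the bending locus, and use the injectivity of $f$ (as a homeomorphism) to prevent the two adjacent plaques from folding onto the same side of $f(\ell)$ in $\mathbb{H}^2$. Let $R$ be a connected component of $\widehat{S}_\lambda$ minus the bending locus; by Proposition \ref{pro:bending locus}, $R$ is a $2$-dimensional totally geodesic spacelike region of $\mathbb{H}^{2,1}$, whose induced metric (equivalently, the restriction of $d_{\mathbb{H}^{2,1}}$) makes it isometric to an open subset of $\mathbb{H}^2$. Since totally geodesic maps between surfaces of constant curvature $-1$ are precisely local isometries, it suffices to show that $f|_R : R \to \mathbb{H}^2$ is a local isometry.

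By property (1) of Definition \ref{def:developing map} combined with the fact that $f|_P$ is a homeomorphism onto its image, each plaque $P \subset R$ is mapped isometrically onto an ideal triangle of $\mathbb{H}^2$. Hence $f|_R$ is already a local isometry in the interior of each plaque. It remains to verify that $f|_R$ is a local isometry at each point of a leaf $\ell \subset R$. Let $P_1, P_2$ be the two plaques of $\lambda$ abutting $\ell$; both are contained in $R$, since $\ell$ is not in the bending locus (so a geodesic chord crosses $\ell$ through both plaques, witnessing that $P_1 \cup \ell \cup P_2$ lies in a single spacelike totally geodesic plane). The images $f(P_1), f(P_2) \subset \mathbb{H}^2$ are then ideal triangles sharing the geodesic $f(\ell)$.

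The key claim is that $f(P_1)$ and $f(P_2)$ lie on opposite sides of $f(\ell)$ in $\mathbb{H}^2$. Suppose not. Then $f(P_1)$ and $f(P_2)$ are two ideal triangles in $\mathbb{H}^2$ sharing a common edge but with third vertices on the same side of that edge at infinity. An elementary check in the upper half-plane model (with shared edge the imaginary axis and third vertices at distinct positive reals) shows that any two such triangles overlap in a nonempty open region. Since $P_1$ and $P_2$ are disjoint in $\widehat{S}_\lambda$, any such overlap contradicts the injectivity of the homeomorphism $f$. Consequently, $f$ sends $P_1 \cup \ell \cup P_2$ isometrically onto an ideal quadrilateral of $\mathbb{H}^2$, providing a local isometry of $f|_R$ across $\ell$. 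Running this argument at every leaf of $\lambda$ in $R$, we obtain that $f|_R$ is a local isometry everywhere.

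The main obstacle is the overlap claim for adjacent ideal triangles; although elementary, it is the mechanism that upgrades the purely topological injectivity of $f$ to the geometric rigidity needed to rule out folding across non-bent leaves. The 1-Lipschitz hypothesis of Definition \ref{def:developing map} is not explicitly invoked here, but is consistent with the conclusion: once $f|_R$ is an isometric embedding, its 1-Lipschitz bound is automatically saturated on $R$.
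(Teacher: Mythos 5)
The paper does not reproduce a proof of this statement (it is quoted from Lemma~6.2 of \cite{MV22a}), so I can only judge your argument on its own merits. Your local analysis at an \emph{isolated} leaf is correct: two ideal triangles of $\mb{H}^2$ sharing an edge with third vertices on the same side do overlap, so injectivity forces opposite sides, and then continuity along the shared edge rules out any shear, so $f$ restricts to an isometry of the ideal quadrilateral $P_1\cup\ell\cup P_2$. The problem is that this is the \emph{only} gluing mechanism in your proof, and it applies only to leaves that are common boundary edges of two plaques, i.e.\ leaves isolated on both sides. For a general maximal lamination $\lambda$ --- for instance a minimal, arational one, where every complementary region is already an ideal triangle and no leaf is isolated --- no two plaques are adjacent: each boundary edge of a plaque is accumulated by other leaves from the opposite side. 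A totally geodesic region $R$ in the complement of the bending locus (e.g.\ all of ${\hat S}_\lambda$ in the Fuchsian case $X=Y$, where the bending locus is empty) can then contain uncountably many non-isolated leaves, and your argument establishes nothing beyond ``$f$ is isometric on each plaque separately,'' which does not glue to the conclusion.

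The tell is your closing remark that the 1-Lipschitz hypothesis is ``not explicitly invoked.'' The statement is false without it: take $X=Y$, so ${\hat S}_\lambda$ is a single spacelike plane isometric to $\mb{H}^2$ and the bending locus is empty, and let $\mu$ be an atomless transverse measure supported on $\lambda$. The earthquake map $E_\mu:\mb{H}^2\to\mb{H}^2$ is a homeomorphism, is totally geodesic on every leaf and every plaque of $\lambda$ (so it satisfies property (1) of Definition \ref{def:developing map}), yet it is not totally geodesic on $R={\hat S}_\lambda$; it fails exactly property (2). So any correct proof must use the metric condition --- for non-isolated leaves the ``shearing'' can be distributed continuously and injectively across a Cantor transversal, and only the 1-Lipschitz bound (played against the fact that $f$ is an isometry on each complete leaf and each plaque, which are dense in $R$) can rule it out. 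As written, your proof is complete only for maximal laminations all of whose leaves outside the bending locus are isolated (e.g.\ finite-leaved laminations), and the missing case is the generic one.
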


Secondly, developing maps are contracting with respect to the natural path metric structure on pleated sets.

\begin{dfn}[Regular Path]
\label{def:length space}    
A {\em (weakly) regular path} is a map $\gamma:I=[a,b]\to\mb{H}^{2,1}$ such that: 
\begin{itemize}
\item{The path is Lipschitz}.
\item{The tangent vector ${\dot \gamma}(t)$ is spacelike (or lightlike) for almost every $t\in I$ (at which $\dot \gamma$ is defined).}
\end{itemize}
The length of a weakly regular path is 
\[
L(\gamma):=\int_I{\sqrt{\langle{\dot \gamma}(t),{\dot \gamma}(t)\rangle} dt}.
\]
The Lipschitz property implies that the length $L(\gamma)$ is always finite.
\end{dfn}

\begin{lem}[Claim 2 of Lemma 6.4 in \cite{MV22a}]
\label{lem:path length}
Let ${\hat S}\subset\mb{H}^{2,1}$ be an acausal subset. Let $\gamma:I=[a,b]\to{\hat S}$ be a weakly regular path. Then
\[
L(\gamma)=\lim_{\ep\to 0}\int_I{\frac{d_{\mb{H}^{2,1}}(\gamma(t),\gamma(t+\ep))}{\ep}{ \rm dt}}.
\]
\end{lem}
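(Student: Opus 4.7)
The strategy is to establish pointwise convergence of the integrand at almost every $t$ and then invoke dominated convergence. The acausality of $\hat{S}$ guarantees the pseudo-distance is defined for every pair of points on the image of $\gamma$, and the Lipschitz hypothesis produces the uniform bound needed to interchange limit and integral.

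At any Lebesgue differentiability point $t$ of $\gamma$—a full-measure set by Rademacher's theorem—I would exploit the polarization identity in the ambient linear model $M_2(\mb{R})$:
\[
2\langle \gamma(t), \gamma(t+\ep)\rangle = \langle \gamma(t),\gamma(t)\rangle + \langle \gamma(t+\ep), \gamma(t+\ep)\rangle - \langle \gamma(t+\ep)-\gamma(t), \gamma(t+\ep)-\gamma(t)\rangle.
\]
Since $\gamma$ takes values in ${\rm SL}_2(\mb{R})$, the first two terms equal $-1$, so
\[
\langle \gamma(t), \gamma(t+\ep)\rangle = -1 - \tfrac{1}{2}\langle \gamma(t+\ep)-\gamma(t), \gamma(t+\ep)-\gamma(t)\rangle.
\]
Substituting $\gamma(t+\ep) - \gamma(t) = \ep\dot\gamma(t) + o(\ep)$ yields $|\langle \gamma(t), \gamma(t+\ep)\rangle| = 1 + \tfrac{\ep^2}{2}\langle \dot\gamma(t), \dot\gamma(t)\rangle + o(\ep^2)$. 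Inverting $\cosh$ via $\cosh^{-1}(1+s) = \sqrt{2s} + O(s^{3/2})$ gives
\[
\frac{d_{\mb{H}^{2,1}}(\gamma(t), \gamma(t+\ep))}{\ep} \xrightarrow[\ep\to 0]{} \sqrt{\langle \dot\gamma(t), \dot\gamma(t)\rangle},
\]
which is exactly the integrand defining $L(\gamma)$. If $\dot\gamma(t)$ is lightlike the limit is $0$, consistent with the length formula.

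For dominated convergence I need a uniform bound. Let $K$ be the Lipschitz constant of $\gamma$ with respect to any fixed Euclidean norm on $M_2(\mb{R})$. Then $\|\gamma(t+\ep)-\gamma(t)\| \leq K\ep$, so continuity of $\langle \cdot, \cdot \rangle$ gives $|\langle \gamma(t+\ep)-\gamma(t), \gamma(t+\ep)-\gamma(t)\rangle| \leq CK^2 \ep^2$, whence $|\langle \gamma(t), \gamma(t+\ep)\rangle| \leq 1 + CK^2\ep^2/2$. Since $\cosh^{-1}(1+s) = O(\sqrt{s})$ near $0$, one deduces $d_{\mb{H}^{2,1}}(\gamma(t),\gamma(t+\ep))/\ep \leq C'K$ uniformly in $t$ and $\ep$. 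Combining the pointwise limit a.e. with this $L^\infty$ bound, the dominated convergence theorem delivers the identity.

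The main obstacle is not analytic but structural: confirming that the pseudo-distance is defined at every pair $(\gamma(t), \gamma(t+\ep))$, i.e. that $|\langle \gamma(t), \gamma(t+\ep)\rangle| \geq 1$ throughout. This is precisely the role of the acausality of $\hat{S}$—without it, nearby pairs on a weakly regular path with lightlike velocity could in principle fall into the timelike regime and derail the expansion. One must also verify that the $o(\ep)$ remainder from Rademacher differentiability is uniform enough to survive integration, which is standard once the Lipschitz domination is in place.
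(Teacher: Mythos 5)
Your argument is correct: lifting $\gamma$ to the hyperboloid and using the polarization identity $\langle\gamma(t),\gamma(t+\ep)\rangle=-1-\tfrac{1}{2}\langle\Delta,\Delta\rangle$, with acausality forcing $\langle\Delta,\Delta\rangle\ge 0$ so that the $\mathrm{arccosh}$ expansion applies, and the elementary bound $\mathrm{arccosh}(1+s)\le\sqrt{2s}$ turning the Lipschitz constant into a uniform dominating function for the convergence theorem, are exactly the right ingredients, and the lightlike case is handled consistently. Note that the paper itself gives no proof of this statement --- it is quoted as Claim 2 of Lemma 6.4 of \cite{MV22a} --- so there is no in-paper argument to compare against; your proof is the natural one, and the only points worth tidying are cosmetic (choosing a continuous Lipschitz lift from ${\rm PSL}_2(\mb{R})$ to ${\rm SL}_2(\mb{R})$, and truncating the integral to $[a,b-\ep]$ as the paper implicitly does elsewhere).
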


\begin{lem}[Lemma 6.4 of \cite{MV22a}]
\label{lem:rectifiable to rectifiable}
Let $\rho_{X,Y}$ be a Mess representation, and let ${\hat S}_\lambda$ be the pleated set associated to a maximal lamination $\lambda$. Then every 1-Lipschitz developing map $f:{\hat S}_\lambda\to\mb{H}^2$ sends weakly regular paths $\gamma:I\to {\hat S}_\lambda$ to Lipschitz (hence rectifiable) paths $f\gamma:I\to \mb{H}^2$ of smaller length $L(\gamma)\ge L(f\gamma)$.
\end{lem}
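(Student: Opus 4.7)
My plan is to compare the two lengths through their representations as limits of divided differences of the natural distance functions, using the 1-Lipschitz property of $f$ to dominate term by term. On the pleated-set side, Lemma \ref{lem:path length} already gives $L(\gamma)=\lim_{\ep\to 0}\int_I d_{\mb{H}^{2,1}}(\gamma(t),\gamma(t+\ep))/\ep\,{\rm dt}$. On the hyperbolic side, once $f\gamma$ is known to be Lipschitz, the classical Riemannian length admits the analogous expression $L(f\gamma)=\lim_{\ep\to 0}\int_I d_{\mb{H}^2}(f\gamma(t),f\gamma(t+\ep))/\ep\,{\rm dt}$ (by Rademacher's theorem and dominated convergence). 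The 1-Lipschitz hypothesis gives the pointwise inequality $d_{\mb{H}^2}(f\gamma(t),f\gamma(t+\ep))\le d_{\mb{H}^{2,1}}(\gamma(t),\gamma(t+\ep))$, which after passing to the limit yields $L(f\gamma)\le L(\gamma)$.

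The first concrete step is therefore to prove that $f\gamma$ is Lipschitz with respect to $d_{\mb{H}^2}$. I would fix a compact subset $K\subset\hat{S}_\lambda$ containing the image of $\gamma$. By Proposition \ref{pro:existence pleated sets}, $\hat{S}_\lambda$ is an acausal topological Lipschitz disk; locally it is the graph of a Lipschitz function over a spacelike plane, so for sufficiently close $x,y\in K$ the spacelike geodesic $[x,y]$ has length bounded by a uniform multiple of the ambient Riemannian distance between $x$ and $y$ in $\mb{H}^{2,1}$. Combining with the Lipschitz property of $\gamma$ built into the definition of weakly regular path, this gives $d_{\mb{H}^{2,1}}(\gamma(t),\gamma(s))\le C|t-s|$ for some $C=C(\gamma)$, and the 1-Lipschitz property of $f$ then upgrades this to $d_{\mb{H}^2}(f\gamma(t),f\gamma(s))\le C|t-s|$, establishing Lipschitzness (and hence rectifiability) of $f\gamma$.

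With this in hand, I would conclude by chaining
\[
L(f\gamma)=\lim_{\ep\to 0}\int_I\frac{d_{\mb{H}^2}(f\gamma(t),f\gamma(t+\ep))}{\ep}{\rm dt}\le\lim_{\ep\to 0}\int_I\frac{d_{\mb{H}^{2,1}}(\gamma(t),\gamma(t+\ep))}{\ep}{\rm dt}=L(\gamma),
\]
where the middle inequality is the 1-Lipschitz bound on $f$ integrated against ${\rm dt}$, and the outer equalities are the standard Riemannian length formula on the left and Lemma \ref{lem:path length} on the right.

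I expect the main obstacle to be the quantitative comparability between the pseudo-metric $d_{\mb{H}^{2,1}}$ and the ambient Riemannian distance on compact subsets of $\hat{S}_\lambda$. Acausality alone only ensures that $d_{\mb{H}^{2,1}}(x,y)$ is well defined and positive; extracting a uniform linear bound requires genuinely exploiting the Lipschitz graph description of $\hat{S}_\lambda$ together with the smoothness of the bilinear form $\langle\bullet,\bullet\rangle$ governing $d_{\mb{H}^{2,1}}$ through the relation $\cosh(d_{\mb{H}^{2,1}}(x,y))=|\langle x,y\rangle|$ in a fixed chart. Once this local comparison is set up, the remaining exchange of limits is routine.
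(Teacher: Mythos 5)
The paper does not actually prove this lemma: it is imported verbatim from \cite{MV22a} (Lemma 6.4 there), and the only trace of its proof in the present text is Lemma \ref{lem:path length}, which is explicitly labelled ``Claim 2 of Lemma 6.4 in \cite{MV22a}'' --- that is, a step extracted from the proof of the very statement at hand. Your argument is correct and reconstructs what is evidently the intended route: represent $L(\gamma)$ via Lemma \ref{lem:path length}, represent $L(f\gamma)$ via the analogous metric-derivative formula for Lipschitz curves in $\mb{H}^2$, and compare the integrands using the chordal $1$-Lipschitz inequality $d_{\mb{H}^2}(f(x),f(y))\le d_{\mb{H}^{2,1}}(x,y)$.

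One small remark: the step you flag as the main obstacle is easier than you fear. You only need the \emph{upper} bound $d_{\mb{H}^{2,1}}(x,y)\le C\,\|x-y\|$ on compact pieces of $\hat S_\lambda$, not a two-sided comparison, and on the hyperboloid $\langle x,x\rangle=-1$ one has the identity $-\langle x,y\rangle = 1+\tfrac12\langle x-y,x-y\rangle$, so $\cosh(d_{\mb{H}^{2,1}}(x,y))-1=\tfrac12\langle x-y,x-y\rangle\le C\|x-y\|^2$ for any Euclidean norm dominating the quadratic form; hence $d_{\mb{H}^{2,1}}(x,y)=O(\|x-y\|)$ with no appeal to the Lipschitz-graph structure. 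The only role of acausality (Proposition \ref{pro:existence pleated sets}) is to guarantee that the chord $[x,y]$ is spacelike so that $d_{\mb{H}^{2,1}}$ is defined at all. With that bound, the Lipschitzness of $f\gamma$, the dominated-convergence exchange of limit and integral on the $\mb{H}^2$ side, and the termwise comparison all go through as you describe.
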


\subsubsection{Pleated surfaces}
The following result makes sure that every pleated set ${\hat S}_\lambda$ admits a natural 1-Lipschitz developing map:

\begin{pro}[Proposition 6.6 in \cite{MV22a}]
\label{pro:hyperbolic structure}
Let $\rho_{X,Y}$ be a Mess representation. For every maximal lamination $\lambda\in\mc{GL}_\lambda$ there is:
\begin{itemize}
\item{An intrinsic hyperbolic structure $Z_\lambda\in\T$.}
\item{A $(\rho_{X,Y}-\rho_\lambda)$-equivariant 1-Lipschitz developing map $f:{\hat S}_\lambda\to\mb{H}^2$ where $\rho_\lambda$ is the holonomy of $Z_\lambda$.}
\end{itemize} 
\end{pro}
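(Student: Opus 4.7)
My plan is to construct $Z_\lambda$ and $f$ in tandem, by reading off the shear cocycle of $\hat{S}_\lambda$ and then folding a standard hyperbolic surface onto it. Each plaque $\Delta \subset \hat{S}_\lambda$ is a spacelike ideal triangle, which as a totally geodesic spacelike submanifold of $\mathbb{H}^{2,1}$ is isometric to an ideal triangle of $\mathbb{H}^2$. For two plaques $T_1, T_2$ sharing a leaf $\ell \subset \hat\lambda$, the foot of the perpendicular from the opposite vertex of $T_i$ onto $\ell$ (computed inside the spacelike plane of $T_i$) yields a canonical point $p_i(\ell) \in \ell$; the signed distance from $p_1(\ell)$ to $p_2(\ell)$ along $\ell \cong \mathbb{R}$ defines the shear across $\ell$. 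Assembling these pointwise shears via Bonahon's formalism produces a $\rho_{X,Y}(\Gamma)$-invariant transverse cocycle $\sigma_\lambda \in \mathcal{H}(\lambda;\mathbb{R})$.

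\textbf{Intrinsic hyperbolic structure and developing map.} The next step is to verify that $\sigma_\lambda$ lies in Bonahon's Teichmüller cone inside $\mathcal{H}(\lambda;\mathbb{R})$, characterized by Thurston-type positivity against measured sublaminations, so that it comes from a genuine hyperbolic structure $Z_\lambda \in \T$ with holonomy $\rho_\lambda$. The expected input is the acausality of $\hat{S}_\lambda$ from Proposition~\ref{pro:existence pleated sets}: a violation of positivity would force the iterated plaques to drift back into a timelike configuration, contradicting spacelike separation. Granted $Z_\lambda$, I would define $f:\hat{S}_\lambda \to \mathbb{H}^2$ by fixing an isometry from a base plaque $\Delta_0 \subset \hat{S}_\lambda$ onto its corresponding ideal triangle in $\mathbb{H}^2$, then propagating across each leaf so that the AdS-shears coming from $\hat{S}_\lambda$ exactly match the $\mathbb{H}^2$-shears of $Z_\lambda$. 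Well-definedness follows from $\hat{S}_\lambda$ being a topological disk, and $(\rho_{X,Y},\rho_\lambda)$-equivariance of $f$ is automatic from the $\Gamma$-invariance of $\sigma_\lambda$. By construction $f$ is a homeomorphism that is totally geodesic on every leaf and every plaque.

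\textbf{The 1-Lipschitz bound (main obstacle).} The delicate step is the inequality $d_{\mathbb{H}^{2,1}}(x,y) \geq d_{\mathbb{H}^2}(f(x),f(y))$. The idea is to reduce via Lemma~\ref{lem:path length} and Lemma~\ref{lem:rectifiable to rectifiable} to an infinitesimal comparison, so the core question concerns the two-plaque model: two spacelike ideal triangles $T_1,T_2$ in $\mathbb{H}^{2,1}$ glued along a common spacelike geodesic $\ell$ with some shear, compared with the unbent wedge sitting in $\mathbb{H}^2$. For $x \in T_1$ and $y \in T_2$, I would expand $|\langle x,y\rangle|$ in the $(2,2)$-Killing form by decomposing $x$ and $y$ via their orthogonal projections onto $\ell$; the resulting trigonometric identity has the shape of the flat $\mathbb{H}^2$ expression multiplied by a $\cosh$-factor carrying the bending between $T_1$ and $T_2$. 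Because the bending hinges on a \emph{spacelike} axis, this factor is a hyperbolic cosine rather than a trigonometric one, hence $\geq 1$, and the desired inequality follows. The main obstacle is not this two-plaque identity itself, but making it uniform across paths crossing densely accumulating sequences of leaves: one must integrate the infinitesimal bound carefully against Lemma~\ref{lem:path length}, handling the non-smoothness of $\hat{S}_\lambda$ along the bending locus and checking that the cumulative shear along a weakly regular path does not destroy the pointwise estimate.
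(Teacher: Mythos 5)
This proposition is quoted from \cite{MV22a} (Proposition 6.6 there) and not reproved in the present paper, so your proposal has to stand on its own. Its skeleton --- extract a shear cocycle from ${\hat S}_\lambda$, show it lies in Bonahon's cone to obtain $Z_\lambda$, then build $f$ by matching shears --- is a legitimate alternative route, and your two-plaque computation is essentially right: writing $x=\cosh(a)p+\sinh(a)u$ and $y=\cosh(b)q+\sinh(b)v$ with $p,q\in\ell$ the feet of the perpendiculars, one finds $-\langle x,y\rangle=\cosh a\cosh b\cosh c+\sinh a\sinh b\cosh\theta$, so the bending angle $\theta$ (hyperbolic, since the hinge is spacelike) enters through a factor $\cosh\theta\ge 1$ on the cross term only --- not as a multiplier of the whole expression, as you write --- and the unbent $\mathbb{H}^2$ quantity is indeed dominated. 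The genuine gaps sit exactly at the two places you defer.

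First, positivity. ``A violation of positivity would force the iterated plaques to drift back into a timelike configuration'' is not an argument: Bonahon's condition is $\omega_{\rm Th}(\sigma_\lambda,\mu)>0$ for every nonzero $\mu\in\mc{ML}_\lambda$, and such a $\mu$ is in general not detected by any finite chain of plaques, so there is no finite configuration whose acausality you could contradict; moreover, for an uncountable $\lambda$ even the convergence of the finite sums defining $\sigma_\lambda(P,Q)$ must be established first (this is the content of Theorem \ref{thm:shear-bend cocycle} and is not free). The argument that actually closes this step is the Gauss-map computation (Lemmas \ref{lem:gauss map} and \ref{lem:gauss shear}): the cocycle in question equals $(\sigma^X_\lambda+\sigma^Y_\lambda)/2$, which lies in the image of $\T$ because that image is a \emph{convex} cone (Theorem \ref{thm:thurston bonahon}) containing $\sigma^X_\lambda$ and $\sigma^Y_\lambda$. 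Second, your Lipschitz argument proves the wrong inequality. Integrating the infinitesimal two-plaque bound along weakly regular paths (and note that Lemma \ref{lem:rectifiable to rectifiable} is a \emph{consequence} of the $1$-Lipschitz property of developing maps, so invoking it here is circular) controls $d_{\mathbb{H}^2}(f(x),f(y))$ by the intrinsic path length; but the proposition is used in this paper (e.g.\ in Proposition \ref{pro:lengthA}) with the chordal pseudo-metric $\cosh d_{\mathbb{H}^{2,1}}(x,y)=|\langle x,y\rangle|$, and on acausal sets the chordal distance dominates the path distance --- a reverse-triangle-inequality phenomenon, compare Claim 3 in the proof of Proposition \ref{pro:lengthB} --- not the other way around. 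So you must either prove $d_{\rm path}\le d_{\mathbb{H}^{2,1}}$ on ${\hat S}_\lambda$ separately, or establish the chordal inequality directly by induction over finite chains of plaques followed by a limit. Finally, for uncountable $\lambda$ the union of the plaques is a dense but proper subset of ${\hat S}_\lambda$, so ``propagating across each leaf'' does not define $f$ everywhere; the continuous equivariant extension of $f$ to the leaves of ${\hat \lambda}$ that are not edges of plaques, and the fact that the result is a homeomorphism onto $\mathbb{H}^2$, require the approximation argument you have suppressed.
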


We can finally define pleated surfaces:

\begin{dfn}[Pleated Surface]
\label{def:pleated surface}
Let $\rho_{X,Y}$ be a Mess representation. The {\em pleated surface} associated with the maximal lamination $\lambda\in\mc{GL}$ consists of the following data:
\begin{enumerate}
\item{The pleated set ${\hat S}_\lambda$.}
\item{The intrinsic hyperbolic holonomy $\rho_\lambda:\Gamma\to{\rm PSL}_2(\mb{R})$ of $Z_\lambda$.}
\item{A $(\rho_{X,Y}-\rho_\lambda)$-equivariant 1-Lipschitz developing map $f:{\hat S}_\lambda\to\mb{H}^2$.}
\end{enumerate}
\end{dfn}

Let us conclude this discussion by observing that pleated surfaces for a fixed Mess representation $\rho_{X,Y}$ have some useful compactness properties:

\begin{lem}
\label{lem:pleated cpt}
Let $\rho_{X,Y}$ be the Mess representation with parameters $X,Y\in\T$. Then the space of intrinsic hyperbolic structures on the pleated sets
\[
\{Z_\lambda\}_{\lambda\in\mc{GL}_m}
\]
is pre-compact in $\T$.
\end{lem}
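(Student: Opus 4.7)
The plan is to reduce pre-compactness to a uniform upper bound on the length spectrum, which is handed to us directly by the Theorem$^{**}$ recalled in the introduction (Theorems A-C of \cite{MV22a}). Indeed, for every maximal lamination $\lambda$ and every non-trivial $\gamma\in\Gamma$, one has
\[
L_{Z_\lambda}(\gamma)=L_{\rho_\lambda}(\gamma)\le L_{\rho_{X,Y}}(\gamma)=\tfrac{1}{2}\bigl(L_X(\gamma)+L_Y(\gamma)\bigr),
\]
where the right hand side depends only on the fixed data $X,Y\in\T$. So the lengths of each simple closed curve are uniformly bounded above across the family $\{Z_\lambda\}_{\lambda\in\mc{GL}_m}$.

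Next I would upgrade this upper bound to a uniform lower bound on the systole via the collar lemma. Fix a finite filling system of simple closed curves $\{\delta_1,\dots,\delta_k\}$ on $\Sigma$, meaning that every essential simple closed curve on $\Sigma$ has positive geometric intersection with at least one $\delta_i$. If along some sequence $\lambda_n$ the systole $\mathrm{sys}(Z_{\lambda_n})$ tended to $0$, with shortest curves $\gamma_n$, then by pigeonhole some fixed $\delta_i$ would cross $\gamma_n$ for infinitely many $n$; the collar lemma would force $L_{Z_{\lambda_n}}(\delta_i)\to\infty$, contradicting the uniform upper bound of the first step. Hence $\mathrm{sys}(Z_\lambda)$ stays uniformly bounded away from $0$.

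Finally, to conclude pre-compactness in $\T$ (rather than merely in $\mc{M}$, where Mumford's theorem would suffice), I would fix a pants decomposition $\{\alpha_i\}$ together with a system of dual curves $\{\beta_i\}$ on $\Sigma$: the length coordinates $L_{Z_\lambda}(\alpha_i)$ are two-sidedly bounded, and the bounded values of $L_{Z_\lambda}(\beta_i)$ combined with the bounded $\alpha_i$-lengths pin down the Fenchel-Nielsen twist parameters to a compact range, placing the whole family in a compact subset of $\T$. There is no real obstacle in this argument: all the non-trivial content already sits in the length inequality from \cite{MV22a}, and the remaining steps are standard hyperbolic surface theory.
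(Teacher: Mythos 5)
Your argument is correct, but it takes a genuinely different route from the paper's. The paper never passes through the marked length spectrum: it chooses a basepoint $x_\lambda\in{\hat S}_\lambda\cap F$ inside a fixed compact fundamental domain $F$ for the cocompact action of $\rho_{X,Y}(\Gamma)$ on $\mc{CH}_{X,Y}$, normalizes the $1$-Lipschitz developing map so that $f_\lambda(x_\lambda)=o$, and uses equivariance plus the Lipschitz property to get $d_{\mb{H}^2}(o,\rho_\lambda(\gamma)o)\le d_{\mb{H}^{2,1}}(x_\lambda,\rho_{X,Y}(\gamma)x_\lambda)\le K_\gamma$ uniformly in $\lambda$; a uniform bound on the orbit displacement of a basepoint over a generating set gives pre-compactness directly in ${\rm Hom}(\Gamma,{\rm PSL}_2(\mb{R}))$, with no surface-theoretic input. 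You instead black-box the translation-length inequality $L_{\rho_\lambda}\le L_{\rho_{X,Y}}$ (the ``Furthermore'' clause of the theorem recalled from \cite{MV22a}, which is itself proved by essentially the same Lipschitz mechanism) and then supply the extra Teichm\"uller theory --- the collar lemma against a finite filling system for the systole lower bound, then Fenchel--Nielsen coordinates with dual curves to confine the twists --- that becomes necessary because translation lengths, unlike basepoint displacements, do not immediately control a representation up to conjugacy. Both arguments are sound; the paper's is shorter and more self-contained at this point in the text, while yours trades that for standard hyperbolic surface theory and has the mild advantage of exhibiting an explicit compact subset of $\T$, cut out by two-sided length bounds, containing the whole family.
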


\begin{proof}
Recall that $\rho_{X,Y}(\Gamma)$ acts cocompactly on $\mc{CH}_{X,Y}$. Let $F\subset\mc{CH}_{X,Y}$ be a compact fundamental domain. For every maximal lamination $\lambda\in\mc{GL}_m$ with associated pleated set ${\hat S}_\lambda\subset\mc{CH}_{X,Y}$ choose a basepoint $x_\lambda\in{\hat S}_\lambda\cap F$. Let $f_\lambda:{\hat S}_\lambda\to\mb{H}^2$ be a $(\rho_{X,Y}-\rho_\lambda)$-equivariant 1-Lipschitz developing map normalized so that $f_\lambda(x_\lambda)=o\in\mb{H}^2$, a fixed basepoint. The equivariance and the 1-Lipschitz property tell us that 
\[
d_{\mb{H}^2}(o,\rho_\lambda(\gamma)o)\le d_{\mb{H}^{2,1}}(x_\lambda,\rho_{X,Y}(\gamma)x_\lambda)
\]
for every $\gamma\in\Gamma$. Notice that the right hand side is bounded from above by a uniform constant $K_\gamma$ independent of $\lambda$ as $x_\lambda\in F$ is contained in a compact set and 
\[
\cosh(d_{\mb{H}^{2,1}}(x_\lambda,\rho_{X,Y}(\gamma)x_\lambda))=|\langle x_\lambda,\rho_{X,Y}(\gamma)x_\lambda\rangle|.
\]
Therefore the set of representations $\{\rho_\lambda\}_{\lambda\in\mc{GL}_m}\subset\T\subset{\rm Hom}(\Gamma,{\rm PSL}_2(\mb{R}))$ is pre-compact.
\end{proof}

\subsubsection{Convex core}
An example of pleated surface is given by the two connected components of the boundary of the convex core $\partial\mc{CH}_{X,Y}=\partial^+\mc{CH}_{X,Y}\cup\partial^-\mc{CH}_{X,Y}$. Each of them has the structure of a pleated set with bending loci $\lambda^+$ and $\lambda^-$ and intrinsic hyperbolic structures $Z_{\lambda^+},Z_{\lambda^-}\in\T$. As we mentioned in the introduction, measuring the total turning angles along paths $\alpha:I\to\partial^\pm\mc{CH}_{X,Y}$ equips the geodesic laminations $\lambda^\pm$ with a transverse measure and, hence identifies a pair of points $\lambda^\pm\in\mc{ML}$. Mess proves that we have the following relations
\[
\xymatrix{
&Z_{\lambda^+}\ar[dl]_{E_{\lambda^+}^r}\ar[dr]^{E_{\lambda^+}^l} &\\
X & &Y\\
&Z_{\lambda^-}\ar[ul]^{E_{\lambda^-}^r}\ar[ur]_{E_{\lambda^-}^l} &
}
\]
where $E_{\lambda^+}^l,E_{\lambda^-}^l,E_{\lambda^+}^r,E_{\lambda^-}^r$ are the {\rm left} and {\rm right earthquakes} induced by the measured laminations $\lambda^+,\lambda^-$. Heuristically speaking, an earthquake is the generalization to laminations of a {\em twist deformation} along a simple closed geodesic. Given a closed geodesic $\gamma$ on a hyperbolic surface $X$ and a real parameter $\theta>0$ we do the following operation: We lift $\gamma$ to a $\rho_X(\Gamma)$-invariant family of pairwise disjoint geodesics $\lambda\subset\mb{H}^2$. We cut $\mb{H}^2$ along $\lambda$. We reglue all the ideal polygons $P\subset\mb{H}^2-\lambda$ by composing all the initial identifications $\ell\subset\partial P\to\ell'\subset\partial P'$ (left-to-right) with the isometry of $\ell'$ given by $t\to t+\theta$ (the identification $\ell'=\mb{R}$ is determined by the boundary orientation). The result is still isometric to $\mb{H}^2$ but the action of $\Gamma$ on it is the holonomy of a different hyperbolic structure, which, depending on the choices of orientations, is $E^l_{\theta\gamma}(X)$ or $E^r_{\theta\gamma}(X)$.

We will describe more carefully the various elements that enter this picture in the next section where we will prove a generalization of the result of Mess.

\subsubsection{Initial and terminal singularities}
We end this section by describing the {\em initial} and {\em terminal singularities} of $\Omega_{X,Y}$ which are subsets of $\partial\Omega_{X,Y}$ dual to the boundary components of the convex core. Duality is understood in the sense of the duality induced by the quadratic form $\langle\bullet,\bullet\rangle_{(2,2)}$ on $\mb{P}(M_2(\mb{R}))$. Explicitly, we have 
\[
\mb{P}(L)\leftrightarrow\mb{P}(L^\perp)
\]
where $L^\perp\subset M_2(\mb{R})$ is the linear subspace orthogonal to $L$ with respect to the quadratic form. 

Define the following:

\begin{dfn}[Initial and Terminal Singularities]
The sets $\mc{S}^\pm$ of dual points of supporting planes of $\partial^\pm\mc{CH}_{X,Y}$ are the {\em initial} and {\em terminal singularities}.
\end{dfn} 

Let us start with the following observation: 

\begin{lem}
\label{lem:supporting plane}
Let $H=P(V)\cap\mb{H}^{2,1}$ be a supporting plane of $\partial^\pm\mc{CH}_{X,Y}$. Then:
\begin{itemize}
\item{$H$ is spacelike and defines a dual point $P(V^\perp)\in\mb{H}^{2,1}$. Let $w\in V^\perp$ be a unit timelike vector pointing outside $\mc{CH}_{X,Y}$.}
\item{For every $x\in H\cap\mc{CH}_{X,Y}$, the timelike geodesic $\gamma(t)=\cos(t)x-\sin(t)w$ with $t\in[0,\pi/2)$ is contained in $\Omega_{X,Y}$ while $w=\gamma(\pi/2)\in\partial\Omega_{X,Y}$.} 
\end{itemize}

Any two distinct supporting planes $H_1,H_2$ of $\partial^+\mc{CH}_{X,Y}$ intersect in a spacelike geodesic $H_1\cap H_2$. If $w_1,w_2$ are the dual points of $H_1,H_2$, then $[w_1,w_2]$ is spacelike.
\end{lem}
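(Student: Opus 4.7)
The plan is to proceed in three steps matching the three assertions. First I will show that $H$ is spacelike, so that $\mb{P}(V^\perp)$ is a well-defined point in $\mb{H}^{2,1}$ with unit timelike representative $w$. For this I will invoke Proposition \ref{pro:existence pleated sets} (acausality of $\partial^\pm\mc{CH}_{X,Y} = \widehat{S}_{\lambda^\pm}$) together with Proposition \ref{pro:bending locus} (the complement of the bending locus is a union of $2$-dimensional spacelike totally geodesic plaques): any supporting plane $H$ of $\partial^\pm\mc{CH}_{X,Y}$ must contain a plaque (or arise as a rotation limit of plaque planes about a spacelike leaf), hence be spacelike. Consequently $V$ has signature $(2,1)$ in the ambient $(2,2)$ form, $V^\perp$ is one-dimensional timelike, and I pin down the sign of the unit vector $w$ by requiring $\mc{CH}_{X,Y} \subseteq \{\langle \cdot, w\rangle \leq 0\}$.

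Next, using $\langle x, x\rangle = \langle w, w\rangle = -1$ and $\langle x, w\rangle = 0$, the curve $\gamma(t) = \cos(t)\,x - \sin(t)\,w$ is a unit-speed timelike geodesic. I will trace it via the description of $\Omega_{X,Y}$ as the connected component of $\mb{H}^{2,1} \setminus \bigcup_{\zeta \in \Lambda_{X,Y}} \{\langle \zeta, \cdot\rangle = 0\}$ containing $x$. Normalizing representatives so that $\langle \zeta, x\rangle > 0$ on the connected set $\Lambda_{X,Y}$, the supporting property of $H$ forces $\langle \zeta, w\rangle \leq 0$ on $\Lambda_{X,Y}$, with equality precisely when $\zeta \in H$. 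The identity
\[
\langle \zeta, \gamma(t)\rangle = \cos(t)\,\langle \zeta, x\rangle - \sin(t)\,\langle \zeta, w\rangle
\]
then exhibits a sum of non-negative terms, with the first strictly positive on $[0, \pi/2)$; this keeps $\gamma(t) \in \Omega_{X,Y}$ on that interval. At $t = \pi/2$ one has $\gamma(\pi/2) = -w$, which represents the same projective point as $w$, and for any $\zeta \in H \cap \Lambda_{X,Y}$ the relation $\langle \zeta, w\rangle = 0$ places $w$ on $\partial\Omega_{X,Y}$.

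For the last assertion, I will observe that for two distinct supporting planes $H_1, H_2$ of $\partial^+\mc{CH}_{X,Y}$ the identity $(V_1 \cap V_2)^\perp = \mathrm{Span}(w_1, w_2)$ reduces the two claims to a single signature computation: since $V_1 \cap V_2$ and $\mathrm{Span}(w_1, w_2)$ are complementary subspaces of the $(2,2)$ form, both ``$H_1 \cap H_2$ spacelike'' and ``$[w_1, w_2]$ spacelike'' amount to $\mathrm{Span}(w_1, w_2)$ having signature $(1,1)$, i.e., $|\langle w_1, w_2\rangle| > 1$. I split into cases. If $H_1$ and $H_2$ share a contact point with $\mc{CH}_{X,Y}$, then by Proposition \ref{pro:bending locus} that point lies on a bending leaf $\ell$ of $\lambda^+$ with $H_1, H_2$ the two spacelike plaque planes adjacent to $\ell$; then $H_1 \cap H_2 \supseteq \ell$, and dimension counting forces equality. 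Otherwise I pick distinct $x_i \in H_i \cap \mc{CH}_{X,Y}$, use the strict supporting inequality $\langle x_j, w_i\rangle < 0$ for $i \neq j$, and push the geodesic $\gamma_1(t) = \cos(t)\,x_1 - \sin(t)\,w_1$ from the second step all the way to $t = \pi/2$, comparing against $H_2$ through
\[
\langle \gamma_1(t), w_2\rangle = \cos(t)\,\langle x_1, w_2\rangle - \sin(t)\,\langle w_1, w_2\rangle.
\]
The main obstacle is precisely this sign analysis: one must exploit that $H_1, H_2$ support the same connected boundary piece $\partial^+\mc{CH}_{X,Y}$ so that their outward normals point coherently into the same component of $\mb{H}^{2,1} \setminus \Omega_{X,Y}$, thereby ruling out $|\langle w_1, w_2\rangle| \leq 1$. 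If the direct sign-tracking proves delicate, a fallback is to propagate the spacelike property from the ``adjacent leaf'' case along a continuous path in the dual $\mb{R}$-tree of $\lambda^+$, where each elementary step is covered by the shared-contact-point case.
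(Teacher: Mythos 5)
Your second bullet is essentially the paper's own argument (lift $\Lambda_{X,Y}$ so that $\langle\cdot,x\rangle$ has constant sign, use the supporting property to control the sign of $\langle\cdot,w\rangle$, and read off the sign of $\langle\cdot,\gamma(t)\rangle$), and it is correct. The first bullet, however, has a genuine gap. Your reduction ``$H$ contains a plaque or is a rotation limit of plaque planes about a leaf, hence is spacelike'' does not cover the essential case: a supporting plane that touches $\partial^\pm\mc{CH}_{X,Y}$ only along a bending leaf $\ell$ (for instance at an isolated leaf of $\lambda^\pm$, where there is a whole closed arc of supporting planes interpolating between the two adjacent plaque planes) is in general not a limit of plaque planes. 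The planes containing a fixed spacelike geodesic $\ell=\mb{P}(W)$ are parametrized by $\mb{P}(W^\perp)\cong\mb{RP}^1$, and since $W^\perp$ has signature $(1,1)$ this circle of planes contains spacelike, lightlike and timelike members; knowing that the two extreme supporting planes are spacelike does not by itself prevent the connected arc of supporting planes from running through the non-spacelike region. The paper closes exactly this hole at the ideal boundary: for a lightlike plane $\partial H$ is $\{t\}\times\mb{RP}^1$ or $\mb{RP}^1\times\{t\}$, and for a signature-$(1,1)$ plane it is the graph of an orientation-reversing map, so in either case $\partial H$ crosses the graph $\Lambda_{X,Y}$ of the orientation-preserving homeomorphism $h_{X,Y}$ transversely, contradicting the supporting property. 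Some version of this argument is unavoidable and you need to add it.

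For the last assertion, your observation that $(V_1\cap V_2)^\perp={\rm Span}(w_1,w_2)$, so that ``$H_1\cap H_2$ is a spacelike geodesic'' and ``$[w_1,w_2]$ is spacelike'' are one and the same signature statement, is correct and usefully makes explicit a duality the paper leaves implicit. But you do not actually prove that statement: your case (b) sets up $\langle\gamma_1(t),w_2\rangle=\cos(t)\langle x_1,w_2\rangle-\sin(t)\langle w_1,w_2\rangle$ and then defers the sign analysis as ``the main obstacle'', and the $\mb{R}$-tree fallback is only a sketch, so the claim $|\langle w_1,w_2\rangle|>1$ is left open. Note that half of the work comes for free and requires no sign-tracking: $V_1\cap V_2$ is a $2$-plane inside $V_1$, which has signature $(2,1)$, so its signature is $(2,0)$, $(1,1)$ or degenerate, and only $(1,1)$ meets $\mb{H}^{2,1}$; hence $H_1\cap H_2$ is automatically a spacelike geodesic whenever it is nonempty, and the only thing to prove is $H_1\cap H_2\neq\emptyset$. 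The paper handles this by a separation argument: if $H_1\cap H_2=\emptyset$, then $H_1$ and $H_2$ lie on opposite sides of $\mc{CH}_{X,Y}$ inside the component of $\mb{H}^{2,1}-(H_1\cup H_2)$ containing it, so they cannot both be supporting planes of the same boundary component $\partial^+\mc{CH}_{X,Y}$. You should either supply an argument of this kind or complete your sign analysis; as written, the third assertion is not established.
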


\begin{proof}
The first point: Recall that $\partial\mb{H}^{2,1}=\mb{RP}^1\times\mb{RP}^1$ and that $\Lambda_{X,Y}$ is the graph of an orientation preserving homeomorphism $h_{X,Y}:\mb{RP}^1\to\mb{RP}^1$. If $H$ is a supporting hyperplane for $\mc{CH}_{X,Y}$ then $\partial H$ does not intersect $\Lambda_{X,Y}$ transversely. The fact that $H$ must be spacelike follows from the following observations: The boundary of a lightlike plane has the form $\{t\}\times\mb{RP}^1$ or $\mb{RP}^1\times\{t\}$. The boundary of a totally geodesic plane in $\mathbb{H}^{2,1}$ with signature $(1,1)$ is the graph of an orientation reversing linear transformation $\mb{RP}^1\to\mb{RP}^1$. In both cases the boundary intersects $\Lambda_{X,Y}$ transversely.

The second point: Recall that $\Omega_{X,Y}$ is the set of points that can be connected to every point in $\Lambda_{X,Y}$ by a spacelike geodesic. A point $x\in\mb{H}^{2,1}$ and a point $p\in\partial\mb{H}^{2,1}$ are connected by a spacelike geodesic if and only if $\langle x,p\rangle\neq 0$. Let us show that $\gamma(t)\in\Omega_{X,Y}$ for every $t\in[0,\pi/2)$. In order to do so, lift $\Lambda_{X,Y}$ continuously to $M_2(\mb{R})$. As $x\in\Omega_{X,Y}$, we have $\langle x,p\rangle\neq 0$ for every $p\in\Lambda_{X,Y}$ and, by continuity, we can assume that it is negative for every $p\in\Lambda_{X,Y}$. As $H$ is a supporting hyperplane and $w$ is timelike, orthogonal to $H$, and pointing outside $\mc{CH}_{X,Y}$, we have $\langle p,w\rangle\ge 0$ for every $p\in\Lambda_{X,Y}$. Therefore $\langle\gamma(t),p\rangle=\cos(t)\langle x,p\rangle-\sin(t)\langle w,p\rangle<0$ for every $p\in\Lambda_{X,Y}$ and $t<\pi/2$. In order to conclude, it is enough to observe that $w=\gamma(\pi/2)\not\in\Omega_{X,Y}$ as $\langle w,p\rangle=0$ for every $p\in\partial H\cap\Lambda_{X,Y}\neq\emptyset$.

For the last part notice that $H_1\cap H_2$ is either empty or a spacelike geodesic. Suppose that $H_1\cap H_2=\emptyset$. Then $\mb{H}^{2,1}-(H_1\cup H_2)$ consists of two connected components one of them containing $\mc{CH}_{X,Y}$. As $H_1,H_2$ lie on opposite sides of $\mc{CH}_{X,Y}$ in such component, they cannot be supporting hyperplanes for the same boundary component of $\partial\mc{CH}_{X,Y}$. This is a contradiction.
\end{proof}

Notice that, by Lemma \ref{lem:supporting plane}, the initial and terminal singularities $\mc{S}^\pm$ are $\rho_{X,Y}(\Gamma)$-invariant, acausal, and contained in $\partial\Omega_{X,Y}$. Benedetti and Guadagnini \cite{BG} prove that they have the structure of a $\mb{R}$-tree and relate them to the bending laminations $\lambda^\pm$.

\begin{dfn}[$\mb{R}$-tree]
A {\em $\mb{R}$-tree} is a geodesic metric space $(\mc{S},d_{\mc{S}}(\bullet,\bullet))$ such that between two points $x,y\in\mc{S}$ there is a unique (up to reparametrization) injective path $\alpha:[0,1]\to\mc{S}$ with $\alpha(0)=x,\alpha(1)=y$.
\end{dfn}

Benedetti and Guadagnini \cite{BG} show the following:

\begin{pro}
\label{pro:singularities}
Let $\rho_{X,Y}$ be a Mess representation. Let $\mc{S}^\pm\subset\partial\Omega_{X,Y}$ be the initial and terminal singularities. Then: 
\begin{itemize}
\item{$\mc{S}^\pm$ is $\rho_{X,Y}(\Gamma)$-invariant, acausal, and path connected by regular paths. In particular, it has an intrinsic path metric 
\[
d_{\mc{S}^\pm}(x,y)=L(\alpha)
\]
where $\alpha:[0,1]\to\mc{S}^\pm$ is a regular path joining $x$ to $y$.}
\item{For every pair of points $w,w'\in\mc{S}^\pm$ there is a unique continuous injective path connecting them.}
\item{For every $\gamma\in\Gamma-\{1\}$, the minimal displacement 
\[
\min_{x\in\mc{S}^\pm}\{d_{\mc{S}^\pm}(x,\rho_{X,Y}(\gamma)x)\}
\]
coincides with $i(\gamma,\lambda^\pm)$ and is realized by some point $x\in\mc{S}^\pm$.}
\end{itemize} 
Here $\lambda^\pm\in\mc{ML}$ is the bending lamination of $\partial^\pm\mc{CH}_{X,Y}$ and $i(\bullet,\bullet)$ is the geometric intersection form.
\end{pro}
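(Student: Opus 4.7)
The plan is to identify $\mc{S}^\pm$, $\rho_{X,Y}(\Gamma)$-equivariantly and isometrically, with the dual $\mb{R}$-tree $T^\pm$ of the measured lamination $\lambda^\pm$ on the intrinsic hyperbolic surface $Z_{\lambda^\pm}$, using the projective duality induced by the signature-$(2,2)$ form $\langle\bullet,\bullet\rangle$ on $M_2(\mb{R})$: a supporting plane of $\partial^\pm\mc{CH}_{X,Y}$ (necessarily spacelike, by Lemma \ref{lem:supporting plane}) is sent to its dual point in $\mc{S}^\pm$. Once this identification is in hand, items (i)--(iii) follow from classical properties of dual trees of measured laminations together with Lemma \ref{lem:supporting plane}.

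Concretely, I would define $\phi:T^\pm\to\mc{S}^\pm$ as follows. To a plaque $P$ of the lift of $\lambda^\pm$, whose pleated image $\widehat P\subset\partial^\pm\mc{CH}_{X,Y}$ spans a unique spacelike plane $H_P=\mb{P}(V_P)$, I associate $\phi(P):=\mb{P}(V_P^\perp)\in\mc{S}^\pm$. To a point on a leaf $\ell$ of the bending sublamination, sitting between two adjacent plaques $P_1,P_2$, I associate the corresponding point of the spacelike geodesic $\ell^*$ dual to $\mathrm{Span}\,\widehat\ell$, interpolating between $\phi(P_1)$ and $\phi(P_2)$ proportionally to the bending measure. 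Equivariance is automatic since $\langle\bullet,\bullet\rangle$ is $\mathrm{PSL}_2(\mb{R})\times\mathrm{PSL}_2(\mb{R})$-invariant, and surjectivity holds because every supporting plane of $\mc{CH}_{X,Y}$ meets $\partial^\pm\mc{CH}_{X,Y}$ along a plaque or leaf of $\lambda^\pm$. The key technical input is a direct computation in the hyperboloid model showing that the spacelike distance between two adjacent dual points $\phi(P_1),\phi(P_2)$ along $\ell^*$ equals the exterior dihedral angle along $\ell$, i.e.\ the bending measure at $\ell$. Combined with Lemma \ref{lem:supporting plane} (distinct supporting planes give distinct, spacelike-related dual points) and continuity of the boundary map $\xi$, this makes $\phi$ a homeomorphism that transports the transverse-measure metric on $T^\pm$ to the path metric on $\mc{S}^\pm$.

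With $\phi$ in place, the three items follow directly. For (i), $\Gamma$-invariance is equivariance of $\phi$, acausality is the last sentence of Lemma \ref{lem:supporting plane}, and path connectedness via regular paths is realized by concatenating spacelike subarcs on successive $\ell^*$ along any transverse arc to $\lambda^\pm$, with total length equal to the transverse measure. For (ii), the dual tree $T^\pm$ of a measured lamination on a simply connected surface is an $\mb{R}$-tree, hence uniquely geodesic between any two points. For (iii), it is classical that the minimal displacement of $\gamma$ on the dual tree of $\lambda^\pm$ equals $i(\gamma,\lambda^\pm)$, realized along the image in $T^\pm$ of the axis of $\rho_{\lambda^\pm}(\gamma)$ on $\mb{H}^2=Z_{\lambda^\pm}$. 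The main obstacle I anticipate is continuity of $\phi$ across accumulating, non-isolated bending leaves: one must verify that dual points of supporting planes along a minimal sublamination with no isolated leaves assemble into a continuous subarc of $\mc{S}^\pm$ parametrized by the bending measure, with flat (non-bent) leaves collapsing to single points. This will hinge on continuity of $\xi$ and on the fact that the interpolation along each $\ell^*$ is parametrized by the bending measure itself rather than by any purely combinatorial data.
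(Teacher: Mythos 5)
The paper does not actually prove Proposition \ref{pro:singularities}: it states the result and defers entirely to \cite{BG} and \cite{BB09}, so there is no internal argument to compare yours against. What you have written is, in outline, the argument underlying those references: the duality $\mb{P}(V)\leftrightarrow\mb{P}(V^\perp)$ identifies $\mc{S}^\pm$ with the dual $\mb{R}$-tree of the lift of $\lambda^\pm$, equivariance is immediate from the invariance of $\langle\bullet,\bullet\rangle$, acausality and the spacelike relation between distinct dual points come from Lemma \ref{lem:supporting plane}, and items (ii) and (iii) then reduce to standard facts about dual trees of measured laminations (unique arcs between points; translation length equal to $i(\gamma,\lambda^\pm)$, attained because an isometry of an $\mb{R}$-tree is either elliptic or has an invariant axis). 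Your key computation is also correct: for two supporting planes $\mb{P}(V_1),\mb{P}(V_2)$ meeting along a spacelike line, both the boost (dihedral) angle along the line and the distance between the dual points along $\ell^*$ are computed by $|\langle w_1,w_2\rangle|$ for unit timelike vectors $w_1\in V_1^\perp$, $w_2\in V_2^\perp$ lying in the signature-$(1,1)$ plane $(V_1\cap V_2)^\perp$, so the two quantities agree.

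The place where your write-up remains a plan rather than a proof is exactly the point you flag, and it is the actual content of \cite{BB09}. Two things are asserted but not established. First, that $\phi$ is well defined, continuous and surjective across a minimal sublamination of $\lambda^\pm$ without atoms: the supporting plane along a non-isolated leaf is a limit of supporting planes of nearby plaques, and showing that the dual points sweep out an arc of $\mc{S}^\pm$ parametrized by the transverse measure requires a finite-approximation argument for the bending cocycle, not merely continuity of $\xi$. Second, that the intrinsic path metric $d_{\mc{S}^\pm}$ of the statement, defined through lengths of weakly regular paths in $\mb{H}^{2,1}$, coincides with the transverse-measure metric you transport through $\phi$: the inequality bounding $d_{\mc{S}^\pm}$ from above by the total bending mass follows from your elementary computation on finite chains of supporting planes, but the reverse inequality needs the reverse triangle inequality for consecutive dual points on a monotone chain (this is Lemma 6.3.5 of \cite{BB09}, used in the paper in Claim 2 in the proof of Proposition \ref{pro:lengthB}). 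Supplying these two steps, or citing them precisely, would close the argument; as written, the hardest part of the proposition is still outsourced to an anticipated obstacle.
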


For a proof we refer to \cite{BG} and \cite{BB09}.

\section{A generalization of a result of Mess}
\label{sec:sec4}

The goal of the section is to define the shear-bend cocycles of pleated surfaces and prove Theorem \ref{thm:pleated surfaces ads}.

We begin by recalling the Thurston-Bonahon shear parametrization of Teichmüller space (as discussed by Bonahon in \cite{Bo96}) which we will generalize to the space of Mess representations in Theorem \ref{thm:shear-bend ads} at the end of the section.

\subsection{Shear coordinates}

We refer to Bonahon \cite{Bo96} for more details on the material presented in this section.

\subsubsection{Transverse cocycles}
Shear-bend cocycles are a special case of transverse cocycles for $\lambda$.

\begin{dfn}[Transverse Cocycle]
Let $\mb{A}$ be a commutative ring. Let $\lambda\subset\mb{H}^2$ be a maximal lamination. An {\em $\mb{A}$-transverse cocycle} for $\lambda$ is a function $\sigma(\bullet,\bullet)$ of pairs of plaques satisfying the following properties:
\begin{itemize}
\item{Invariance: $\sigma(\gamma P,\gamma Q)=\sigma(P,Q)$ for every $\gamma\in\Gamma$ and plaques $P,Q$.}
\item{Symmetry: $\sigma(P,Q)=\sigma(Q,P)$ for every plaques $P,Q$.}
\item{Additivity: $\sigma(P,R)=\sigma(P,Q)+\sigma(Q,R)$ for every plaques $P,Q,R$ such that $R$ separates $P$ from $Q$.}
\end{itemize}

The space of $\mb{A}$-transverse cocycles is denoted by $\mc{H}(\lambda;\mb{A})$. It has a natural structure of $\mb{A}$-module, which is isomorphic to $\mb{A}^{-3\chi(\Sigma)}$ whenever $\mathbb{A}$ has no $2$-torsion (see Bonahon \cite{Bo96}).
\end{dfn}

\subsubsection{Measured laminations}
Every measured lamination $\mu\in\mc{ML}_\lambda$ determines a natural transverse cocycle which, with a little abuse of notation, we will still denote by $\mu\in\mc{H}(\lambda;\mb{R})$. It is defined as follows: Let $P,P'$ be plaques of $\lambda$. Let $\ell \subset \partial P, \ell' \subset \partial P'$ be the (oriented) edges that separate $P,P'$. Then  
\[
\mu(P,P'):=\mu([\ell,\ell']),
\]
the measure, determined by $\mu$, of the box $[\ell,\ell']\subset\mc{G}$ consisting of those geodesics separating $\ell$ and $\ell'$.
 
\subsubsection{Hyperbolic structures}
Every hyperbolic structure $X$ on $\Sigma$ also determines a transverse cocycle $\sigma_\lambda^X\in\mc{H}(\lambda;\mb{R})$, the so-called {\em shear cocycle} of $X$. It is defined as follows: Let $P,P'$ be plaques of $\lambda$. Let $\ell\subset \partial P,\ell'\subset\partial P'$ be the (oriented) edges that separate $P,P'$. Denote by $x\in\ell,x'\in\ell'$ the orthogonal projections of the opposite vertices in $P,P'$. 

Consider the partial foliation $\lambda_{PP'}$ of the region $[\ell,\ell']$ bounded by $\ell,\ell'$ given by all the leaves that separate $P$ from $P'$ and note that $[\ell,\ell']-\lambda_{PP'}$ is a union of wedges, that is regions bounded by a pair of leaves of $\lambda_{PP'}$ that are asymptotic in one or the other direction. Each of the wedges can be foliated by adding all the geodesics separating the boundary leaves and to their common endpoint at infinity. Thus, we get a natural geodesic foliation of $[\ell,\ell']$. The line field on $[\ell,\ell']$ which is orthogonal to this foliation is integrable and following its leaves provides a natural isometric identification $\pi:\ell\to\ell'$. Define
\[
\sigma_\lambda^X(P,P'):=d_{\ell'}(\pi(x),x')
\]
where $d_{\ell'}$ is the signed distance along $\ell'$. 

A straightforward computation in $\mb{H}^2$ shows the following:

\begin{lem}
\label{lem:elementary case}
Let $\beta^\mb{R}$ be the cross ratio on $\mb{RP}^1$. We have
\begin{itemize}
\item{If $P,P'$ are adjacent triangles and $\ell=\ell'$, then
\[
\sigma_\lambda^X(P,P')= \log ( - \beta^\mb{R}(\ell^+,\ell^-,v,v'))
\]
where $v, v'$ are the ideal vertices of $P, P'$ opposite to $\ell=\ell'$, respectively, and $\ell$ is oriented so that $P$ and $P'$ lie on its left and on its right, respectively.}
\item{If $P,P'$ lie on opposite sides of a leaf $\ell\subset\lambda$ and each shares an ideal vertex with $\ell$, then  
\[
\sigma_\lambda^X(P,P')= \log( \beta^\mb{R}(\ell^+,u,v,\ell^-) \, \beta^\mb{R}(\ell^-,\ell^+,u',u) \, \beta^\mb{R}(\ell^-,u',v',\ell^+) )
\]
where: $u, u'$ are the ideal vertices of the sides $e\subset P,e'\subset P'$ that are not endpoints of $\ell$ and that separate the plaques $P,P'$; $v, v'$ are the vertices of $P, P'$ opposite to $e, e'$; $\ell$ is oriented so that $P$ and $P'$ lie on its left and on its right, respectively.}
\end{itemize}
\end{lem}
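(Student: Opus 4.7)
The two identities reduce to direct computations in the upper half-plane model of $\mb{H}^2$, exploiting the triple transitivity of ${\rm PSL}_2(\mb{R})$ on $\mb{RP}^1$ and the Möbius invariance of $\beta^\mb{R}$. For case 1, since $P$ and $P'$ are adjacent across the single edge $\ell=\ell'$, the orthogonal identification $\pi:\ell\to\ell'$ is the identity, and the shear equals the signed hyperbolic distance along $\ell$ from the perpendicular foot $x$ of $v$ to that of $v'$. Normalize so that $\ell^-=0$ and $\ell^+=\infty$, making $\ell$ the positive imaginary axis; then $v,v'\in\mb{R}$ lie on opposite sides of $0$. The perpendicular geodesic from an ideal point $r\in\mb{R}$ to $\ell$ is the Euclidean semicircle of radius $|r|$ centered at the origin, which meets $\ell$ at height $|r|$. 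Hence $x$ lies at height $|v|$ and $x'$ at height $|v'|$, so the signed hyperbolic distance from $x$ to $x'$ along $\ell$ (oriented toward $\ell^+$) equals $\log(|v'|/|v|)=\log(-v'/v)$. A quick cross-ratio calculation yields $\beta^\mb{R}(\infty,0,v,v')=v'/v$, so $-\beta^\mb{R}(\ell^+,\ell^-,v,v')=-v'/v$, matching the desired formula.

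For case 2, the plan is to reduce to three instances of case 1 by inserting auxiliary ideal triangles. Set $T_1:=\Delta(\ell^+,u,\ell^-)$ and $T_2:=\Delta(\ell^+,\ell^-,u')$; both are ideal triangles sharing the leaf $\ell$ as a common edge, and together with $P,P'$ they tile the relevant strip between $P$ and $P'$. Since the shear depends only on the hyperbolic geometry between consecutive plaques (and not on the ambient lamination outside this strip), one may either extend the cocycle to the refinement of $\lambda$ obtained by adding the new edges of $T_1,T_2$, or directly interpret the construction as a function of ordered pairs of oriented geodesics. In either reading, additivity yields
\[
\sigma_\lambda^X(P,P')=\sigma_\lambda^X(P,T_1)+\sigma_\lambda^X(T_1,T_2)+\sigma_\lambda^X(T_2,P').
\]
Each summand is an instance of case 1: $(P,T_1)$ shares the edge $(\ell^+,u)$ with opposite vertices $v$ and $\ell^-$; $(T_1,T_2)$ shares the edge $\ell=(\ell^+,\ell^-)$ with opposite vertices $u$ and $u'$; $(T_2,P')$ shares the edge $(\ell^-,u')$ with opposite vertices $\ell^+$ and $v'$. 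Applying case 1 to each and rearranging arguments via symmetries of the cross-ratio, such as $\beta^\mb{R}(a,b,c,d)=\beta^\mb{R}(b,a,d,c)$, produces the stated triple product.

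The main obstacle I anticipate is the careful tracking of orientations and signs in case 2: each intermediate shared edge must be oriented so that the earlier plaque in the chain lies on its left (in the sense of the case 1 convention), and the resulting signed cross-ratios must then be matched precisely to the specific argument orderings in the statement. This is pure bookkeeping, but it is where mistakes are most likely. A secondary subtlety is justifying that refining $\lambda$ by the edges of $T_1,T_2$ does not change the shears between the original plaques; this follows from the local character of the orthogonal identification $\pi$, but needs to be stated explicitly. Once the orientation conventions of case 1 are applied consistently across all three pieces, the identity of case 2 follows.
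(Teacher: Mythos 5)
Your overall strategy is the natural one, and it is consistent with what the paper intends: the paper gives no proof of this lemma beyond calling it ``a straightforward computation in $\mb{H}^2$'' and citing Lemma 4.11 of \cite{MV22a} for the second assertion, and the reduction of the second bullet to three instances of the first via the auxiliary triangles $T_1=\Delta(\ell^+,u,\ell^-)$ and $T_2=\Delta(\ell^+,\ell^-,u')$ is exactly the expected route. Your treatment of the first bullet is complete and correct.

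For the second bullet, however, there are two points you have not actually closed. The lesser one is the justification of the additivity $\sigma_\lambda^X(P,P')=\sigma(P,T_1)+\sigma(T_1,T_2)+\sigma(T_2,P')$: in the configuration where this case is used, there are in general infinitely many leaves of $\lambda$ between $e$ and $\ell$ (and between $\ell$ and $e'$), all spiraling onto $\ell$, so $T_1,T_2$ are not plaques and cocycle additivity does not apply verbatim. The correct reason the reduction works is that every leaf of $\lambda$ separating $P$ from $P'$ and lying between $e$ and $\ell$ is asymptotic to the shared endpoint $\ell^+$ (and every one between $\ell$ and $e'$ is asymptotic to $\ell^-$), so the canonical geodesic foliation of the strip $[e,e']$ determined by $\lambda$ coincides with the one determined by the three-leaf configuration $\{e,\ell,e'\}$; ``locality of $\pi$'' alone does not give this. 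The more serious point is that the sign bookkeeping you correctly flag as the main risk does not come out as you assert. Normalize $\ell^-=0$, $\ell^+=\infty$, so that the six points sit in the cyclic order $v<u<\ell^-<v'<u'<\ell^+$. With the paper's cross-ratio $\beta^\mb{R}(z_1,z_2,z_3,z_4)=\frac{(z_1-z_3)(z_2-z_4)}{(z_1-z_4)(z_2-z_3)}$ one finds $\beta^\mb{R}(\ell^+,u,v,\ell^-)=\frac{u}{u-v}<0$, $\beta^\mb{R}(\ell^-,\ell^+,u',u)=\frac{u'}{u}<0$, and $\beta^\mb{R}(\ell^-,u',v',\ell^+)=\frac{-v'}{u'-v'}<0$: all three factors are negative, so their product is negative and the stated expression $\log(F_1F_2F_3)$ is not even defined, whereas your three applications of the first bullet correctly produce $\log\bigl((-F_1)(-F_2)(-F_3)\bigr)=\log(-F_1F_2F_3)$. (Sanity check: for $v=-2$, $u=-1$, $v'=1/2$, $u'=1$ each elementary shear vanishes by symmetry, and indeed $F_1F_2F_3=-1$.) So either the displayed formula carries an implicit sign convention different from the one fixed for $\beta^\mb{B}$, or it is missing an overall minus sign; in either case your concluding sentence, that the three pieces ``produce the stated triple product,'' is precisely the step that fails and must be resolved explicitly rather than asserted.
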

(See e.g. \cite{MV22a}*{Lemma 4.11} for a proof of the second assertion.) Bonahon proves the following:

\begin{thm}[Theorems A and B of \cite{Bo96}]
\label{thm:thurston bonahon}
Let $\lambda$ be a maximal lamination. For every $X\in\T$ the function $\sigma_\lambda^X(\bullet,\bullet)$ is a transverse cocycle. The map
\[
\begin{array}{c}
\Phi:\T\to\mc{H}(\lambda;\mb{R})\\
X\to\sigma_\lambda^X\\
\end{array}
\]
is a real analytic diffeomorphism. The image $\Phi(\T)$ is the open convex cone 
\[
\Phi(\T)=\{\sigma\in\mc{H}(\lambda,\mb{R})\left|\;\omega_{{\rm Th}}(\sigma,\bullet)>0\text{ \rm on }\mc{ML}_\lambda\right.\}
\]
where $\omega_{{\rm Th}}(\bullet,\bullet)$ is the {\rm Thurston's symplectic form} on $\mc{H}(\lambda;\mb{R})$.
\end{thm}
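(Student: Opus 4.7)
The plan is to proceed in four stages: verify that $\sigma_\lambda^X$ is a transverse cocycle, establish analyticity of $\Phi$, prove a length-shear identity to locate the image inside the Thurston cone, and finally build an explicit inverse on the cone to close the diffeomorphism statement.

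\textbf{Cocycle property and analyticity.} $\Gamma$-invariance of $\sigma_\lambda^X$ is immediate from the $\rho_X(\Gamma)$-equivariance of the defining construction, and symmetry follows from a careful bookkeeping of the induced orientations when one swaps the roles of $P$ and $P'$. The only nontrivial property is additivity for three plaques $P,Q,R$ with $Q$ separating $P$ from $R$: the orthogonal line field used to define $\sigma_\lambda^X(P,R)$ subdivides into the line fields used for $\sigma_\lambda^X(P,Q)$ and $\sigma_\lambda^X(Q,R)$, so after parallel-transporting along these integrable line fields and adding the two quantities, the orthogonal projections of the vertices of $Q$ onto its separating leaves cancel. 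For analyticity, Lemma \ref{lem:elementary case} expresses $\sigma_\lambda^X$ on adjacent plaques as a logarithm of a cross-ratio of points in the image of $\xi_X$, and $\xi_X:\partial\Gamma\to\mb{RP}^1$ varies real analytically in $X\in\T$.

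\textbf{Length identity and image in the cone.} The central computation is
\[
L_X(\mu)=\omega_{\rm Th}(\sigma_\lambda^X,\mu)\qquad\text{for every }\mu\in\mc{ML}_\lambda.
\]
For $\mu$ a weighted simple closed geodesic contained in $\lambda$ the identity reduces, via the first case of Lemma \ref{lem:elementary case}, to realizing $L_X(\mu)$ as a telescoping sum of shears over the plaques traversed by $\mu$. The general case follows by continuity in $\mu$ and density of weighted simple closed curves contained in $\lambda$ inside $\mc{ML}_\lambda$. Strict positivity of length then yields $\omega_{\rm Th}(\sigma_\lambda^X,\cdot)>0$ on $\mc{ML}_\lambda\setminus\{0\}$, so $\Phi(\T)$ is contained in the Thurston cone.

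\textbf{Surjectivity via an explicit inverse.} This is the main obstacle. The plan is, given $\sigma\in\mc{H}(\lambda;\mb{R})$ satisfying the positivity hypothesis, to realize each plaque of $\lambda$ abstractly as an ideal triangle in $\mb{H}^2$ and glue adjacent plaques along their common edge using the shift prescribed by $\sigma$. Fixing a basepoint plaque $P_0$, a developing map $D_\sigma$ is assembled by composing elementary gluings along a finite chain of plaques; the cocycle condition makes the construction independent of the chain, and $\Gamma$-invariance of $\sigma$ produces a representation $\rho_\sigma:\Gamma\to{\rm PSL}_2(\mb{R})$. The delicate step is to show that, under the positivity condition, $D_\sigma$ extends to a $\rho_\sigma$-equivariant homeomorphism of $\mb{H}^2$ and that $\rho_\sigma$ is discrete and faithful: any failure mode (wedges of $\lambda$ collapsing, plaques accumulating on a proper sublamination, image missing an open set) produces a measured sublamination $\mu\in\mc{ML}_\lambda$ on which the Thurston pairing $\omega_{\rm Th}(\sigma,\mu)$ vanishes or becomes non-positive, contradicting the hypothesis. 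Setting $\Psi(\sigma):=\mb{H}^2/\rho_\sigma(\Gamma)\in\T$ provides the desired inverse with $\Phi\circ\Psi=\operatorname{id}$ on the cone.

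\textbf{Diffeomorphism conclusion.} Injectivity of $\Phi$ is then automatic from $\Psi\circ\Phi=\operatorname{id}_\T$, which is a direct unwinding of the definitions: shearing and re-gluing the plaques of $X$ with parameters $\sigma_\lambda^X$ reconstructs $X$. The analyticity of the inverse is clear because the elementary gluings are hyperbolic isometries depending analytically on the shift parameters. Combined with the dimension count $\dim\T=-3\chi(\Sigma)=\dim\mc{H}(\lambda;\mb{R})$ and invariance of domain applied to the continuous injection $\Phi$, we conclude that $\Phi$ is a real analytic diffeomorphism from $\T$ onto the Thurston cone, as claimed.
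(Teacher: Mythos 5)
First, a point of reference: the paper does not prove this statement at all --- it is quoted as Theorems A and B of Bonahon's paper \cite{Bo96} and used as a black box, so there is no internal argument to compare yours against. Your sketch does follow the standard Thurston--Bonahon route (cocycle verification, the length identity $L_X(\mu)=\omega_{\rm Th}(\sigma_\lambda^X,\mu)$ to place the image inside the cone, and an inverse built by gluing ideal triangles with prescribed shears), and at the level of architecture this is the right plan.

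As a proof, however, it has genuine gaps exactly where Bonahon's paper does the real work. (1) Your additivity and analyticity arguments implicitly treat $\sigma_\lambda^X(P,P')$ as a finite telescoping sum of logarithms of cross-ratios, which is only valid when the plaques are separated by finitely many leaves. For a general maximal lamination the set of separating leaves is uncountable; the cocycle is defined via the orthogonal line field (equivalently as a limit of finite approximations), and both additivity and real-analytic dependence on $X$ require uniform H\"older-type estimates on the divergence of nearby leaves to control that limit. This is not a formality --- it occupies a substantial part of \cite{Bo96}. (2) The surjectivity step is asserted rather than proved. The claim that every failure mode of the developing map $D_\sigma$ (non-injectivity, image a proper subset of $\mb{H}^2$, non-discreteness of $\rho_\sigma$) produces a measured sublamination $\mu\in\mc{ML}_\lambda$ with $\omega_{\rm Th}(\sigma,\mu)\le 0$ is precisely the content of the hard direction of Bonahon's Theorem B, and extracting such a $\mu$ from a degeneration requires a properness and compactness argument on the cone that you do not supply. (3) Minor: once you have analytic two-sided inverses, invariance of domain is superfluous, and injectivity of $\Phi$ need not be deferred to the construction of $\Psi$, since the cross-ratio formulas of Lemma \ref{lem:elementary case} recover the holonomy (up to conjugation) directly from the cocycle. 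None of this makes your outline wrong, but as written it is a roadmap to Bonahon's proof rather than a proof.
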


The resulting set of coordinates for Teichmüller space are called {\em shear coordinates} relative to $\lambda$. 

The Thurston's symplectic form $\omega_{{\rm Th}}(\bullet,\bullet)$ is a natural symplectic form on the vector space $\mc{H}(\lambda;\mb{R})$. For our purposes we don't need a precise definition of this object (we refer to Bonahon \cite{Bo96} for details), as we will only use the following property:

\begin{thm}[Theorem E of \cite{Bo96}]
\label{thm:symplectic length}
Let $\lambda$ be a maximal lamination. Let $\omega_{{\rm Th}}(\bullet,\bullet)$ is the Thurston's symplectic form on $\mc{H}(\lambda;\mb{R})$. Then, for every $\mu\in\mc{ML}_\lambda$ and $X\in\T$ we have
\[
\omega_{{\rm Th}}(\sigma_\lambda^X,\mu)=L_X(\mu).
\]
\end{thm}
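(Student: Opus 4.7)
The plan is to reduce the identity $\omega_{\rm Th}(\sigma_\lambda^X,\mu)=L_X(\mu)$ to the case of weighted simple closed multicurves via density and continuity, and then verify it by a combinatorial computation on a train track carrying $\lambda$. First I would choose a train track $\tau$ carrying $\lambda$, so that elements of $\mc{H}(\lambda;\mb{R})$ correspond to real edge-weight systems on the branches of $\tau$ subject to the switch relations, and so that the cone $\mc{ML}_\lambda\subset\mc{H}(\lambda;\mb{R})$ corresponds precisely to non-negative weight systems. In these train-track coordinates Thurston's symplectic form $\omega_{\rm Th}$ admits the standard combinatorial expression as a finite antisymmetric sum over the switches of $\tau$, while the shear cocycle $\sigma_\lambda^X$ is the weight system whose value on a branch is the shear between the two plaques of $\lambda$ adjacent to it, as computed via Lemma \ref{lem:elementary case}.

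Both sides of the identity are continuous and linear in $\mu$: the length function extends continuously to geodesic currents (via the Liouville current construction already discussed), and $\omega_{\rm Th}$ is bilinear on $\mc{H}(\lambda;\mb{R})$. Since weighted simple closed multicurves carried by $\tau$ are dense in $\mc{ML}_\lambda$, it suffices to verify the identity when $\mu=c\gamma$ is such a weighted multicurve. I would then compute $L_X(\gamma)$ geometrically, by straightening $\gamma$ to its geodesic representative in $X$ and tracking how its successive sub-arcs cross the plaques of $\lambda$. Using the cross-ratio description of shears from Lemma \ref{lem:elementary case}, each sub-arc contributes a linear combination of shears $\sigma_\lambda^X(P,P')$ between consecutive plaques, with coefficients determined by the train-track weights $w_e$ of $\gamma$; grouping by branch, the length becomes a finite sum of products of shear weights and edge weights.

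The concluding step is to reorganize this sum so that it matches the combinatorial formula for $\omega_{\rm Th}(\sigma_\lambda^X,\gamma)$ as an antisymmetric pairing between the shear weights $\sigma_e$ and the curve weights $w_e$ at the switches of $\tau$. I expect the main obstacle to lie in the local bookkeeping around each switch: one has to account precisely for how the geometric length reorganizes around a switch as an antisymmetric combination of products of shears and weights on incoming and outgoing branches, and verify that this matches Thurston's combinatorial pairing on the nose (not merely up to sign or an overall constant). The closed-loop condition for $\gamma$, i.e. the switch relations satisfied by the weights $w_e$, should be exactly what makes the telescoping contributions cancel and what turns the remaining sum into an antisymmetric pairing of the required form; checking this cancellation term by term, and matching orientations of branches with the sign conventions of Bonahon's definition of $\omega_{\rm Th}$, is the technical heart of the argument.
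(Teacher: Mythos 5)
First, a point of reference: the paper does not prove this statement at all — it is imported verbatim as Theorem E of Bonahon's paper \cite{Bo96} — so the only meaningful comparison is with Bonahon's argument. Measured against that, your sketch has two genuine gaps. The first is in the reduction step. You propose to approximate $\mu\in\mc{ML}_\lambda$ by weighted simple closed multicurves, but the statement fixes the maximal lamination $\lambda$ and requires ${\rm support}(\mu)\subset\lambda$. For a typical maximal $\lambda$ (one all of whose minimal components are irrational) the cone $\mc{ML}_\lambda$ contains no simple closed curves whatsoever: weighted multicurves carried by $\tau$ are dense in $\mc{ML}$, not in $\mc{ML}_\lambda$, and an approximating multicurve is not an element of $\mc{ML}_\lambda$ to which the identity could be applied. (For the same reason, non-negative weight systems on $\tau$ parametrize laminations \emph{carried by} $\tau$, a strictly larger set than $\mc{ML}_\lambda$.) To salvage the reduction one must vary $\lambda$ along with $\mu$, completing approximating curves $\gamma_n$ to maximal laminations $\lambda_n\to\lambda$ and controlling the convergence $\sigma^X_{\lambda_n}\to\sigma^X_{\lambda}$ inside a common weight space $\mc{W}(\tau;\mb{R})$ — exactly the continuity machinery the paper recalls in the subsection on continuity of cocycles — and none of this appears in your outline.

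The second, more serious, gap is that your core computation is set up for the wrong geometric configuration. Since ${\rm support}(\mu)\subset\lambda$, the leaves of $\mu$ \emph{are} leaves of $\lambda$: they do not cross the plaques of $\lambda$, and a closed curve in $\mc{ML}_\lambda$ is realized as a closed leaf of $\lambda$ (with other leaves of $\lambda$ spiraling onto it), not as a geodesic cutting transversely through a chain of plaques. Consequently $L_X(\mu)$ is the integral of arclength along the leaves of $\lambda$ against the transverse measure $\mu$, and the proposed decomposition of a straightened representative into sub-arcs crossing plaques, each contributing a combination of shears $\sigma^X_\lambda(P,P')$, never gets off the ground. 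The actual content of Bonahon's theorem is precisely the nontrivial identity between this leafwise length integral and the pairing $\omega_{\rm Th}(\sigma^X_\lambda,\mu)$, interpreted homologically as an intersection number in the orientation cover of a train-track neighbourhood and established via the horocyclic foliation determined by $X$ and $\lambda$; relegating this to local bookkeeping around the switches conceals the entire difficulty. Even in the closed-leaf case the relevant computation is the expression of the translation length of the holonomy of $\gamma$ as a signed combination of the shears of the plaques spiraling onto $\gamma$, which is a different calculation from the one you describe.
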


\subsubsection{Continuity of cocycles}
In order to talk about continuity properties of cocycles we need to compare $\mc{H}(\lambda';\mb{R})$ with $\mc{H}(\lambda;\mb{R})$ for $\lambda'$ close to $\lambda$. This can be done using the {\em weights system} $\mc{W}(\tau;\mb{R})$ of a {\em train track} $\tau$ carrying $\lambda$. For us it is not important the definition of these objects, but rather the following facts (see the proof of Lemma 13 in Bonahon \cite{Bo98} or Proposition 5.10 and Corollary 5.11 in \cite{MV22a}):
\begin{itemize}
\item{$\tau$ determines an open set $U_\tau\subset\mc{GL}_m$ containing $\lambda$.}
\item{$\mc{W}(\tau;\mb{R})$ is a real vector space and there is a canonical linear isomorphism $\mc{H}(\lambda';\mb{R})\to\mc{W}(\tau;\mb{R})$ for every $\lambda'\in U_\tau$.}
\item{For every $\lambda_1,\lambda_2\in U_\tau$ the following diagram commutes
\[
\xymatrix{
\T\ar[d]\ar[r] &\mc{H}(\lambda_2;\mb{R})\ar[d]\\
\mc{H}(\lambda_1;\mb{R})\ar[r] &\mc{W}(\tau;\mb{R}).
}
\]
}
\item{For every $X\in\T$ the map $U_\tau \ni \lambda \mapsto \sigma^X_\lambda \in \mc{W}(\tau;\mb{R})$ is continuous.}
\end{itemize}

\subsection{Para-complex numbers}

In order to define the shear-bend cocycle of pleated surfaces it is convenient to exploit the natural para-complex cross-ratio on the boundary of $\mb{H}^{2,1}$ (see Section 2 of Danciger \cite{D14}).

\begin{dfn}[Para-complex Numbers]
The ring of para-complex numbers is $\mathbb{B}:=\mb{R}[\tau]/(\tau^2-1)$. Similarly to the case of complex numbers, every element $z=x+\tau y$ has: 
\begin{itemize}
\item{A conjugate ${\bar z}:=x-\tau y$.}
\item{A pseudo-norm $\abs{z}^2:=z\bar{z}=x^2-y^2\in\mb{R}$.}
\end{itemize}

However $\mb{B}$ has also non trivial zero-divisors: An element $z\in\mathbb{B}$ is invertible if and only if $\abs{z}^2\neq 0$, in which case $z^{-1}=\bar{z}/\abs{z}^2$. We denote by $\mb{B}^*$ the set of invertible elements of $\mathbb{B}$.
\end{dfn} 

It is convenient to decompose $\mb{B}$ as $\mb{R}\times\mb{R}$: Consider 
\[
e_l:=\frac{1+\tau}{2},e_r:=\frac{1-\tau}{2}.
\]
The elements $e_l,e_r$ are idempotent $e_j^2=e_j$, orthogonal $e_le_r = 0$, and conjugate ${\bar e_l}=e_r$. This implies that the map $(\lambda,\mu)\in\mb{R}\times\mb{R}\to\lambda e_l+\mu e_r\in\mathbb{B}$ is a ring isomorphism. In these coordinates, the conjugate of an element is $\overline{\lambda e_l+\mu e_r}=\mu e_l+\lambda e_r$ and its norm is $\abs{\lambda e_l+\mu e_r}=\lambda \mu$. 

\subsubsection{Exponential and logarithm}
The para-complex exponential function $\exp:\mb{B}\to\mb{B}$ is given by $\exp(z):=\sum_{k=0}^\infty\frac{z^k}{k!}$. In terms of the classical exponential we have $e^{x+\tau y}=e^x (\cosh(y)+\tau\sinh(y))$. The para-complex exponential map is injective, but not surjective. Its image coincides with
\[
\mathbb{B}^+ :=\{x+\tau y\in\mathbb{B}\mid\text{$x > 0$ and $\abs{x+\tau y}^2>0$}\}.
\]
The inverse of the exponential is the para-complex logarithm $\log:\mb{B}^+\to\mb{B}$. 

In coordinates $\mb{B}=\mb{R}\times\mb{R}$, we have: $\mb{B}^+=\{(\lambda,\mu)\in\mb{R}\times\mb{R}\left|\;\lambda,\mu>0\right.\}$. The exponential is $\exp(\lambda e_l+\mu e_r)=\exp(\lambda)e_l+\exp(\mu) e_r$. The logarithm is $\log\left(\lambda e_l+\mu e_r\right)=\log(\lambda)e_l+\log(\mu)e_r$.

\subsubsection{Projective para-complex line}
The boundary $\partial\mb{H}^{2,1}=\mb{RP}^1\times\mb{RP}^1$ can be identified with the para-complex projective line $\mb{BP}^1=(\mb{B}^2-\{0\})/\mb{B}^*$ via 
\[
([u],[v])\in\mb{RP}^1\times\mb{RP}^1\to\left[\frac{1+\tau}{2}u+\frac{1-\tau}{2}v\right]\in\mb{BP}^1
\]
and ${\rm PSL}_2(\mb{R})\times{\rm PSL}_2(\mb{R})$ can be thought of as the para-complex projective linear transformations ${\rm PSL}_2(\mb{B})={\rm SL}_2(\mb{B})/\mb{B}^*$ via the isomorphism 
\[
([A],[B])\in{\rm PSL}_2(\mb{R})\times{\rm PSL}_2(\mb{R})\to\left[\frac{1+\tau}{2}A+\frac{1-\tau}{2}B\right]\in{\rm PSL}_2(\mb{B}).
\]

The para-complex projective line $\mb{BP}^1$ is equipped with a natural {\em para-complex cross-ratio}:

\begin{dfn}[Cross Ratio]
The {\em para-complex cross-ratio} is defined by
\[
\beta^\mb{B}(z_1,z_2,z_3,z_4)=\frac{z_1-z_3}{z_1-z_4}\cdot\frac{z_2-z_4}{z_2-z_3}\in\mb{B} ,
\]
for any $4$-tuple $(z_1,z_2,z_3,z_4) \in \mathbb{B}^4$ such that $z_1 - z_4, z_2 - z_3 \in \mathbb{B}^*$.
\end{dfn}

The following is an elementary computation:

\begin{lem} 
\label{lem:para birapporto}
For every $a,b,c,d\in\mathbb{BP}^1=\mb{RP}^1\times\mb{RP}^1$ we have
\[
\beta^{\mb{B}}(a,b,c,d)=\frac{1+\tau}{2}\beta^{\mb{R}}(a_l,b_l,c_l,d_l)+\frac{1-\tau}{2}\beta^{\mb{R}}(a_r,b_r,c_r,d_r) ,
\]
where $x_l$ and $x_r$ denote the first and second components of $x \in \{a,b,c,d\} \subset \mb{RP}^1\times\mb{RP}^1$, respectively.
\end{lem}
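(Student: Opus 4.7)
The plan is to reduce the identity to a routine componentwise computation using the fact that the idempotent decomposition $z = z_l e_l + z_r e_r$ gives a \emph{ring} isomorphism $\mathbb{B} \cong \mathbb{R} \times \mathbb{R}$. Under this isomorphism every arithmetic operation appearing in the cross ratio splits componentwise: for $z = z_l e_l + z_r e_r$ and $w = w_l e_l + w_r e_r$, using $e_l^2 = e_l$, $e_r^2 = e_r$, $e_l e_r = 0$, one has
\[
z \pm w = (z_l \pm w_l) e_l + (z_r \pm w_r) e_r, \qquad zw = (z_l w_l) e_l + (z_r w_r) e_r,
\]
and, whenever $|z|^2 = z_l z_r \neq 0$, $z^{-1} = z_l^{-1} e_l + z_r^{-1} e_r$.

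Given this, the actual verification is a one-line calculation. Pick affine representatives of $a,b,c,d \in \mb{BP}^1$ in the chart $\mathbb{B} \hookrightarrow \mb{BP}^1$, so that under the identification $\mb{BP}^1 = \mb{RP}^1 \times \mb{RP}^1$ they correspond to points $(a_l, a_r), \dots, (d_l, d_r)$ in $\mathbb{R} \times \mathbb{R} \subset \mb{RP}^1 \times \mb{RP}^1$. The hypothesis $z_1 - z_4, z_2 - z_3 \in \mb{B}^*$ translates componentwise to $a_l \neq d_l$, $a_r \neq d_r$, $b_l \neq c_l$, $b_r \neq c_r$, which is exactly what is needed for the real cross ratios on each factor to be defined. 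Substituting into the formula for $\beta^{\mb{B}}$ and using the componentwise rules above,
\[
\beta^{\mb{B}}(a,b,c,d) = \left( \frac{a_l-c_l}{a_l-d_l} \cdot \frac{b_l-d_l}{b_l-c_l} \right) e_l + \left( \frac{a_r-c_r}{a_r-d_r} \cdot \frac{b_r-d_r}{b_r-c_r} \right) e_r,
\]
which is the claimed identity after recognizing the two factors as $\beta^{\mb{R}}(a_l,b_l,c_l,d_l)$ and $\beta^{\mb{R}}(a_r,b_r,c_r,d_r)$.

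The only genuine subtlety is the projective bookkeeping when one or more of the points $a,b,c,d$ is at infinity in one (or both) $\mb{RP}^1$ factors, since the affine formula above does not literally apply. I would handle this by first applying a para-complex Möbius transformation to move all points into the finite affine chart: under the isomorphism $\mathrm{PSL}_2(\mb{B}) = \mathrm{PSL}_2(\mb{R}) \times \mathrm{PSL}_2(\mb{R})$, such an element acts factorwise, so invariance of $\beta^{\mb{B}}$ under $\mathrm{PSL}_2(\mb{B})$ (which again follows from the componentwise arithmetic, applied now to matrix entries) matches the classical $\mathrm{PSL}_2(\mb{R})$-invariance of the two real cross ratios on the right hand side. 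Thus both sides transform the same way, reducing the general case to the finite affine case treated above. This is the least interesting part of the argument, but it is where one must be careful — everything else is formal.
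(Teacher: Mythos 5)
Your proof is correct and is exactly the computation the paper has in mind: the paper offers no argument beyond calling the lemma ``an elementary computation,'' and the intended route is precisely your observation that the idempotent splitting $\mathbb{B} = \mathbb{R}e_l \oplus \mathbb{R}e_r$ is a ring isomorphism onto $\mathbb{R}\times\mathbb{R}$, so the cross ratio computes componentwise. Your extra care with the points at infinity, handled via the factorwise $\mathrm{PSL}_2(\mathbb{B}) = \mathrm{PSL}_2(\mathbb{R})\times\mathrm{PSL}_2(\mathbb{R})$ action and invariance of both sides, is a legitimate (and slightly more complete) way to tie up the projective bookkeeping that the paper leaves implicit.
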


\subsection{Shear-bend cocycle}

We now recall the natural shear-bend cocycle and its geometric interpretation as given in Sections 4 and 5 of \cite{MV22a}.

Let $\rho_{X,Y}$ be a Mess representation with limit curve $\Lambda_{X,Y}$.

\subsubsection{Elementary shear}
Let us start with an elementary shear-bend. 

\begin{lem}
\label{lem:positivity}
Let $\Delta=(u,\ell^-,\ell^+),\Delta'=(u',\ell^+,\ell^-)\subset\mb{H}^{2,1}$ be ideal triangles sharing a common edge $\ell=[\ell^-,\ell^+]$ and with vertices on $\Lambda_{X,Y}$ ordered as $u<\ell^-<u'<\ell^+$. Then $- \beta^{\mb{B}}(\ell^+,\ell^-,u,u')\in\mb{B}^+$.
\end{lem}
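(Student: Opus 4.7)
The plan is to reduce the statement to two sign computations of classical real cross ratios on the two factors of $\partial\mb{H}^{2,1}=\mb{RP}^1\times\mb{RP}^1$, and then exploit the orientation-preserving graph structure of $\Lambda_{X,Y}$.

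First, I would apply Lemma \ref{lem:para birapporto} to decompose
\[
-\beta^{\mb{B}}(\ell^+,\ell^-,u,u')=-\beta^{\mb{R}}(\ell^+_l,\ell^-_l,u_l,u'_l)\,e_l-\beta^{\mb{R}}(\ell^+_r,\ell^-_r,u_r,u'_r)\,e_r.
\]
Recalling the idempotent-coordinate description $\mb{B}^+=\{\lambda e_l+\mu e_r \mid \lambda,\mu>0\}$, the statement reduces to showing that \emph{both} real cross ratios appearing on the right-hand side are strictly negative.

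Next, I would invoke the fact that $\Lambda_{X,Y}$ is the graph of the orientation-preserving homeomorphism $h_{X,Y}:\mb{RP}^1\to\mb{RP}^1$, so the cyclic order $u<\ell^-<u'<\ell^+$ along $\Lambda_{X,Y}$ projects to the same cyclic order $u_\bullet<\ell^-_\bullet<u'_\bullet<\ell^+_\bullet$ on each of the two boundary factors. Both sign computations thus collapse to the same elementary real computation: after normalizing by a Möbius transformation to a suitable affine chart, the formula
\[
\beta^{\mb{R}}(\ell^+_\bullet,\ell^-_\bullet,u_\bullet,u'_\bullet)=\frac{(\ell^+_\bullet-u_\bullet)(\ell^-_\bullet-u'_\bullet)}{(\ell^+_\bullet-u'_\bullet)(\ell^-_\bullet-u_\bullet)}
\]
has a positive denominator and a negative numerator (the factor $\ell^-_\bullet-u'_\bullet$ is the only negative one), hence is strictly negative on each factor. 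Combining the two factors gives $-\beta^{\mb{B}}\in\mb{B}^+$.

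The main obstacle is purely bookkeeping: one must be careful with the sign conventions used for both $\beta^{\mb{R}}$ and $\beta^{\mb{B}}$ and confirm that the given cyclic order on $\Lambda_{X,Y}$—which, a priori, lives on a circle embedded in $\partial\mb{H}^{2,1}$—really does descend to the same cyclic order on each $\mb{RP}^1$-factor (this is exactly what the orientation-preserving property of $h_{X,Y}$ provides, and what prevents the ordering hypothesis from being compatible with a signature-$(1,1)$ supporting plane, as already noted in the proof of Lemma \ref{lem:supporting plane}). Once these conventions are pinned down, the content of the lemma is a single sign check in each factor.
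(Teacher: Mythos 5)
Your proposal is correct and follows essentially the same route as the paper: decompose via Lemma \ref{lem:para birapporto} into the two real cross ratios, use the orientation-preserving graph property of $h_{X,Y}$ to transfer the interleaving order $u<\ell^-<u'<\ell^+$ to each $\mb{RP}^1$-factor, and conclude by the elementary sign check that each real cross ratio is negative. Your write-up actually spells out the sign computation that the paper leaves implicit (and the only thing that matters for it is that $\{u,u'\}$ separates $\{\ell^+,\ell^-\}$ on each factor, which is robust to the paper's slightly different ordering convention in its own proof).
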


\begin{proof}
Recall that $\Lambda_{X,Y}$ is the graph of the unique $(\rho_X-\rho_Y)$-equivariant homeomorphism $h_{X,Y}:\mb{RP}^1\to\mb{RP}^1$. For a point $p\in\mb{RP}^1\times\mb{RP}^1$ denote by $p_l,p_r$ the left and right components. Then we have $u_j<\ell^+_j<u'_j<\ell^-_j$ on $\mb{RP}^1$ for $j=l,r$. The conclusion follows from Lemma \ref{lem:para birapporto}.
\end{proof}

We define:
\[
\sigma^{\mb{B}}(\Delta,\Delta'):=\log(-\beta^{\mb{B}}(\ell^+,\ell^-,u,u'))\in\mb{B}.
\]

\subsubsection{Maximal laminations with countably many leaves}
We then consider the case of maximal laminations with countably many leaves.

These laminations always have the following structure: There is a canonical collection of simple sublaminations
\[
\lambda'=\lambda_1\sqcup\cdots\sqcup\lambda_n\subset\lambda
\]
where each $\lambda_j$ consists of the orbit of the axis of an element $\gamma_j\in\Gamma-\{1\}$ representing a simple closed curve. The complement $\lambda-\lambda'$ is made of isolated geodesics asymptotic to leaves of $\lambda'$.

Let $\lambda\subset\mb{H}^2$ be a maximal lamination with countably many leaves, and let $P,Q\subset\mb{H}^2-\lambda$ be a pair of distinct plaques. We denote by $\ell_1,\cdots,\ell_m$ the leaves of $\lambda'$ separating $P$ from $Q$, oriented so that $P$ and $Q$ lie on the left and on the right of each $\ell_j$. For any $j \in \{1, \dots, m\}$, select two plaques $R_j^P$ and $R_j^Q$ that lie on the left and on right of $\ell_j$, respectively, and have an ideal vertex equal to one of the endpoints of $\ell_j$. The elementary shear between them can be computed via the same formal expression from Lemma \ref{lem:elementary case}, by simply replacing the role of $\beta^\mathbb{R}$ with $\beta^\mathbb{B}$ (see also Lemma \ref{lem:para birapporto}). Note that between $R_{j-1}^Q$ and $R_j^P$ there are only finitely many consecutive adjacent plaques 
\[
R_{j-1}^Q=T_{j,0},\cdots,T_{j,k_j}=R_j^P.
\]
Define
\[
\sigma_\rho(P,Q):=\sum_{j=1}^m{\left(\sigma^{\mb{B}}(R_j^P,R_j^Q)+\sum_{i=0}^{k_j-1}\sigma^{\mb{B}}(T_{j,i},T_{i+1})\right)}.
\]

As observed in \cite{MV22a}*{Section 4.4}, a simple cross ratio identity shows that a different choice of plaques $R_j^P,R_j^Q$ asymptotic from the left and from the right to the leaves $\ell_j\in\lambda'$ separating $P$ from $Q$ gives the same value for $\sigma_\rho(P,Q)$. The fact that $\sigma_\rho(P,Q)$ is well-defined immediately implies that it is also satisfies the properties of a transverse cocycle. Therefore:

\begin{dfn}[Intrinsic Shear-Bend I]
Let $\rho_{X,Y}$ be a Mess representation. Let $\lambda$ be a maximal lamination with countably many leaves. The cocycle $\sigma_\rho(\bullet,\bullet)\in\mc{H}(\lambda;\mb{B})$ is the {\em intrinsic shear-bend cocycle} of the pleated set ${\hat S}_\lambda$.
\end{dfn} 

Furthermore we have:

\begin{pro}[Proposition 6.7 in \cite{MV22a}]
\label{pro:shear is shear}
Let $\rho_{X,Y}$ be a Mess representation. Let $\lambda$ be a maximal lamination with countably many leaves. Then $\Re \sigma_\rho = (\sigma_\rho+{\bar\sigma}_\rho)/2\in\mc{H}(\lambda;\mb{R})$ is the shear cocycle of the intrinsic hyperbolic structure $Z_\lambda\in\T$ of the pleated set ${\hat S}_\lambda$.
\end{pro}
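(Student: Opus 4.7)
The plan is to exploit Lemma \ref{lem:para birapporto}, which decomposes the para-complex cross ratio into a pair of real cross ratios, one on each factor of $\partial\mb{H}^{2,1}=\mb{RP}^1\times\mb{RP}^1$. Since $\Lambda_{X,Y}$ is the graph of $(h_X,h_Y)$, the two real factors are governed respectively by the hyperbolic structures $X$ and $Y$ via Lemma \ref{lem:elementary case}, so taking the para-complex real part will automatically produce the averaged shear cocycle $\tfrac{1}{2}(\sigma_\lambda^X+\sigma_\lambda^Y)$.

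First, I would work at the level of elementary shears. For adjacent plaques $\Delta,\Delta'$ sharing an edge $\ell=[\ell^-,\ell^+]$ with opposite ideal vertices $u,u'$, the definition gives $\sigma^{\mb{B}}(\Delta,\Delta')=\log(-\beta^{\mb{B}}(\ell^+,\ell^-,u,u'))$. Applying Lemma \ref{lem:para birapporto} componentwise and invoking Lemma \ref{lem:positivity} to ensure each real factor lies in $(0,\infty)$, the para-complex logarithm splits as
\[
\sigma^{\mb{B}}(\Delta,\Delta')=e_l\,\log\bigl(-\beta^{\mb{R}}(h_X(\ell^+),h_X(\ell^-),h_X(u),h_X(u'))\bigr)+e_r\,\log\bigl(-\beta^{\mb{R}}(h_Y(\ell^+),h_Y(\ell^-),h_Y(u),h_Y(u'))\bigr).
\]
By Lemma \ref{lem:elementary case}, the two real logarithms are precisely the elementary shears $\sigma_\lambda^X(\Delta,\Delta')$ and $\sigma_\lambda^Y(\Delta,\Delta')$ of $X$ and $Y$ along the corresponding plaques, realized in $\mb{H}^2$ via $h_X$ and $h_Y$. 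The second case in Lemma \ref{lem:elementary case} (plaques on opposite sides of an isolated leaf of $\lambda'$) decomposes identically, since the formula there is also a product of real cross ratios to which Lemma \ref{lem:para birapporto} applies factor by factor.

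Next, I would extend to an arbitrary pair of plaques $P,Q$ using the finite-sum definition of $\sigma_\rho(P,Q)$. Since $\sigma_\lambda^X$ and $\sigma_\lambda^Y$ are transverse cocycles by Theorem \ref{thm:thurston bonahon}, they satisfy the same sum formula, so the identity $\sigma_\rho=e_l\,\sigma_\lambda^X+e_r\,\sigma_\lambda^Y$ extends from elementary shears to every pair of plaques. Taking the para-complex real part and recalling $\Re(e_l\,a+e_r\,b)=(a+b)/2$, we obtain
\[
\Re\sigma_\rho=\tfrac{1}{2}\bigl(\sigma_\lambda^X+\sigma_\lambda^Y\bigr).
\]

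The main obstacle is the final identification $\tfrac{1}{2}(\sigma_\lambda^X+\sigma_\lambda^Y)=\sigma_\lambda^{Z_\lambda}$, since $Z_\lambda$ is a third, intrinsic hyperbolic structure on $\hat S_\lambda/\rho_{X,Y}(\Gamma)$, distinct from both $X$ and $Y$. I would tackle this via the $(\rho_{X,Y}-\rho_\lambda)$-equivariant 1-Lipschitz developing map $f:\hat S_\lambda\to\mb{H}^2$ provided by Proposition \ref{pro:hyperbolic structure}: because $f$ is an isometric totally geodesic map on each plaque, Lemma \ref{lem:elementary case} computes $\sigma_\lambda^{Z_\lambda}(\Delta,\Delta')$ as the real log cross ratio of the four ideal boundary points of $\Delta\cup\Delta'$ pushed forward by $f$. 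The required equality thus reduces to the geometric identity
\[
\bigl(-\beta^{\mb{R}}(f(\ell^+),f(\ell^-),f(u),f(u'))\bigr)^{2}=\bigl|{-}\beta^{\mb{B}}(\ell^+,\ell^-,u,u')\bigr|_{\mb{B}}^{2},
\]
which asserts that the intrinsic cross ratio realized by $f$ is the geometric mean of the two boundary cross ratios. This identity I would establish by normalizing the spacelike plane spanned by $\Delta$ to a standard totally geodesic copy of $\mb{H}^2\subset\mb{H}^{2,1}$, placing $[\ell^-,\ell^+]$ along a fixed spacelike geodesic, and computing both sides explicitly in the resulting coordinates, as carried out in Section 6 of \cite{MV22a}.
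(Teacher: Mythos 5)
Your proposal is correct and, at its core, runs along the same lines as the paper's proof: both arguments reduce to the identity expressing the square of the intrinsic hyperbolic cross ratio of the four ideal vertices as the para-complex pseudo-norm $|\beta^{\mb{B}}|^2_{\mb{B}}$, and both outsource the genuinely geometric content of that identity to Proposition 6.7 and Section 6 of \cite{MV22a}. The one structural difference is your routing: you first split $\sigma_\rho=e_l\,\sigma_\lambda^X+e_r\,\sigma_\lambda^Y$ via Lemmas \ref{lem:para birapporto} and \ref{lem:elementary case}, and then compare $\tfrac12(\sigma_\lambda^X+\sigma_\lambda^Y)$ with $\sigma_\lambda^{Z_\lambda}$; the paper never mentions $X$ and $Y$ in this proof, but instead compares $\Re\sigma^{\mb{B}}$ directly with the shear $\sigma_\xi$ built from the pseudo-Riemannian cross ratio $\beta_\xi$ of \cite{MV22a}, using $|\beta^{\mb{B}}|^2=\beta_\xi^2$ together with $\Re\log z=\tfrac12\log|z|^2$. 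Your detour is harmless --- and in fact anticipates part (1) of Theorem \ref{thm:pleated surfaces ads}, which the paper derives separately via the Gauss map --- but note that the step you call the ``main obstacle'' is indeed where all the content sits: normalizing the spacelike plane of $\Delta$ does not by itself locate $f(u')$, since $\Delta'$ leaves that plane and the position of $f(u')$ encodes precisely the intrinsic gluing of the two plaques along $\ell$ under the developing map of Proposition \ref{pro:hyperbolic structure}. That is exactly the computation carried out in Proposition 6.7 of \cite{MV22a}, so your citation is the right one, but it is carrying the full weight of the argument --- just as it does in the paper's own proof.
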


\begin{proof}
	Proposition 6.7 in \cite{MV22a} characterizes the intrinsic hyperbolic structure $Z_\lambda\in\T$ in terms of its shear coordinates. More precisely, one sees that the shear cocycle of $Z_\lambda$ can be reconstructed from a natural cross ratio $\beta_\xi$ associated to the $\rho_{X,Y}$-equivariant limit map $\xi : \partial \Gamma \to \partial \mathbb{H}^{2,1}$, and defined purely in terms of the preudo-Riemannian structure of $\mathbb{R}^{2,2}$. An elementary computation shows that the cross ratios $\beta_\xi$ and $\beta^{\mathbb{B}}$ are related by the identity
	\[
	| \beta^{\mathbb{B}}(\xi(x_1),\xi(x_2),\xi(x_3),\xi(x_4)) |^2 = \beta_\xi(x_1,x_2,x_3,x_4)^2 ,
	\]
	as we identify $\partial \mathbb{H}^{2,1}$ with $\mathbb{B}\mathrm{P}^1$. (Here $| \cdot |^2$ denotes the natural pseudo-norm on $\mathbb{B}$.) It then follows from the properties of the para-complex logarithm that the shear $\sigma_\xi$, defined through the cross ratio $\beta_\xi$, and the para-complex shear $\sigma^\mathbb{B}$ satisfy
	\[
	\Re(\sigma^{\mathbb{B}}(P,Q)) = \sigma_\xi(P,Q) ,
	\]
	for any pair of distinct plaques $P, Q$ of $\lambda$, from which we deduce the desired statement. 
\end{proof}

\subsubsection{General maximal laminations}
Lastly, we describe the natural finite approximation process that defines the shear-bend cocycle in general, extending the previous case: Let $\lambda\subset\mb{H}^2$ be an arbitrary maximal lamination. As before, let $P,Q\subset\mb{H}^2-\lambda$ be a pair of plaques and let $\mc{P}_{PQ}$ be the set of plaques separating $P$ from $Q$. Let 
\[
\mc{P}=\{P_1,\cdots,P_m\}\subset\mc{P}_{PQ}
\]
be a finite subset of plaques ordered from $P$ to $Q$. Any two consecutive $P_j,P_{j+1}$ cobound a (possibly empty) region $U_j$. We decompose its boundary as $\partial U_j=\ell_j\cup\ell_{j+1}$ with $\ell_j\subset\partial P_j$ and $\ell_{j+1}\subset\partial P_{j+1}$. We add to the finite collection $\mc{P}$ of plaques the triangles 
\[
\Delta(\ell_j^+,\ell_j^-,\ell_{j+1}^+),\Delta(\ell_j^-,\ell_{j+1}^-,\ell_{j+1}^+)
\]
obtaining a chain of triangles $P=T_1,T_2,\cdots,T_{3m-2},T_{3m-1}=Q$. 

We define
\[
\sigma_\rho(P,Q):=\sum_{j=1}^{3m-2}{\sigma^{\mb{B}}(T_j,T_{j+1})}.
\]

We then carefully choose an exhaustion $\{\mc{P}_n\}_{n\in\mb{N}}$ of $\mc{P}_{PQ}$ by an finite subsets and we set
\[
\sigma_\rho(P,Q):=\lim_{n\to\infty}{\sigma^{\mb{B}}_{\mc{P}_n}(P,Q)}.
\]

The existence of the limit as well as the independence of the choices made to define it and the fact that the limit object is a $\mb{B}$-transverse cocycle are proved in \cite{MV22a}:

\begin{thm}[Theorem B of \cite{MV22a}]
\label{thm:shear-bend cocycle}
Let $\rho_{X,Y}$ be a Mess representation. For every maximal geodesic lamination $\lambda\in\mc{GL}$, the finite approximation process converges and defines a $\mb{B}$-transverse cocycle $\sigma_\rho\in\mc{H}(\lambda;\mb{B})$.
\end{thm}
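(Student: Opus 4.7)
The plan is to reduce the statement to Bonahon's already-established real convergence result by exploiting the ring decomposition $\mb{B} = \mb{R} e_l \oplus \mb{R} e_r$ with $e_l = (1+\tau)/2$ and $e_r = (1-\tau)/2$. Via the identification $\partial\mb{H}^{2,1} = \mb{RP}^1 \times \mb{RP}^1$ and the splitting $\xi = (h_X, h_Y)$ of the equivariant limit map, the para-complex shear between ideal triangles with vertices on $\Lambda_{X,Y}$ should decompose componentwise into two real shears: one attached to the hyperbolic structure $X$ via $h_X$ and one attached to $Y$ via $h_Y$. The whole para-complex finite approximation process would then split as a pair of Bonahon real finite approximation processes, and convergence, independence of choices, and the transverse cocycle axioms would all follow componentwise.

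Concretely, for adjacent ideal triangles $\Delta, \Delta' \subset \hat{S}_\lambda$ sharing an edge $[\xi(a),\xi(b)]$ and with respective third vertices $\xi(c), \xi(d)$, Lemma \ref{lem:para birapporto} gives
\[
-\beta^{\mb{B}}(\xi(a),\xi(b),\xi(c),\xi(d)) = \bigl(-\beta^{\mb{R}}(h_X(a),h_X(b),h_X(c),h_X(d))\bigr) e_l + \bigl(-\beta^{\mb{R}}(h_Y(a),h_Y(b),h_Y(c),h_Y(d))\bigr) e_r.
\]
Lemma \ref{lem:positivity} places this element in $\mb{B}^+$, so both real cross-ratios on the right are positive. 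The identity $\log(\alpha e_l + \beta e_r) = \log(\alpha) e_l + \log(\beta) e_r$ for $\alpha,\beta>0$, combined with Lemma \ref{lem:elementary case}, then yields
\[
\sigma^{\mb{B}}(\Delta, \Delta') = \sigma_\lambda^X(P,P') \, e_l + \sigma_\lambda^Y(P,P') \, e_r,
\]
where $P, P' \subset \mb{H}^2$ are the abstract plaques of $\lambda$ underlying $\Delta, \Delta'$, and $\sigma_\lambda^X, \sigma_\lambda^Y$ are Bonahon's real elementary shears for $X, Y$ transported to the corresponding copies of $\mb{H}^2$ via $h_X, h_Y$. Since the finite approximation $\sigma^{\mb{B}}_{\mc{P}_n}(P,Q)$ is by definition an $\mb{R}$-linear combination of such elementary shears along a chain of interleaving fillers, the splitting propagates: one obtains
\[
\sigma^{\mb{B}}_{\mc{P}_n}(P,Q) = \sigma^{X}_{\mc{P}_n}(P,Q) \, e_l + \sigma^{Y}_{\mc{P}_n}(P,Q) \, e_r,
\]
where each real summand is precisely Bonahon's real finite approximation of the shear cocycle of $X$ (resp.\ $Y$) along $\lambda$.

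At this point I would invoke Theorem \ref{thm:thurston bonahon}, together with the finite-approximation convergence underlying it from \cite{Bo96}, twice: for any exhaustion $\{\mc{P}_n\}$ of the plaques separating $P$ from $Q$, both real sequences converge to $\sigma_\lambda^X(P,Q), \sigma_\lambda^Y(P,Q) \in \mc{H}(\lambda;\mb{R})$, independently of the chosen exhaustion. Hence $\sigma^{\mb{B}}_{\mc{P}_n}(P,Q) \to \sigma_\rho(P,Q) := \sigma_\lambda^X(P,Q) \, e_l + \sigma_\lambda^Y(P,Q) \, e_r$, and the three transverse cocycle axioms (invariance, symmetry, additivity) transfer componentwise from the real cocycles $\sigma_\lambda^X, \sigma_\lambda^Y$, using that $\rho_{X,Y}(\Gamma) = \rho_X(\Gamma) \times \rho_Y(\Gamma)$ acts componentwise on $\mb{RP}^1 \times \mb{RP}^1$. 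The main obstacle I anticipate is the careful bookkeeping to verify that the two canonical ``filler'' triangles inserted between consecutive plaques in the para-complex approximation project, under the two idempotent projections, to precisely the standard Bonahon fillers in the $X$- and $Y$-copies of $\mb{H}^2$ — that is, that the componentwise splitting is coherent with the entire finite approximation rule, not merely with the elementary shear formula.
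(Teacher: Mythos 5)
The paper does not actually prove this statement: it is imported wholesale as Theorem B of \cite{MV22a}, where it is established for general $\mathrm{SO}_0(2,n+1)$-maximal representations by direct convergence estimates on a cross ratio $\beta_\xi$ defined purely from the pseudo-Riemannian structure of $\mathbb{R}^{2,n+1}$ --- no product decomposition is available there. Your route is therefore genuinely different: you exploit the splitting $\mathbb{B}=\mathbb{R}e_l\oplus\mathbb{R}e_r$ and $\xi=(h_X,h_Y)$, which exists only in the anti de Sitter case $n=1$, to reduce the whole approximation process to two copies of the real one. The componentwise splitting of each elementary term is correct (Lemmas \ref{lem:para birapporto}, \ref{lem:positivity}, \ref{lem:elementary case} give exactly $\sigma^{\mathbb{B}}(\Delta,\Delta')=\sigma^X_\lambda(P,P')e_l+\sigma^Y_\lambda(P,P')e_r$, consistent with Theorem \ref{thm:pleated surfaces ads}), the finite sums split because each summand does, and the cocycle axioms are $\mathbb{R}$-linear so they transfer componentwise. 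What your approach buys is a more elementary, essentially self-contained argument that also yields Theorem \ref{thm:pleated surfaces ads}(1)--(2) as a byproduct, bypassing the Gauss-map-plus-continuity argument the paper uses for that identification; what it gives up is generality in $n$. One caveat beyond the bookkeeping you already flag: Bonahon's Theorem \ref{thm:thurston bonahon} defines $\sigma^X_\lambda$ via the horocyclic foliation and orthogonal projections, not via the elementary-shear finite approximation with the two filler triangles per gap, so invoking it ``twice'' is not quite enough --- you must also cite (or reprove) the real-case statement that this particular approximation scheme converges to Bonahon's cocycle, which is itself part of the content of \cite{Bo96} and of Section 4 of \cite{MV22a} rather than a formal consequence of Theorem \ref{thm:thurston bonahon} as stated here.
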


\begin{dfn}[Intrinsic Shear-Bend II]
Let $\rho_{X,Y}$ be a Mess representation. Let $\lambda$ be a maximal lamination. The cocycle $\sigma_\rho\in\mc{H}(\lambda;\mb{B})$ provided by Theorem \ref{thm:shear-bend cocycle} is the {\em intrinsic shear-bend cocycle} of the pleated set ${\hat S}_\lambda$.
\end{dfn} 

The following is a summary of results in Sections 4, 5 and 6 of \cite{MV22a}.

\begin{pro}
\label{thm:continuous extension}
We have the following properties:
\begin{enumerate}[(i)]
\item{If $\lambda$ has countably many leaves the definitions I and II coincide.}
\item{$(\sigma_\rho+{\bar\sigma}_\rho)/2$ is the shear cocycle of the intrinsic hyperbolic structure $Z_\lambda\in\T$.}
\item{The map $\lambda\in\mc{GL}_m\to\sigma_\rho\in\mc{W}(\tau;\mb{B})$ is continuous with respect to the Hausdorff topology on $\mc{GL}_m$. Here $\mc{W}(\tau;\mb{R})$ is the weight space of a train track $\tau$ carrying $\lambda$.}
\end{enumerate} 
\end{pro}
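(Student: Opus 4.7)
The plan is to establish the three assertions in the stated order, since (ii) for arbitrary $\lambda$ will follow from (ii) in the countable case combined with the continuity in (iii).

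For part (i), I would fix a pair of plaques $P, Q$ of $\lambda$ and exploit the additivity of the elementary shear $\sigma^{\mb{B}}$ under refinement: if one inserts a further plaque $T$ between two consecutive ones $T_j, T_{j+1}$ in the chain associated to a finite subset $\mc{P}_n \subset \mc{P}_{PQ}$, a direct cross-ratio identity forces
\[
\sigma^{\mb{B}}(T_j, T_{j+1}) = \sigma^{\mb{B}}(T_j, T) + \sigma^{\mb{B}}(T, T_{j+1}).
\]
Consequently the value $\sigma^{\mb{B}}_{\mc{P}_n}(P,Q)$ is stable under refinement. Since $\lambda$ has countably many leaves, I can pick an exhaustion that eventually contains every plaque of $\mc{P}_{PQ}$ adjacent to a leaf of the simple sublamination $\lambda' \subset \lambda$. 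Reorganizing the corresponding sum along the finitely many leaves $\ell_1, \dots, \ell_m \subset \lambda'$ separating $P$ from $Q$, and along the finite chains of adjacent plaques between $R_j^P$ and $R_{j+1}^Q$, produces exactly the expression appearing in Definition I.

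For part (ii), the countable-leaved case is Proposition \ref{pro:shear is shear}: the identity $|\beta^{\mb{B}}(\xi(x_1),\dots,\xi(x_4))|^2 = \beta_\xi(x_1,\dots,x_4)^2$ together with the properties of the para-complex logarithm gives $\Re(\sigma^{\mb{B}}) = \sigma_\xi$ on every elementary shear, and these assemble into the shear cocycle of $Z_\lambda$. For general $\lambda$, I would approximate by countable-leaved maximal laminations $\lambda_k \to \lambda$ (Hausdorff) inside a common train-track neighborhood $U_\tau$, use (iii) to transfer every $\sigma_\rho^{\lambda_k}$ and $\sigma_\rho^\lambda$ to $\mc{W}(\tau;\mb{B})$, and pass to the limit. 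Continuity of $Z \mapsto \sigma_{\lambda'}^Z$ (Theorem \ref{thm:thurston bonahon}) combined with continuity of $\lambda' \mapsto Z_{\lambda'}$, which follows from Lemma \ref{lem:pleated cpt} and the subconvergence of normalized developing maps on compacta, then forces the identification $\Re(\sigma_\rho) = \sigma_\lambda^{Z_\lambda}$ in the limit.

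The main obstacle is part (iii), namely uniform control of the finite approximation process over $U_\tau$. For fixed plaques $P, Q$, each elementary shear $\sigma^{\mb{B}}(T_j, T_{j+1})$ depends continuously on $\lambda' \in U_\tau$ through the continuity of the boundary map $\xi$ and of the para-complex cross-ratio. The real work is to upgrade this pointwise continuity to \emph{uniform} convergence of the partial sums $\sigma^{\mb{B}}_{\mc{P}_n}(P,Q)$ as $n \to \infty$. I would follow the strategy of \cite{MV22a}: control the tail contribution of plaques in $\mc{P}_{PQ} \setminus \mc{P}_n$ by para-complex logarithms of cross-ratios supported on "small" boundary arcs, and observe that these tail estimates are uniform in $\lambda' \in U_\tau$ because the limit curve $\Lambda_{X,Y}$ and the carrying train track $\tau$ are fixed in the neighborhood. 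Once uniform convergence is established, continuity of $\lambda' \mapsto \sigma_\rho \in \mc{W}(\tau;\mb{B})$ follows from continuity of each finite approximation together with the canonical identifications $\mc{H}(\lambda';\mb{B}) \to \mc{W}(\tau;\mb{B})$ valid throughout $U_\tau$.
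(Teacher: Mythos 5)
First, a caveat on the comparison: the paper gives no proof of this proposition at all --- it is stated as ``a summary of results in Sections 4, 5 and 6 of \cite{MV22a}'' --- so your attempt can only be measured against the strategy of that reference. Your outlines of (iii) and of the reduction of (ii) to the countable case are broadly consistent with that strategy, but part (i) rests on a claim that is false. The finite approximations $\sigma^{\mb{B}}_{\mc{P}_n}(P,Q)$ are \emph{not} stable under refinement. The elementary shear $\sigma^{\mb{B}}(T_j,T_{j+1})$ is only defined for \emph{adjacent} triangles, and one cannot insert a plaque between adjacent ones; what actually happens under refinement is that the auxiliary triangles $\Delta(\ell_j^+,\ell_j^-,\ell_{j+1}^+)$, $\Delta(\ell_j^-,\ell_{j+1}^-,\ell_{j+1}^+)$ spanning a gap are replaced by new ones adapted to the two smaller gaps, and the resulting sum genuinely changes. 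If stability under refinement held, the very first finite approximation would already equal the limit, the convergence statement of Theorem \ref{thm:shear-bend cocycle} would be vacuous, and $\sigma_\rho(P,Q)$ would be computable by a single short chain for arbitrary (e.g.\ minimal) laminations, which is not the case. Even for countably-leaved $\lambda$ the approximations change at every stage: the plaques of $\mc{P}_{PQ}$ accumulate onto the closed leaves $\ell_j$ of $\lambda'$, and each refinement moves the auxiliary triangles of the gap containing $\ell_j$ deeper into the spiral. The actual content of (i) is precisely the computation of the limit of these spiraling contributions and its identification --- via a telescoping cross-ratio identity exploiting the common ideal vertex at an endpoint of $\ell_j$ --- with the closed-form term $\sigma^{\mb{B}}(R_j^P,R_j^Q)$ of Definition I (the para-complex analogue of the second formula of Lemma \ref{lem:elementary case}). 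That computation is the heart of the matter and is missing from your sketch.

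Two smaller points. In (ii) for general $\lambda$ you need not only (iii) but also the convergence $Z_{\lambda_k}\to Z_\lambda$ of the intrinsic hyperbolic structures and the \emph{joint} continuity of $(\lambda',Z)\mapsto\sigma^Z_{\lambda'}\in\mc{W}(\tau;\mb{R})$ (the paper only records continuity in $\lambda'$ for fixed $Z$). Lemma \ref{lem:pleated cpt} gives precompactness of $\{Z_{\lambda_k}\}$, but you must still identify the sublimit of the normalized developing maps as a developing map for the limit pleated set --- a real step, not a remark. Your sketch of (iii), uniform tail estimates over the train-track neighborhood $U_\tau$ controlled by the widths of the gaps, is the right idea and matches the approach of \cite{MV22a}, though all of the work is hidden in the word ``uniform''.
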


\subsection{Gauss map}
In order to prove Theorem \ref{thm:pleated surfaces ads} we study the {\em Gauss map} of the pleated set ${\hat S}_\lambda$ which we now describe. To this purpose let us begin with some general observations.

The group ${\rm PSL}_2(\mb{R})\times{\rm PSL}_2(\mb{R})$ acts transitively on oriented timelike geodesics. The stabilizer of $\gamma(t)=\cos(t)I+\sin(t)J\in{\rm PSO}(2)$ where $J={\tiny \left(\begin{array}{c c} 0 &-1\\ 1 &0\\ \end{array}\right)}$ is ${\rm PSO}(2)\times{\rm PSO}(2)$. 

Therefore, the space of oriented timelike geodesics is naturally ${\rm PSL}_2(\mb{R})\times{\rm PSL}_2(\mb{R})$-equivariantly identified with
\[
{\rm PSL}_2(\mb{R})/{\rm PSO}(2)\times{\rm PSL}_2(\mb{R})/{\rm PSO}(2) \cong \mathbb{H}^2 \times \mathbb{H}^2 ,
\] 
that is, the symmetric space of ${\rm PSL}_2(\mb{R})\times{\rm PSL}_2(\mb{R})$. We identify $\mb{RP}^1$ with $\mb{P}\{A\in M_2(\mb{R})\left|\;{\rm rk}(A)=1\right.\}/{\rm PSO}(2)$ and $\mb{H}^2$ with ${\rm PSL}_2(\mb{R})/{\rm PSO}(2)$.

\begin{lem}
\label{lem:gauss map}
Let $H\subset\mb{H}^{2,1}$ be a (oriented) spacelike plane. Consider the map $g=(g_l,g_r):H\to\mb{H}^2\times\mb{H}^2$ where $g(x)$ is the future pointing timelike geodesic orthogonal to $H$ at $x$. Then $g_j$ is isometric and extends continuously to the map $g_j:\partial H\subset\mb{RP}^1\times\mb{RP}^1\to\mb{RP}^1$ sending $g_j(a_l,a_r)=a_j$ for $j=l,r$.
\end{lem}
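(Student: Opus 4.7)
The plan is to reduce to a single model spacelike plane by equivariance and then make the Gauss map explicit there. Since ${\rm PSL}_2(\mb{R})\times{\rm PSL}_2(\mb{R})$ acts transitively on oriented spacelike planes of $\mb{H}^{2,1}$ (equivalently, on oriented timelike geodesics orthogonal to them, as already observed in the paragraph preceding the lemma), and since $g$ is manifestly equivariant for this action and the natural action on $\mb{H}^2\times\mb{H}^2$, it suffices to verify the two assertions for one choice of oriented plane $H_0$.

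For the model I would take $H_0\subset{\rm SL}_2(\mb{R})$ to be the set of symmetric positive definite matrices of determinant one. It contains $I$, its tangent space at $I$ is the space of traceless symmetric matrices (which one checks has signature $(2,0)$ with respect to $\langle\bullet,\bullet\rangle$), and the unit timelike normal at $I$ can be taken to be $J={\tiny\left(\begin{smallmatrix}0 &-1\\1 &0\end{smallmatrix}\right)}$. The orthogonal timelike geodesic at $I$ is then $\gamma(t)=\cos(t)I+\sin(t)J$, which represents the basepoint $(o,o)\in\mb{H}^2\times\mb{H}^2$ with $o=[{\rm SO}(2)]$. A direct computation shows that the stabilizer of the oriented plane $H_0$ in ${\rm PSL}_2(\mb{R})\times{\rm PSL}_2(\mb{R})$ is the diagonal-like subgroup $\{(A,A^{-T}):A\in{\rm PSL}_2(\mb{R})\}$, and it acts on $H_0$ by $(A,A^{-T})\cdot I=AA^T$. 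Equivariance of $g$ then yields the formula
\[
g(AA^T)=(A\cdot o,\, A^{-T}\cdot o),
\]
from which $g_l$ is visibly the canonical identification $H_0\cong{\rm SL}_2(\mb{R})/{\rm SO}(2)=\mb{H}^2$ coming from the polar decomposition, and $g_r$ is the same map composed with the Cartan involution $A\mapsto A^{-T}$, which descends to an isometry of $\mb{H}^2$. Both components are thus isometries.

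For the boundary extension I would use the singular value decomposition $A=UDV^T$, with $U,V\in{\rm SO}(2)$ and $D=\mathrm{diag}(\lambda,\lambda^{-1})$, and let $\lambda\to\infty$. Then $AA^T=UD^2U^T$ converges projectively to the rank one symmetric matrix $(Ue_1)(Ue_1)^T$, which via the identification $\partial\mb{H}^{2,1}=\mb{RP}^1\times\mb{RP}^1$ by $[X]\mapsto([{\rm Im}(X)],[{\rm Ker}(X)])$ is the boundary point $(a_l,a_r)=([Ue_1],[Ue_2])$. Simultaneously $A\cdot o$ converges to the attracting direction of $A$ on $\partial\mb{H}^2=\mb{RP}^1$, namely $[Ue_1]=a_l$; the SVD $A^{-T}=UD^{-1}V^T$ yields analogously $A^{-T}\cdot o\to[Ue_2]=a_r$. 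This gives the claimed continuous extension $g_j(a_l,a_r)=a_j$. The only mildly delicate point is bookkeeping the two identifications of $\mb{RP}^1$ involved at the boundary (image-kernel on the anti de Sitter side, attracting fixed point on the hyperbolic side); apart from this, the argument requires essentially no pseudo-Riemannian computation beyond checking that $H_0$ is spacelike with normal $J$.
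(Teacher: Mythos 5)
Your argument is correct and follows the same overall strategy as the paper's proof: reduce by equivariance to a single model spacelike plane and compute $g$ explicitly there. The difference lies in the choice of model and the linear algebra used at the boundary. The paper takes $H$ to be the dual plane of the identity, $H=\mb{P}\{M\in{\rm SL}_2(\mb{R})\mid{\rm tr}(M)=0\}$, whose stabilizer is the diagonal $\{(A,A)\}$ acting by conjugation; there $g_j$ is identified with the restriction to $H$ of the coset projection ${\rm PSL}_2(\mb{R})\to{\rm PSL}_2(\mb{R})/{\rm PSO}(2)$, isometry is deduced from the orthogonality of the fiber ${\rm PSO}(2)$ to $H$ at $J$, and the boundary extension is read off from Cayley--Hamilton: every $M\in\partial H$ satisfies $M^2=0$, hence ${\rm Im}(M)={\rm Ker}(M)$, so $a_l=a_r$ along $\partial H$ and the formula $g_j(a_l,a_r)=a_j$ degenerates to a tautology there. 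You instead take the plane of positive definite symmetric matrices, with stabilizer $\{(A,A^{-T})\}$, identify $g_l$ with the polar-decomposition map and $g_r$ with its composition with the geodesic symmetry $S\mapsto S^{-1}$ at $I$, and use the singular value decomposition at infinity, where now $a_r=a_l^{\perp}$ and the two components are visibly distinct already on the model. Both computations are equally elementary and both rest on the same (standard, and in both write-ups slightly glossed) normalization check that the identification of the model plane with ${\rm PSL}_2(\mb{R})/{\rm PSO}(2)$ is isometric for the induced metric; your version has the merit of making the separation of the left and right factors concrete, while the paper's choice makes the boundary statement essentially automatic.
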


\begin{proof}
By equivariance it is enough to check the claim for a specific hyperplane $H\subset\mb{H}^{2,1}={\rm PSL}_2(\mb{R})$. We choose $H$ to be the dual plane of $I$, that is $H=\mb{P}\{M\in{\rm SL}_2(\mb{R})\left|\;{\rm tr}(M)=0\right.\}$. As above, let $\gamma={\rm PSO}(2)$.

Notice that $J=H\cap \gamma$ and, hence, $g(J)=\gamma=([I],[I])$. As the diagonal group of ${\rm PSL}_2(\mb{R})\times{\rm PSL}_2(\mb{R})$ preserves $H$ and acts transitively on it, by equivariance we have $g_j(AJ)=[A]$, that is, the components $g_j$ are the restrictions of the standard projection $\pi:{\rm PSL}_2(\mathbb{R})\to{\rm PSL}_2(\mathbb{R})/{\rm PSO}(2)$ to $H$. Also observe that, as $\gamma$ is orthogonal to $H$ at $J$, the differential ${\rm d}\pi_J$ is isometric. Thus, by equivariance, ${\rm d}\pi$ is isometric everywhere. 

The boundary of $H$ is $\partial H=\mb{P}\{M\in M_2(\mb{R})\left|\;{\rm tr}(M)=0,{\rm rk}(M)=1\right.\}$. Notice that, by Hamilton-Cayley, every $M\in M_2(\mb{R})$ satisfies $M^2-{\rm tr}(M)M+{\rm det}(M)=0$. Therefore, if $M\in\partial H$, then $M^2=0\Longleftrightarrow{\rm Im}(M)={\rm Ker}(M)$. The map $g_j(AJ)=[A]$ extends continuously to a map $\partial H\to\mb{RP}^1$ sending $g_j({\rm Im}(M),{\rm Ker}(M))=[{\rm Im}(M)]=[{\rm Ker}(M)]$.
\end{proof}

Let $\rho_{X,Y}$ be a Mess representation with limit curve $\Lambda_{X,Y}\subset\mb{RP}^1\times\mb{RP}^1$.

\begin{lem}
\label{lem:gauss shear}
Consider two ideal spacelike adjacent triangles $\Delta=\Delta(a,b,c)$ and $\Delta'=(c,b,a')$ sharing a common edge $[b,c]$ and with vertices ordered as $a<b<a'<c$ along $\Lambda_{X,Y}$. Let $g=(g_l,g_r):{\rm int}(\Delta)\cup{\rm int}(\Delta')\to\mb{H}^2\times\mb{H}^2$ be the map sending $x$ to the future pointing timelike normal $g(x)\in\mb{H}^2\times\mb{H}^2$. Then
\begin{align*}
\sigma(\Delta,\Delta')=\frac{\sigma_{\mb{H}^2}(g_l(\Delta),g_l(\Delta'))+\sigma_{\mb{H}^2}(g_r(\Delta),g_r(\Delta'))}{2}\\
\beta(\Delta,\Delta')=\frac{\sigma_{\mb{H}^2}(g_l(\Delta),g_l(\Delta'))-\sigma_{\mb{H}^2}(g_r(\Delta),g_r(\Delta'))}{2}
\end{align*}
where $\sigma_{\mb{H}^2}(\Delta_1,\Delta_2)$ denotes the hyperbolic elementary shear of the adjacent ideal triangles $\Delta_1,\Delta_2\subset\mb{H}^2$.
\end{lem}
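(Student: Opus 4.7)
The plan is to deduce both identities from a single para-complex equality,
\[
\sigma^{\mathbb{B}}(\Delta,\Delta')=e_l\,\sigma_{\mathbb{H}^2}\bigl(g_l(\Delta),g_l(\Delta')\bigr)+e_r\,\sigma_{\mathbb{H}^2}\bigl(g_r(\Delta),g_r(\Delta')\bigr),
\]
since writing $e_l=(1+\tau)/2$, $e_r=(1-\tau)/2$ and separating real and para-imaginary parts produces exactly the two claimed formulas (with $\sigma(\Delta,\Delta')$ and $\beta(\Delta,\Delta')$ interpreted as the real and $\tau$-coefficients of the para-complex shear-bend).

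First I would unwind the definition of $\sigma^{\mathbb{B}}(\Delta,\Delta')$. Matching the ordering $a<b<a'<c$ to the conventions of Lemma~\ref{lem:positivity} (with $u=a$, $\ell^-=b$, $u'=a'$, $\ell^+=c$), this is $\log(-\beta^{\mathbb{B}}(c,b,a,a'))$. I would then apply Lemma~\ref{lem:para birapporto} to split the para-complex cross ratio as
\[
-\beta^{\mathbb{B}}(c,b,a,a')=e_l\bigl(-\beta^{\mathbb{R}}(c_l,b_l,a_l,a'_l)\bigr)+e_r\bigl(-\beta^{\mathbb{R}}(c_r,b_r,a_r,a'_r)\bigr),
\]
which lies in $\mathbb{B}^+$ by Lemma~\ref{lem:positivity}. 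Since the para-complex logarithm acts componentwise on the splitting $\mathbb{B}=e_l\mathbb{R}\oplus e_r\mathbb{R}$, this yields an immediate $e_l,e_r$ decomposition of $\sigma^{\mathbb{B}}(\Delta,\Delta')$ into two ordinary logarithms of real cross ratios.

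Second, I would identify each of those real pieces with the hyperbolic shear on the right-hand side. By Lemma~\ref{lem:gauss map}, the restriction of $g_j$ to the spacelike plane containing $\Delta$ (respectively $\Delta'$) is an isometry onto a totally geodesic subset of $\mathbb{H}^2$, and its continuous boundary extension sends $(x_l,x_r)\in\partial\mathbb{H}^{2,1}$ to $x_j\in\mathbb{RP}^1$. Hence $g_j(\Delta)$ and $g_j(\Delta')$ are adjacent ideal triangles in $\mathbb{H}^2$ with ideal vertices $(a_j,b_j,c_j)$ and $(c_j,b_j,a'_j)$, sharing the edge from $b_j$ to $c_j$. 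Lemma~\ref{lem:elementary case}, applied in $\mathbb{H}^2$ with that common edge oriented so that $g_j(\Delta)$ lies on the left, then gives
\[
\sigma_{\mathbb{H}^2}\bigl(g_j(\Delta),g_j(\Delta')\bigr)=\log\bigl(-\beta^{\mathbb{R}}(c_j,b_j,a_j,a'_j)\bigr),
\]
which is precisely the $j$-component of the split from the first step; substituting back proves the claimed para-complex identity.

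The only non-formal point is to verify that the left/right convention on the common edge is preserved by each $g_j$. This reduces to the fact that $h_{X,Y}:\mathbb{RP}^1\to\mathbb{RP}^1$ is an orientation-preserving homeomorphism, so the cyclic order $a<b<a'<c$ on $\Lambda_{X,Y}$ descends to the same cyclic order $a_j<b_j<a'_j<c_j$ on $\mathbb{RP}^1$ after either projection; in particular $g_j(\Delta)$ and $g_j(\Delta')$ are genuinely adjacent (rather than overlapping), and the convention that $g_j(\Delta)$ lies on the left of $[b_j,c_j]$ is automatic. I expect this orientation check to be the only subtle step, while everything else is a bookkeeping exercise combining Lemmas~\ref{lem:positivity},~\ref{lem:para birapporto},~\ref{lem:gauss map}, and~\ref{lem:elementary case}.
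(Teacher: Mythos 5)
Your proposal is correct and follows essentially the same route as the paper's own proof: identify the projected triangles via Lemma~\ref{lem:gauss map}, compute each component shear as $\log(-\beta^{\mathbb{R}})$ via Lemma~\ref{lem:elementary case}, and split the para-complex cross ratio with Lemma~\ref{lem:para birapporto} (the paper justifies the cyclic ordering $a_j<b_j<a'_j<c_j$ by acausality of $\Delta\cup\Delta'$ rather than by orientation-preservation of $h_{X,Y}$, but these amount to the same thing here). No gaps.
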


\begin{proof}
Identify $\mb{BP}^1$ with $\mb{RP}^1\times\mb{RP}^1$. By Lemma \ref{lem:gauss map}, their left and right projections of $\Delta,\Delta'$ are the ideal triangles $g_j(\Delta)=\Delta(a_j,b_j,c_j),g_j(\Delta')=\Delta(c_j,b_j,a'_j)$ where $j=l,r$ respectively. Notice that we have $a_j<b_j<a_j'<c_j$ on $\mb{RP}^1$ because the set $\Delta\cup\Delta'$ is acausal. In particular, $$\sigma_{\mb{H}^2}(g_j(\Delta),g_j(\Delta')) = \log(-\beta^\mb{R}(b_j,c_j,a_j,a_j'))$$
by Lemma \ref{lem:elementary case}. Recall that $\sigma(\Delta,\Delta'),\beta(\Delta,\Delta')$ are the real and para-complex parts of $\sigma^\mb{B}(\Delta,\Delta')=\sigma^\mb{B}(a,b,c,d)$ and that, by definition, $\sigma^\mb{B}(b,c,a,a')=\log(-\beta^\mb{B}(b,c,a,a'))$. The conclusion follows from Lemma \ref{lem:para birapporto}.
\end{proof}

We are ready to prove Theorem \ref{thm:pleated surfaces ads}.

\subsection{The proof of Theorem \ref{thm:pleated surfaces ads}}

Let $\rho_{X,Y}$ be a Mess representation. 

Consider the pleated set ${\hat S}_\lambda$ associated with the maximal lamination $\lambda$. Every point $x\in{\hat S}_\lambda-{\hat \lambda}$ lies in a plaque and, therefore, has a well-defined future pointing timelike unit normal direction $g(x)$. The map $g=(g_l,g_r):{\hat S}_\lambda-{\hat \lambda}\to\mb{H}^2\times\mb{H}^2$ is the {\em Gauss map} of the pleated set ${\hat S}_\lambda$. By Lemma \ref{lem:gauss map}, it is $\rho_{X,Y}$-equivariant and totally geodesic on each plaque. 

\begin{proof}[Proof of Theorem \ref{thm:pleated surfaces ads}]
We split the proof into two cases.

{\em Maximal laminations with countably many leaves}. Let $P,Q$ be distinct plaques. By definition and by Lemma \ref{lem:elementary case}, it is enough to consider the case where $P,Q$ are either adjacent or asymptotic to the same leaf. The claim then follows from the computations of Lemmas \ref{lem:gauss map} and \ref{lem:gauss shear}.

{\em General maximal laminations}. The general case follows density of finite leaved maximal laminations in $\mc{GL}_m$ and continuity properties of the cocycles as given in Theorem \ref{thm:continuous extension}.
\end{proof} 

\subsection{Shear-bend parametrization}
The proof of Theorem \ref{thm:shear-bend ads} is a combination of Theorem \ref{thm:pleated surfaces ads} and some properties of the classical shear coordinates $\Phi:\T\to\mc{H}(\lambda;\mb{R})$.

\begin{proof}[Proof of Theorem \ref{thm:shear-bend ads}]
We have 
\[
\mc{H}(\lambda;\mb{B})=\frac{1+\tau}{2}\mc{H}(\lambda;\mb{R})\oplus\frac{1-\tau}{2}\mc{H}(\lambda;\mb{R})
\]
as $\mb{B}$-modules.

{\em Part (1)}. Recall that $\sigma^\mb{B}_\lambda=\sigma+\tau\beta$ and that, by Theorem \ref{thm:pleated surfaces ads}, we have $\sigma=(\sigma_\lambda^X+\sigma_\lambda^Y)/2$ and $\beta=(\sigma_\lambda^X-\sigma_\lambda^Y)/2$. Therefore, in terms of the above splitting, the shear-bend map decomposes as
\[
\Psi:\rho_{X,Y}\to\sigma^\mb{B}_\lambda=\frac{1+\tau}{2}\sigma_\lambda^X\oplus\frac{1-\tau}{2}\sigma_\lambda^Y.
\]
The single components $\Phi(X),\Phi(Y)=\sigma_\lambda^X,\sigma_\lambda^Y$ are analytic by Theorem \ref{thm:thurston bonahon}. Injectivity also follows from the injectivity in the same theorem since: 
\begin{align*}
\sigma_\rho^\mb{B}=\sigma_{\rho'}^\mb{B}\Longleftrightarrow\sigma_\lambda^X=\sigma_\lambda^{X'}\text{ and }\sigma_\lambda^Y=\sigma_\lambda^{Y'}.
\end{align*}

It remains to be checked that the map respects the para-complex structures of $\T\times\T$ and $\mc{H}(\lambda;\mb{B})$. The para-complex structure $\mb{J}$ acts on $T_X\T\oplus T_Y\T$ simply as $\mb{J}(u,v)=(u,-v)$ and acts on $\mc{H}(\lambda;\mb{B})$ as the multiplication by $\tau$. Denote by $\Phi:\T\to\mc{H}(\lambda;\mb{R})$ the classical shear coordinates, we have:
\begin{align*}
{\rm d}\Psi\mb{J}(u,v) &={\rm d}\Psi(u,-v)=\frac{1+\tau}{2}{\rm d}\Phi(u)\oplus\frac{1-\tau}{2}(-{\rm d}\Phi(v))\\
&=\tau\left(\frac{1+\tau}{2}{\rm d}\Phi(u)\oplus\frac{1-\tau}{2}{\rm d}\Phi(v)\right)=\tau {\rm d}\Psi(u,v).
\end{align*}

{\em Part (2)}. The Thurston's symplectic form on $\mc{H}(\lambda;\mb{B})$ splits as 
\[
\omega_{{\rm Th}}^\mb{B}=\frac{1+\tau}{2}\omega_{{\rm Th}}^\mb{R}\oplus\frac{1-\tau}{2}\omega_{{\rm Th}}^\mb{R},
\]
with respect to the above decomposition. Thus, by Theorem \ref{thm:symplectic length}, we have
\begin{align*}
\omega_{{\rm Th}}^\mb{B}(\sigma_\rho^\mb{B},\mu) &=\frac{1+\tau}{2}\omega_{{\rm Th}}^\mb{R}(\sigma_\lambda^X,\mu)+\frac{1-\tau}{2}\omega_{{\rm Th}}^\mb{R}(\sigma_\lambda^Y,\mu)\\
&=\frac{1+\tau}{2}L_X(\mu)+\frac{1-\tau}{2}L_Y(\mu)\\
&=\frac{L_X(\mu)+L_Y(\mu)}{2}+\tau\frac{L_X(\mu)-L_Y(\mu)}{2}\\
\end{align*}
for every $\mu\in\mc{ML}_\lambda$. We will see in the next section that $L_\rho=(L_X+L_Y)/2$ and $\theta_\rho=(L_X-L_Y)/2$.

{\em Part (3)}. By part (1), the image of $\Psi$ is 
\[
\left\{\sigma+\tau\beta\in\mc{H}(\lambda;\mb{B})\left|\;\sigma+\beta,\sigma-\beta\in\T\subset\mc{H}(\lambda;\mb{R})\right.\right\}.
\]
By Theorem \ref{thm:thurston bonahon}, we have
\[
\T=\{\sigma\in\mc{H}(\lambda,\mb{R})\left|\;\omega_{{\rm Th}}(\sigma,\bullet)>0\text{ \rm on }\mc{ML}_\lambda\right.\},
\]
Thus 
\begin{align*}
&\sigma+\tau\beta\in\Psi(\T\times\T)\Leftrightarrow\omega_{{\rm Th}}(\sigma\pm\beta,\mu)>0\\
&\Leftrightarrow\omega_{{\rm Th}}(\sigma,\mu)^2-\omega_{{\rm Th}}(\beta,\mu)^2=\left|\omega_{{\rm Th}}^\mb{B}(\sigma+\tau\beta,\mu)\right|^2_{\mb{B}}>0
\end{align*}
for every $\mu\in\mc{ML}_\lambda$.

{\em Part (4)}. By work of Bonahon and Sözen \cite{BS01}, we have that $\Phi^*\omega_{{\rm Th}}=c\cdot\omega_{{\rm WP}}$. The conclusion comes from the fact that $\Psi$ splits as $\frac{1+\tau}{2}\Phi\oplus\frac{1-\tau}{2}\Phi$ and $\omega_{{\rm Th}}^\mb{B}$ splits as $\frac{1+\tau}{2}\omega_{{\rm Th}}^\mb{R}\oplus\frac{1-\tau}{2}\omega_{{\rm Th}}^\mb{R}$.
\end{proof}

\section{Length functions in anti de Sitter 3-manifolds}
\label{sec:sec5}

In this section we study the anti de Sitter length functions associated with Mess representations and prove Theorem \ref{thm:length}.

\subsection{Moving endpoints orthogonally}
Let us start with some estimates in $\mb{H}^{2,1}$ on how the length of a spacelike segment changes if we move its endpoints orthogonally in timelike directions. The following is an elementary computation:

\begin{lem}
\label{lem:move orthogonally}
Let $[x,y]$ be a spacelike segment. Let $v\in T_x\mb{H}^{2,1},w\in T_y\mb{H}^{2,1}$ be unit timelike vectors orthogonal to $[x,y]$. Consider $p=\cos(t)x+\sin(t)v$ and $q=\cos(t)y+\sin(t)w$. Then
\begin{enumerate}
\item{$[v,w]$ lies on the dual geodesic of $[x,y]$. Hence, it is spacelike.}
\item{We have 
\[
-\langle p,q\rangle=\cos(t)^2\cosh(d_{\mb{H}^{2,1}}(x,y))+\sin(t)^2\cosh(d_{\mb{H}^{2,1}}(v,w)).
\]
As $-\langle p,q\rangle>1$, $[p,q]$ is spacelike and $\cosh(d_{\mb{H}^{2,1}}(p,q))=-\langle p,q\rangle$.}
\end{enumerate}
\end{lem}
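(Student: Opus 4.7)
The plan is to work in the linear model of $\mb{H}^{2,1}$ sitting inside $M_2(\mb{R})$ and expand $\langle p,q\rangle$ bilinearly. The whole statement will reduce to a short computation once I identify in which $2$-plane $v$ and $w$ live.

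First I would identify the dual geodesic of $[x,y]$. Let $V := \mathrm{span}(x,y) \subset M_2(\mb{R})$: since $[x,y]$ is spacelike, $V$ is a $2$-plane of signature $(1,1)$, and because the ambient form has signature $(2,2)$, the orthogonal complement $V^\perp$ is also a $2$-plane of signature $(1,1)$, with $V^\perp \cap \mb{H}^{2,1}$ equal to the dual geodesic of $[x,y]$. To prove part (1) it is enough to show $v, w \in V^\perp$. For $v$: by definition $v \in x^\perp$, and orthogonality of $v$ to the segment $[x,y]$ at $x$ means $v$ is orthogonal to the unit tangent $u$ at $x$ to the geodesic $[x,y]$; but $u \in V \cap x^\perp$, so $V = \mathrm{span}(x,u)$ and hence $v \perp V$, i.e., $v \in V^\perp$. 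The same argument at $y$ gives $w \in V^\perp$. Therefore $[v,w] \subset V^\perp \cap \mb{H}^{2,1}$ is a subsegment of the (spacelike) dual geodesic.

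For part (2), I would expand
\[
\langle p, q\rangle = \cos^2(t)\langle x,y\rangle + \cos(t)\sin(t)\bigl(\langle x,w\rangle + \langle v,y\rangle\bigr) + \sin^2(t)\langle v,w\rangle ,
\]
and note that the two cross terms vanish because $v,w \in V^\perp$ and $x,y \in V$. Next I would invoke the explicit parametrization of a spacelike geodesic recalled in the preliminaries: if $y$ is at spacelike distance $d$ from $x$ then $y = \cosh(d) x + \sinh(d) u$ for some unit $u \perp x$, whence $\langle x,y\rangle = -\cosh(d_{\mb{H}^{2,1}}(x,y))$. Applying the same formula along the dual geodesic (which is likewise spacelike and passes through the unit vectors $v,w$) yields $\langle v,w\rangle = -\cosh(d_{\mb{H}^{2,1}}(v,w))$. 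Substitution gives the identity
\[
-\langle p,q\rangle = \cos^2(t)\cosh(d_{\mb{H}^{2,1}}(x,y)) + \sin^2(t)\cosh(d_{\mb{H}^{2,1}}(v,w)) .
\]
Since $\cosh \geq 1$, the right-hand side is bounded below by $\cos^2(t) + \sin^2(t) = 1$, with strict inequality whenever $p \neq q$ (either $\cos(t) \neq 0$ combined with $x \neq y$, or $\sin(t) \neq 0$ combined with $v \neq w$). The paper's characterization of spacelike pairs then gives that $[p,q]$ is spacelike with $\cosh(d_{\mb{H}^{2,1}}(p,q)) = |\langle p,q\rangle| = -\langle p,q\rangle$.

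The only step with genuine content is the identification $v, w \in V^\perp$; everything else is a one-line bilinear expansion together with the standard formula for the inner product along a spacelike geodesic. I do not expect any serious obstacle.
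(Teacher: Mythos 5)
Your argument is correct and is precisely the ``elementary computation'' that the paper alludes to without writing out: the paper gives no proof of this lemma, and the decomposition $M_2(\mb{R})=V\oplus V^{\perp}$ with $V=\mathrm{span}(x,y)$, the observation that $v,w\in V^{\perp}$, and the resulting vanishing of the cross terms in the bilinear expansion is exactly the intended route. The one point worth making explicit --- a convention already implicit in the lemma itself, since the claimed identity cannot hold for both $w$ and $-w$ while $d_{\mb{H}^{2,1}}(v,w)$ is unchanged --- is that the step $\langle v,w\rangle=-\cosh(d_{\mb{H}^{2,1}}(v,w))$ requires $v$ and $w$ to lie on the same branch of the hyperbola of unit timelike vectors in $V^{\perp}$ (i.e.\ to be consistently, say future-, oriented), which is the case in the paper's applications where $w=\rho(\gamma)v$.
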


In order to manipulate better some inequalities, later on we will use several times the following estimates on hyperbolic trigonometric functions: 

\begin{lem}
\label{lem:analysis}
We have:
\begin{enumerate}
\item{For every $a_0 > 0$ and $b_0 \in (0,\frac{\pi}{2})$, there exists $\kappa>0$ such that
\[
\cos(b)^2\cosh(a)+\sin(b)^2\cosh(a-a_0)\le\cosh(a-\kappa)
\]
for every $a \in [a_0,+\infty)$ and $b \in [b_0,\pi/2]$.}
\item{For every $a_0 > 0$ there exists $c_0 \in (0,1)$ such that
\[
c \, \cosh(a)\ge\cosh(a-\eta(c))
\]
for all $a \ge a_0$ and $c\in[c_0,1]$, where $\eta(c) = \mathrm{arccosh}(1/c)$.}
\end{enumerate} 
\end{lem}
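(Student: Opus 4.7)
The plan is to prove the two parts separately by direct trigonometric computation, in each case reducing to a clean equivalent inequality and then choosing the constants ($\kappa$ and $c_0$ respectively) in closed form. No step presents a significant obstacle; the only care required is to identify where the inequality is tight and to pick the constants accordingly.

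For part (1), I would first absorb the $a$-dependence of the mixed-cosh term by studying the ratio $\cosh(a-a_0)/\cosh(a)$ on $[a_0,\infty)$. A direct differentiation shows that this ratio has derivative $-\sinh(a_0)/\cosh^2(a)<0$, so it is monotone decreasing, hence bounded above by its value $1/\cosh(a_0)$ at $a=a_0$. Combined with $\sin^2(b)\ge\sin^2(b_0)$ on $[b_0,\pi/2]$, this gives
\[
\cos^2(b)\cosh(a)+\sin^2(b)\cosh(a-a_0)\le \Bigl(1-\sin^2(b_0)\bigl(1-\cosh(a_0)^{-1}\bigr)\Bigr)\cosh(a) =: C\cosh(a),
\]
with $C\in(0,1)$ depending only on $a_0,b_0$. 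To convert this upper bound into the desired form, I would use the elementary fact that $\cosh(a-\kappa)\ge e^{-\kappa}\cosh(a)$ for all $a,\kappa\ge0$, which follows from the addition formula and $\sinh(a)\le\cosh(a)$. Choosing $\kappa:=\log(1/C)>0$ then yields $\cosh(a-\kappa)\ge C\cosh(a)\ge\mathrm{LHS}$, concluding the proof of (1).

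For part (2), I would apply the addition formula to $\cosh(a-\eta(c))=\cosh(a)\cosh(\eta(c))-\sinh(a)\sinh(\eta(c))$ and substitute $\cosh(\eta(c))=1/c$ together with $\sinh(\eta(c))=\sqrt{1-c^2}/c$. After multiplying through by $c$ and rearranging, the inequality $c\cosh(a)\ge\cosh(a-\eta(c))$ becomes equivalent to
\[
\tanh(a)\ge\sqrt{1-c^2}.
\]
Since $\tanh$ is increasing in $a$ and $\sqrt{1-c^2}$ is decreasing in $c$, the tightest case is $a=a_0$, $c=c_0$. Choosing $c_0:=\mathrm{sech}(a_0)=1/\cosh(a_0)\in(0,1)$ (or any value in $[\mathrm{sech}(a_0),1)$) gives $\sqrt{1-c_0^2}=\tanh(a_0)$, and the required inequality follows on the whole domain.

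The hardest aspect of the write-up is purely bookkeeping: in part (1) one must be careful that the bound $\cosh(a-\kappa)\ge e^{-\kappa}\cosh(a)$ is uniform (not a limiting statement as $a\to\infty$), and in part (2) one must recognize that the equivalence with $\tanh(a)\ge\sqrt{1-c^2}$ is exact, not just asymptotic. Both are quick checks via the addition formulas.
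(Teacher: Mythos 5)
Your proof is correct and follows essentially the same route as the paper's: part (1) bounds the left side by $C\cosh(a)$ with $C=\cos^2(b_0)+\sin^2(b_0)/\cosh(a_0)<1$ using monotonicity of $\cosh(a-a_0)/\cosh(a)$ and then compares with $e^{-\kappa}\cosh(a)\le\cosh(a-\kappa)$, exactly as in the paper; part (2) is the same addition-formula computation (the paper adds the nonnegative term $\sinh(\delta)\sinh(a-\delta)$, you rearrange to the equivalent $\tanh(a)\ge\sqrt{1-c^2}$), and both yield the same constant $c_0=1/\cosh(a_0)$.
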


\begin{proof} 
A straightforward computation shows that, for every $u>0$, the function $x \mapsto \cosh(x-u)/\cosh(x)$ is strictly decreasing over $\mathbb{R}$. In particular for every $x \geq x_0 > 0$ we have
\[
e^{- u} < \frac{\cosh(x - u)}{\cosh(x)} \leq \frac{\cosh(x_0 - u)}{\cosh(x_0)} .
\]

\vspace{1em}

\noindent {\em Inequality (1)}.  We first rewrite the desired statement as
\[
\cos(b)^2+\sin(b)^2\frac{\cosh(a-a_0)}{\cosh(a)}\le\frac{\cosh(a-\kappa)}{\cosh(a)}.
\]

As $x \mapsto \cosh(x-a_0)/\cosh(x)$ is decreasing and $b \geq b_0 \in (0,\frac{\pi}{2}]$, we have
\begin{align*}
	\cos(b)^2+\sin(b)^2\frac{\cosh(a-a_0)}{\cosh(a)} & \le \cos(b)^2+\sin(b)^2\frac{1}{\cosh(a_0)} \\
	& = 1 - \sin(b)^2 \left(1-\frac{1}{\cosh(a_0)}\right) \\
	& \leq 1 - \sin(b_0)^2 \left(1-\frac{1}{\cosh(a_0)}\right) \\
	& = \cos(b_0)^2+\sin(b_0)^2\frac{1}{\cosh(a_0)} <1.
\end{align*}
Since $\cosh(a - \kappa)/\cosh(a) > e^{-\kappa}$, it is enough to choose $\kappa>0$ so that $\cos(b_0)^2+\sin(b_0)^2/\cosh(a_0)<e^{-\kappa}$.
\vspace{1em}

\noindent {\em Inequality (2).}  We write $c = 1/\cosh(\delta)$ for some $\delta \geq 0$. For every $\delta \in [0,a_0]$ and for every $a \geq a_0$ we have
\begin{align*}
	\frac{\cosh(a - \delta)}{c} & = \cosh(\delta) \cosh(a - \delta) \\
	& \leq \cosh(\delta) \cosh(a - \delta) + \sinh(\delta) \sinh(a - \delta) \\
	& = \cosh(a) .
\end{align*}
Hence the assertion follows if we set $c_0 : = 1/\cosh(a_0)$.
\end{proof}

\subsection{Length and pleated surfaces}
We now introduce loxodromic transformations of $\mb{H}^{2,1}$ and the length functions associated to Mess representations.

\begin{dfn}[Loxodromic]
An isometry $\gamma=(A,B)\in{\rm PSL}_2(\mb{R})\times{\rm PSL}_2(\mb{R})$ is {\em loxodromic} if $A,B$ are both loxodromic transformations of ${\rm PSL}_2(\mb{R})$. A loxodromic transformation $\gamma$ preserves two disjoint (dual) lines 
\[
\ell=[(x_A^+,x_B^+),(x_A^-,x_B^-)],\ell^*=[(x_A^+,x_B^-),(x_A^-,x_B^+)]\subset\mb{H}^{2,1},
\]
where $x_A^\pm,x_B^\pm$ are the attracting and repelling fixed points of $A,B$ on $\mb{RP}^1$, and acts on them by translations by 
\[
L(\gamma)=\frac{L(A)+L(B)}{2}\;\text{ \rm and }\;\theta(\gamma)=\frac{\left|L(A)-L(B)\right|}{2}
\]
respectively where $L(A),L(B)$ are the translation lengths of $A,B$. The quantities $L(\gamma)$ and $\theta(\gamma)$ are the {\em translation length} and {\em torsion} of $\gamma$. 
\end{dfn}

Notice that if $\rho_{X,Y}$ is a Mess representation, then for every $\gamma\in\Gamma-\{1\}$ the transformation $\rho_{X,Y}(\gamma)=(\rho_X(\gamma),\rho_Y(\gamma))$ is loxodromic because $\rho_X,\rho_Y$ are holonomies of hyperbolic structures. Furthermore, as $\Lambda_{X,Y}\subset\mb{RP}^1\times\mb{RP}^1$ is the graph of the unique $(\rho_X-\rho_Y)$-equivariant homeomorphism $h_{X,Y}:\mb{RP}^1\times\mb{RP}^1$, we see that the axis $\ell_\gamma$ of $\rho_{X,Y}(\gamma)$, having the endpoints on $\Lambda_{X,Y}$, is contained in $\mc{CH}_{X,Y}$.

We are now ready to prove the first part of Theorem \ref{thm:length}.

\begin{pro}
\label{pro:lengthA}
Let $\rho_{X,Y}$ a Mess representation. Let $\gamma\in\Gamma-\{1\}$ be a non-trivial element, denote by $\ell\subset\mc{CH}_{X,Y}$ the axis of $\rho_{X,Y}(\gamma)$. Let $\lambda\subset\Sigma$ be a maximal lamination, let $Z_\lambda\in\T$ be the intrinsic hyperbolic structure on ${\hat S}_\lambda/\rho_{X,Y}(\Gamma)$ where ${\hat S}_\lambda\subset\mc{CH}_{X,Y}$ is the pleated set associated with $\lambda$. Let $\delta$ be the maximal timelike distance of $\ell$ from ${\hat S}_\lambda$. Then:
\[
\cosh(L_Z(\gamma))\le\cos(\delta)^2\cosh(L_\rho(\gamma))+\sin(\delta)^2\cosh(\theta_\rho(\gamma)).
\]
\end{pro}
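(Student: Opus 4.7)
The intuition, spelled out in the introduction, is that the pleated surface sits at timelike distance at most $\delta$ from the axis $\ell$, and that pushing a spacelike segment orthogonally along timelike directions interpolates between the axis length $L_\rho(\gamma)$ (at $\delta = 0$) and the dual axis length $\theta_\rho(\gamma)$ (at $\delta = \pi/2$) in the precise way dictated by Lemma \ref{lem:move orthogonally}. My strategy is to realize this interpolation geometrically by projecting a well-chosen point of $\ell$ orthogonally onto $\hat{S}_\lambda$, then transport the resulting inequality to $\mathbb{H}^2$ via the 1-Lipschitz developing map of Proposition \ref{pro:hyperbolic structure}.

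\textbf{Step 1: Orthogonal projection from $\ell$ to $\hat{S}_\lambda$.}  Since $\hat{S}_\lambda \subset \Omega_{X,Y}$ is an acausal Lipschitz disk with boundary $\Lambda_{X,Y}$ (Proposition \ref{pro:existence pleated sets}), every inextensible timelike geodesic in $\Omega_{X,Y}$ meets $\hat{S}_\lambda$ in at most one point, and meets it in exactly one once we choose an appropriate time-orientation.  For each $x \in \ell$ I let $p(x) \in \hat{S}_\lambda$ be the intersection point of the (future or past) orthogonal timelike geodesic through $x$ with $\hat{S}_\lambda$, and set $t(x) := d_{\mathbb{H}^{2,1}}^{\mathrm{time}}(x, p(x))$; by $\rho_{X,Y}(\Gamma)$-equivariance, $p(\gamma x) = \rho_{X,Y}(\gamma) p(x)$ and $t(\gamma x) = t(x)$, so $t$ descends to a continuous function on the compact quotient $\ell / \langle \rho_{X,Y}(\gamma)\rangle$ and attains its maximum $\delta$ at some $x_0 \in \ell$.

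\textbf{Step 2: Apply Lemma \ref{lem:move orthogonally}.}  Set $y_0 := \rho_{X,Y}(\gamma) x_0$, $p := p(x_0)$, $q := \rho_{X,Y}(\gamma) p = p(y_0)$, and let $v \in T_{x_0}\mathbb{H}^{2,1}$, $w \in T_{y_0}\mathbb{H}^{2,1}$ be the unit timelike orthogonal vectors pointing toward $\hat{S}_\lambda$.  Then $p = \cos(\delta) x_0 + \sin(\delta) v$ and $q = \cos(\delta) y_0 + \sin(\delta) w$. Since $x_0, y_0 \in \ell$ and $\rho_{X,Y}(\gamma)$ translates $\ell$ by $L_\rho(\gamma)$, we have $d_{\mathbb{H}^{2,1}}(x_0,y_0) = L_\rho(\gamma)$. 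By Lemma \ref{lem:move orthogonally}(1), $[v,w]$ lies on the dual geodesic $\ell^*$, and by equivariance $w = \rho_{X,Y}(\gamma) v$; since $\rho_{X,Y}(\gamma)$ translates $\ell^*$ by $\theta_\rho(\gamma)$, this gives $d_{\mathbb{H}^{2,1}}(v,w) = \theta_\rho(\gamma)$.  Lemma \ref{lem:move orthogonally}(2) then yields
\[
\cosh\bigl(d_{\mathbb{H}^{2,1}}(p,q)\bigr) = \cos(\delta)^2 \cosh(L_\rho(\gamma)) + \sin(\delta)^2 \cosh(\theta_\rho(\gamma)).
\]

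\textbf{Step 3: Transfer to $Z_\lambda$ via the developing map.}  Since $p, q \in \hat{S}_\lambda$, the 1-Lipschitz developing map $\hat{f} : \hat{S}_\lambda \to \mathbb{H}^2$ from Proposition \ref{pro:hyperbolic structure} gives $d_{\mathbb{H}^2}(\hat{f}(p), \hat{f}(q)) \le d_{\mathbb{H}^{2,1}}(p,q)$.  Equivariance forces $\hat{f}(q) = \rho_\lambda(\gamma) \hat{f}(p)$, so $L_{Z_\lambda}(\gamma) \le d_{\mathbb{H}^2}(\hat{f}(p), \rho_\lambda(\gamma) \hat{f}(p)) \le d_{\mathbb{H}^{2,1}}(p,q)$, and applying $\cosh$ concludes the proof.

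\textbf{Main obstacle.}  The technical core is Step 1: rigorously setting up the orthogonal timelike projection from $\ell$ to $\hat{S}_\lambda$, in particular verifying that the timelike geodesics orthogonal to $\ell$ do hit $\hat{S}_\lambda$ (which relies on the Cauchy-like behavior of acausal Lipschitz disks with boundary $\Lambda_{X,Y}$) and that the projection is continuous and equivariant so that the maximum $\delta$ is attained on the compact quotient circle.  Once this is in place, the remainder is a direct combination of Lemma \ref{lem:move orthogonally} with the 1-Lipschitz property of the developing map.
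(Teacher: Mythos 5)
Your Steps 2 and 3 are exactly the paper's argument: apply Lemma \ref{lem:move orthogonally} to the spacelike segment $[x_0,\rho_{X,Y}(\gamma)x_0]\subset\ell$ with the orthogonal unit timelike vectors $v,\rho_{X,Y}(\gamma)v$, identify $d_{\mb{H}^{2,1}}(x_0,\rho_{X,Y}(\gamma)x_0)=L_\rho(\gamma)$ and $d_{\mb{H}^{2,1}}(v,\rho_{X,Y}(\gamma)v)=\theta_\rho(\gamma)$, then push through the $1$-Lipschitz developing map of Proposition \ref{pro:hyperbolic structure} and use that $L_{Z_\lambda}(\gamma)$ is the minimal displacement. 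So the route is the same as the paper's.

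The one place where your write-up would not survive scrutiny is Step 1, and it is also where you deviate from the paper. The map $x\mapsto p(x)$ is not well defined: in $T_x\mb{H}^{2,1}$ the orthogonal complement of the tangent direction to $\ell$ is a $2$-plane of signature $(1,1)$, whose unit timelike vectors form a hyperbola, so there is a whole one-parameter family of timelike geodesics orthogonal to $\ell$ at $x$ (not a single ``future or past'' one), and several of them may meet $\hat{S}_\lambda$. Consequently your function $t$ is ambiguous, and even after fixing a choice you would still owe an argument that $\max_x t(x)$ equals the $\delta$ of the statement, i.e.\ the supremum of the timelike distance over \emph{all} pairs in $\ell\times\hat{S}_\lambda$; since the right-hand side of the inequality is decreasing in $\delta$ (as $L_\rho(\gamma)>\theta_\rho(\gamma)$), proving the bound with a smaller value of $\delta$ would give a strictly weaker statement. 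The paper avoids all of this: by cocompactness of the $\langle\rho_{X,Y}(\gamma)\rangle$-action one takes a pair $(x,y)\in\ell\times\hat{S}_\lambda$ realizing the maximum $\delta$ directly, and first-order criticality of the distance as $x$ varies along $\ell$ forces $[x,y]$ to be orthogonal to $\ell$ at $x$ — which is all that Lemma \ref{lem:move orthogonally} requires. Replacing your Step 1 by this observation makes the proof complete and identical to the paper's.
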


\begin{proof}
Let $x\in\ell,y\in{\hat S}_\lambda$ be points that realize the maximal timelike distance $\delta$. Notice that the timelike segment $[x,y]$ is orthogonal to $\ell$ at $x$. Denote by $v\in T_x\mb{H}^{2,1}$ the unit timelike vector tangent to $[x,y]$. We can write $y=\cos(\delta)x+\sin(\delta)v$. 

We now apply Lemma \ref{lem:move orthogonally} to the spacelike segment $[x,\rho_{X,Y}(\gamma)x]\subset\ell$ and the timelike unit tangent vectors $v,\rho_{X,Y}(\gamma)v$. We have:
\begin{align*}
&\cosh\left(d_{\mb{H}^{2,1}}(y,\rho_{X,Y}(\gamma)y)\right)\\
&=\cos(\delta)^2\cosh\left(d_{\mb{H}^{2,1}}(x,\rho_{X,Y}(\gamma)x)\right)+\sin(\delta)^2\cosh\left(d_{\mb{H}^{2,1}}(v,\rho_{X,Y}(\gamma)v)\right).
\end{align*}

Notice that $d_{\mb{H}^{2,1}}(x,\rho_{X,Y}(\gamma)x)=L_\rho(\gamma)$ and $d_{\mb{H}^{2,1}}(v,\rho_{X,Y}(\gamma)v)=\theta_\rho(\gamma)$.

The conclusion then follows from Proposition \ref{pro:hyperbolic structure} which says that the intrinsic hyperbolic distance between $y,\rho_{X,Y}(\gamma)y$ on ${\hat S}_\lambda$ is smaller than $d_{\mb{H}^{2,1}}(y,\rho_{X,Y}(\gamma)y)$ and the fact that $L_Z(\gamma)$ coincides with the minimal displacement of $\rho_{X,Y}(\gamma)$ with respect to the hyperbolic metric on ${\hat S}_\lambda$.
\end{proof}

\subsection{Intersection and pleated surfaces}
We then prove the second part of Theorem \ref{thm:length}.

\begin{pro}
\label{pro:lengthB}
Let $\rho_{X,Y}$ be a Mess representation. Let $\gamma\in\Gamma-\{1\}$ be a non-trivial element, denote by $\ell\subset\mc{CH}_{X,Y}$ the axis of $\rho_{X,Y}(\gamma)$. Let $\delta^\pm$ be the maximal timelike distance of $\ell$ from $\lambda^\pm$. Then:
\[
\cosh(i(\lambda^\pm,\gamma))\le\sin(\delta^\pm)^2\cosh(L_\rho(\gamma))+\cos(\delta^\pm)^2\cosh(\theta_\rho(\gamma)).
\]

\end{pro}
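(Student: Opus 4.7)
The plan is to mirror the proof of Proposition \ref{pro:lengthA}, replacing the pleated surface $\hat{S}_\lambda$ with the initial/terminal singularity $\mc{S}^{\pm}$, which is the natural dual object associated with $\partial^{\pm}\mc{CH}_{X,Y}$. The numerical observation motivating this substitution is that Lemma \ref{lem:move orthogonally} applied at parameter $t = \pi/2 - \delta^{\pm}$ (rather than at $t = \delta$ as in Proposition \ref{pro:lengthA}) already produces exactly the right-hand side of the desired inequality. I describe the $\delta^+$ case; the $\delta^-$ case is analogous.

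First I would set up the dual picture. Let $x \in \ell$ and $y \in \lambda^+$ realize the maximum timelike distance $\delta := \delta^+$, and let $v \in T_x \mb{H}^{2,1}$ be the unit timelike vector along $[x,y]$, so $y = \cos(\delta) x + \sin(\delta) v$. Maximality forces $[x,y]$ to be orthogonal to $\ell$ at $x$, and it forces the tangent vector $u_y := -\sin(\delta)x + \cos(\delta)v$ to the geodesic at $y$ to be orthogonal to the leaf of $\lambda^+$ through $y$; this in turn identifies the 2-plane through $y$ with normal $u_y$ as a (spacelike) supporting plane $H$ of $\partial^+\mc{CH}_{X,Y}$. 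By Lemma \ref{lem:supporting plane}, its dual point $w^* = u_y$ (viewed as a point of $\mb{H}^{2,1}$) lies in $\mc{S}^+$.

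Next I would apply Lemma \ref{lem:move orthogonally} to the spacelike segment $[x, \rho_{X,Y}(\gamma)x] \subset \ell$ with unit timelike orthogonal vectors $-v$ at $x$ and $-\rho_{X,Y}(\gamma)v$ at $\rho_{X,Y}(\gamma)x$, at parameter $t = \pi/2 - \delta$. Since $\rho_{X,Y}(\gamma)$ is an isometry preserving $\ell$ (hence $\ell^*$), these normals are valid input for the lemma and $[-v, -\rho_{X,Y}(\gamma)v]$ is a spacelike segment along the dual axis $\ell^*$. The point produced by the lemma is $\sin(\delta)x - \cos(\delta)v = -w^*$, which equals $w^*$ in projective $\mb{H}^{2,1}$, and the formula of the lemma yields
\[
\cosh d_{\mb{H}^{2,1}}(w^*, \rho_{X,Y}(\gamma)w^*) = \sin^2(\delta) \cosh L_\rho(\gamma) + \cos^2(\delta) \cosh \theta_\rho(\gamma),
\]
which is exactly the right-hand side of the inequality to prove.

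To conclude I would invoke Proposition \ref{pro:singularities}: since $w^* \in \mc{S}^+$,
\[
i(\gamma, \lambda^+) = \min_{w \in \mc{S}^+} d_{\mc{S}^+}(w, \rho_{X,Y}(\gamma)w) \le d_{\mc{S}^+}(w^*, \rho_{X,Y}(\gamma)w^*),
\]
so it remains to show $d_{\mc{S}^+}(w^*, \rho_{X,Y}(\gamma)w^*) \le d_{\mb{H}^{2,1}}(w^*, \rho_{X,Y}(\gamma)w^*)$. This comparison between the intrinsic tree metric and the extrinsic Lorentzian spacelike distance is the main technical obstacle. I would handle it by constructing an explicit weakly regular path in $\mc{S}^+$ from $w^*$ to $\rho_{X,Y}(\gamma)w^*$---the natural candidate being the unique arc in the $\mb{R}$-tree $\mc{S}^+$---and bounding its length via Definition \ref{def:length space} and Lemma \ref{lem:path length}. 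The geometric idea is that this tree arc is closely linked to the translation axis of $\rho_{X,Y}(\gamma)$ in $\mc{S}^+$, which in turn is controlled by the dual axis $\ell^*$, allowing a direct length comparison with the Lorentzian distance between the endpoints.
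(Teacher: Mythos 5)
Your architecture coincides with the paper's: you identify the dual point $w^*=v^\pm$ of the supporting plane at the foot of the maximal timelike segment as a point of $\mc{S}^\pm$, compute $\cosh d_{\mb{H}^{2,1}}(w^*,\rho_{X,Y}(\gamma)w^*)=\sin^2(\delta^\pm)\cosh L_\rho(\gamma)+\cos^2(\delta^\pm)\cosh\theta_\rho(\gamma)$ via Lemma \ref{lem:move orthogonally} applied at parameter $\pi/2-\delta^\pm$, and invoke Proposition \ref{pro:singularities} to bound $i(\gamma,\lambda^\pm)$ by the tree-metric displacement of $w^*$. This matches the paper's Claim 1 and the first line of its concluding chain of inequalities.

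The gap is in the step you yourself flag as the main obstacle, namely $d_{\mc{S}^+}(w^*,\rho_{X,Y}(\gamma)w^*)\le d_{\mb{H}^{2,1}}(w^*,\rho_{X,Y}(\gamma)w^*)$. Your proposed mechanism --- relating the tree arc to the translation axis of $\rho_{X,Y}(\gamma)$ in $\mc{S}^+$ and then to the dual axis $\ell^*$ --- does not close it: the arc of the $\mb{R}$-tree joining $w^*$ to its translate is in general neither on the tree axis of $\gamma$ nor contained in $\ell^*$, and in any case comparing a path length (computed via Lemma \ref{lem:path length} as a limit of integrals of chord distances) with the single chord distance between the endpoints requires an inequality going the \emph{opposite} way from the usual triangle inequality. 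The ingredient that makes this work in the paper is a reverse triangle inequality for dual points of supporting planes: if $v<v'<v''$ lie on a common minimizing path inside $\mc{S}^\pm$, then $d_{\mb{H}^{2,1}}(v,v'')\ge d_{\mb{H}^{2,1}}(v,v')+d_{\mb{H}^{2,1}}(v',v'')$ (Lemma 6.3.5 of \cite{BB09}, which rests on an analysis of how the corresponding supporting planes intersect). With this in hand one telescopes the dyadic Riemann sums approximating $L(\alpha)$ for the injective weakly regular path $\alpha$ from $w^*$ to $\rho_{X,Y}(\gamma)w^*$ and bounds every partial sum by $d_{\mb{H}^{2,1}}(w^*,\rho_{X,Y}(\gamma)w^*)$. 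Without this Lorentzian reverse triangle inequality your comparison step has no proof, so the argument as sketched is incomplete at its decisive point.
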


\begin{proof}
Let $[x,x^\pm]$ be a timelike segment, with $x\in\ell,x^\pm\in\ell^\pm\subset\lambda^\pm$ that realizes the maximal timelike distance $\delta^\pm$. Notice that $[x,x^\pm]$ is orthogonal to both $\ell,\ell^\pm$. Let $v\in T_x\mb{H}^{2,1},v^\pm\in\T_{x^\pm}\mb{H}^{2,1}$ be the unit speed timelike vectors tangent to the geodesic $[x,x^\pm]$ at the endpoints. 

\begin{claim}{1}
\label{claim1'}
We have 
\[
\cosh(d_{\mb{H}^{2,1}}(v^\pm,\rho_{X,Y}(\gamma)v^\pm))=\sin(\delta^\pm)^2\cosh(L_\rho(\gamma))+\cos(\delta^\pm)^2 \cosh(\theta_\rho(\gamma)).
\]
\end{claim}

\begin{proof}[Proof of the claim]
Note that 
\[
v^\pm=-\cos(\pi/2-\delta^\pm)x+\sin(\pi/2-\delta^\pm)v
\]
and that $v$ and $\rho_{X,Y}(\gamma)v$ are both orthogonal to the segment $[x,\rho_{X,Y}(\gamma)x]\subset\ell$. The claim follows from Lemma \ref{lem:move orthogonally}.
\end{proof}

\begin{claim}{2}
\label{claim2'}
Let $v,v',v''\in\mb{H}^{2,1}$ be dual to the supporting pairwise distinct planes $H,H',H''$ of $\partial^+\mc{CH}_{X,Y}$.
\begin{enumerate}[(i)]
\item{If  $v,v',v''$ all lie on a common minimizing path inside $\mc{S}^+$, then $H\cap H'\cap H''$ is either empty or equal to a line. The latter happens if and only if $v, v',v''$ lie on a spacelike geodesic segment of $\mathbb{H}^{2,n}$.}
\item{If $v,v',v''$ all lie on a common minimizing path inside $\mc{S}^+$ and $v < v' < v''$, then the reverse triangle inequality holds
\[
d_{\mb{H}^{2,1}}(v,v'')\ge d_{\mb{H}^{2,1}}(v,v')+d_{\mb{H}^{2,1}}(v',v'').
\]
}
\end{enumerate}
\end{claim}

\begin{proof}[Proof of the claim]
Consider the faces $F,F',F''=H,H',H''\cap\partial^+\mc{CH}_{X,Y}$. As $\mc{S}^+$ is an $\mb{R}$-tree, there are two possibilities: Either one of the faces separates the other two on $\partial^+\mc{CH}_{X,Y}$ or there is a unique face $G\subset\partial^+\mc{CH}_{X,Y}$ different from $H, H', H''$ that separates every pair of them. The first case corresponds to the configuration where the dual points $v,v',v''$ lie on a minimizing path inside $\mc{S}^+$. The second case corresponds to the configuration where $v,v',v''$ are the vertices of a tripod in $\mc{S}^+$ with center $w$, the dual point of $G$. Let us consider the first case. In addition, let us assume that $v < v' < v''$ without loss of generality. Then either the lines $H\cap H'$, $H'\cap H''$, and $H \cap H''$ coincide, or $F'$ separates $H\cap H'$ from $H'\cap H''$ in $H'$. Hence the triple intersection $H\cap H'\cap H''$ is either empty or equal to a line.

The second part of the claim follows from Lemma 6.3.5 of \cite{BB09}.
\end{proof}

\begin{claim}{3}
\label{claim3'}
Let $v,w\in\mc{S}^+$ be distinct points. Then
\[
d_{\mc{S}^+}[v,w]\le d_{\mb{H}^{2,1}}(v,w).
\]
\end{claim}

\begin{proof}[Proof of the claim]
Let $\alpha:I=[0,1]\to\mc{S}^+$ be an injective weakly regular path joining $v$ and $w$. By Lemma \ref{lem:path length}, we have
\[
L=\int_I{\vert{\dot \alpha}(t)\vert{ \rm dt}}=\lim_{\ep\to 0}\int_{[0,1-\varepsilon]}{\frac{d_{\mb{H}^{2,1}}(\alpha(t),\alpha(t+\ep))}{\ep}{ \rm dt}}.
\]

If $\ep<\ep_0$ then
\[
\left|\int_I{\frac{d_{\mb{H}^{2,1}}(\alpha(t),\alpha(t+\ep))}{\ep}{ \rm dt}}-L\right|<\delta
\]

Choose $\ep=1/2^k$. For convenience, we take dyadic approximations of the integral with Riemann sums: 
\[
\int_I{\frac{d_{\mb{H}^{2,1}}(\alpha(t),\alpha(t+1/2^k))}{1/2^k}{ \rm dt}}=\lim_{n\to\infty}{\sum_{p=0}^{2^n-2^{n-k}}{\frac{d_{\mb{H}^{2,1}}(\alpha(p/2^n),\alpha(p/2^n+1/2^k))}{1/2^k}}\cdot\frac{1}{2^n}}.
\]

We reorganize the sum as
\begin{align*}
\frac{2^k}{2^n} \cdot \sum_{p=0}^{2^n - 2^{n-k}}& d_{\mb{H}^{2,1}}(\alpha(\tfrac{p}{2^n}),\alpha(\tfrac{p}{2^n}+\tfrac{1}{2^k})) = \\
&=\frac{2^k}{2^n} \cdot \sum_{j=0}^{2^{n-k} - 1}\left(\sum_{q=0}^{2^k-2}{d_{\mb{H}^{2,1}}(\alpha(\tfrac{j}{2^n}+\tfrac{q}{2^k}),\alpha(\tfrac{j}{2^n}+\tfrac{q+1}{2^k}))}\right) \\
&\qquad + \frac{2^k}{2^n} \, d_{\mathbb{H}^{2,1}}(\alpha(1 - \tfrac{1}{2^k}), \alpha(1))  \\
&\leq\frac{2^k}{2^n} \cdot \sum_{j=0}^{2^{n-k} - 1} d_{\mathbb{H}^{2,1}}(\alpha(\tfrac{j}{2^n}), \alpha(\tfrac{j}{2^n} + 1 - \tfrac{1}{2^k})) \tag{Claim \ref{claim2'}} \\
&\qquad + \frac{2^k}{2^n}\, d_{\mathbb{H}^{2,1}}(\alpha(1 - \tfrac{1}{2^k}), \alpha(1)) \\
&\le\frac{2^k}{2^n}\cdot\sum_{j=0}^{2^{n-k} - 1}{d_{\mb{H}^{2,1}}\left(\alpha(0),\alpha(1)\right)}=d_{\mb{H}^{2,1}}\left(v,w\right) \tag{Claim \ref{claim2'}}.
\end{align*}
The assertion follows by taking the limits first as $n \to \infty$, and then as $k \to \infty$.
\end{proof}

We have:
\begin{align*}
\cosh(i(\lambda,\gamma)) &\le\cosh(d_{\mc{S}^+}(v,\rho_{X,Y}(\gamma) v)) \tag{Prop. \ref{pro:singularities}}\\
 &\le\cosh(d_{\mb{H}^{2,1}}(v,\rho_{X,Y}(\gamma) v)) \tag{Claim \ref{claim3'}}\\
 &=\sin(\delta^\pm)^2\cosh(L_\rho(\gamma))+\cos(\delta^\pm)^2\cosh(\theta_\rho(\gamma)) \tag{Claim \ref{claim1'}}.
\end{align*}
\end{proof}

\section{Length functions in Teichmüller space}
\label{sec:sec6}

In this section we carry out an anti de Sitter analysis of length function on Teichmüller space on both global and infinitesimal scales and prove Theorems \ref{thm:length convex} and \ref{thm:earthquakes}.

\subsection{Orthogonal projection to a line}
We begin with some explicit computations on the orthogonal projection $\pi:\mb{H}^{2,1}\to\ell$ to a spacelike geodesic.

\begin{lem}
\label{lem:point line}
Let $y,\ell$ be a point and a spacelike line in $\mb{H}^{2,1}$ such that the rays $[y,\ell^\pm]$ are spacelike. Then
\[
m_{y,\ell}=\min_{x\in\ell}\{-\langle y,x\rangle\})=\sqrt{\frac{2\langle y,\ell^+\rangle\langle y,\ell^-\rangle}{-\langle \ell^+,\ell^-\rangle}}
\]
and it is realized at the unique point 
\[
x=\frac{1}{\sqrt{-2\langle\ell^+,\ell^-\rangle}}\left(\sqrt{\frac{\langle y,\ell^-\rangle}{\langle y,\ell^+\rangle}}\ell^++\sqrt{\frac{\langle y,\ell^+\rangle}{\langle y,\ell^-\rangle}}\ell^-\right)\in\ell
\]
such that $[y,x]$ is orthogonal to $\ell$. 
\end{lem}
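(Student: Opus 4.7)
The plan is to parametrize $\ell$ by its two lightlike endpoints and then reduce the minimization to a one-dimensional elementary inequality.

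First, since $\ell$ is spacelike, the 2-plane $V \subset M_2(\mathbb{R})$ spanning it has signature $(1,1)$, and $\ell^+,\ell^- \in \partial V$ are lightlike with $\langle \ell^\pm,\ell^\pm\rangle = 0$ and $\langle \ell^+,\ell^-\rangle < 0$ (in an appropriate normalization of the lifts to $M_2(\mathbb{R})$). Set $\alpha := -\langle \ell^+,\ell^-\rangle > 0$. A general element of $V$ has the form $s\ell^+ + t\ell^-$, and the constraint $\langle s\ell^++t\ell^-,s\ell^++t\ell^-\rangle = -1$ becomes $2st\alpha = 1$, so I will parametrize
\[
\ell(u) := \frac{1}{\sqrt{2\alpha}}\bigl(e^u\ell^+ + e^{-u}\ell^-\bigr), \qquad u \in \mathbb{R}.
\]

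Next, I would set $A := -\langle y,\ell^+\rangle$ and $B := -\langle y,\ell^-\rangle$, with lifts chosen so that $A,B > 0$; the hypothesis that $[y,\ell^\pm]$ is spacelike ensures that these are nonzero, and consistency of the parametrization forces the positive choice. Then
\[
-\langle y,\ell(u)\rangle = \frac{1}{\sqrt{2\alpha}}\bigl(Ae^u + Be^{-u}\bigr),
\]
and by AM-GM the minimum is $\frac{2\sqrt{AB}}{\sqrt{2\alpha}} = \sqrt{\tfrac{2AB}{\alpha}}$, attained uniquely at $e^{u_\star} = \sqrt{B/A}$. Substituting back into the expressions for $A,B,\alpha$ gives the claimed value of $m_{y,\ell}$ and the claimed formula for the minimizing point $x = \ell(u_\star)$.

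Finally, for the orthogonality assertion, I would show that the tangent vector to the geodesic $[x,y]$ at $x$ is orthogonal to $\ell'(u_\star)$. The tangent direction of $[x,y]$ at $x$ is, up to scaling, the projection of $y$ onto $T_x\mathbb{H}^{2,1} = x^\perp$, namely $y + \langle y,x\rangle x$ (using $\langle x,x\rangle = -1$). Pairing with $\ell'(u_\star)$ gives
\[
\langle y,\ell'(u_\star)\rangle + \langle y,x\rangle\langle x,\ell'(u_\star)\rangle.
\]
The second term vanishes because $\langle \ell(u),\ell(u)\rangle \equiv -1$ yields $\langle \ell(u),\ell'(u)\rangle \equiv 0$, and the first term is precisely the vanishing of $\tfrac{d}{du}\langle y,\ell(u)\rangle$ at the minimum $u_\star$. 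Nothing in this argument is genuinely hard; the only point requiring a bit of care is fixing the lifts of $\ell^\pm \in \partial \mathbb{H}^{2,1} \subset \mathbb{P}(M_2(\mathbb{R}))$ to $M_2(\mathbb{R})$ so that $A,B,\alpha$ are all positive, which is possible precisely under the spacelikeness assumption on $[y,\ell^\pm]$.
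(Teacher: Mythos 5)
Your proposal is correct and follows essentially the same route as the paper: parametrize $\ell(u)=(e^u\ell^++e^{-u}\ell^-)/\sqrt{-2\langle\ell^+,\ell^-\rangle}$ and minimize $-\langle y,\ell(u)\rangle=(Ae^u+Be^{-u})/\sqrt{2\alpha}$, the only cosmetic difference being that you use AM--GM where the paper locates the unique critical point of $f(t)=-\langle\ell(t),y\rangle$. Your explicit verification of the orthogonality claim and of the sign conventions on the lifts of $\ell^\pm$ fills in what the paper dismisses as ``elementary computations,'' which is a welcome addition but not a different argument.
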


\begin{proof}
Write $\ell(t)=(e^t\ell^++e^{-t}\ell^-)/\sqrt{-2\langle\ell^+,\ell^-\rangle}$ and consider the function $f(t)=-\langle\ell(t),y\rangle$. As $[y,\ell^+],[y,\ell^-]$ are spacelike, we have $f(t)\to\infty$ as $|t|\to\infty$. Hence, $f(t)$ has a minimum which is a critical point. The unique critical point of the function is at $e^{2t}=\langle y,\ell^-\rangle/\langle y,\ell^+\rangle$. The conclusion follows by elementary computations.
\end{proof}

\subsection{Convexity of length functions}

We now describe the purely anti de Sitter proof of (strict) convexity of length functions on Teichmüller space $\T$ in shear coordinates for an arbitrary maximal lamination $\lambda\subset\Sigma$.

We prove separately the two parts of Theorem \ref{thm:length convex}.

\begin{pro}
\label{pro:loops}
Let $\lambda\subset\Sigma$ be a maximal lamination. Let $\gamma\in\Gamma-\{1\}$ be a non-trivial loop. The length function $L_\gamma:\T\subset\mc{H}(\lambda;\mb{R})\to(0,\infty)$ is convex. Moreover, convexity is strict if $\gamma$ intersects essentially every leaf of $\lambda$.
\end{pro}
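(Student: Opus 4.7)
The plan is to exploit the fact that the midpoint of shear coordinates corresponds to an intrinsic hyperbolic structure on a pleated surface inside $M_{X,Y}$, and then to compare its length to $L_\rho(\gamma)$ via Theorem \ref{thm:length}(a). Since $\Phi:\T\to\mc{H}(\lambda;\mb{R})$ is a real analytic diffeomorphism onto an open convex cone (Theorem \ref{thm:thurston bonahon}) and $L_\gamma$ is continuous, it suffices to prove the midpoint inequality
\[
L_\gamma(Z)\le\frac{L_\gamma(X)+L_\gamma(Y)}{2}
\]
for every pair $X,Y\in\T$, where $Z\in\T$ is the unique point with $\sigma_\lambda(Z)=(\sigma_\lambda(X)+\sigma_\lambda(Y))/2$. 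By Theorem \ref{thm:pleated surfaces ads}(1), $Z$ coincides with the intrinsic hyperbolic structure $Z_\lambda$ of the pleated surface ${\hat S}_\lambda/\rho(\Gamma)$ for the Mess representation $\rho:=\rho_{X,Y}$.

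To establish this midpoint inequality, I would plug the data into Theorem \ref{thm:length}(a). The axis $\ell\subset\mc{CH}_{X,Y}$ of $\rho(\gamma)$ has translation length $L_\rho(\gamma)=(L_X(\gamma)+L_Y(\gamma))/2$ and torsion $\theta_\rho(\gamma)=|L_X(\gamma)-L_Y(\gamma)|/2$, so $\theta_\rho(\gamma)\le L_\rho(\gamma)$. Denoting by $\delta$ the maximal timelike distance from $\ell$ to ${\hat S}_\lambda$, Theorem \ref{thm:length}(a) yields
\[
\cosh L_{Z_\lambda}(\gamma)\le\cos(\delta)^2\cosh L_\rho(\gamma)+\sin(\delta)^2\cosh\theta_\rho(\gamma)\le\cosh L_\rho(\gamma),
\]
and monotonicity of $\cosh$ on $[0,\infty)$ gives $L_Z(\gamma)=L_{Z_\lambda}(\gamma)\le L_\rho(\gamma)=(L_X(\gamma)+L_Y(\gamma))/2$. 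Midpoint convexity on an open convex subset of a finite-dimensional vector space together with continuity of $L_\gamma$ then upgrades to convexity.

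For strict convexity, assume that $\gamma$ essentially intersects every leaf of $\lambda$ and fix $X\ne Y$ in $\T$. By Theorem \ref{thm:pleated surfaces ads}(2) the intrinsic bending cocycle $\beta_\lambda=(\sigma_\lambda(X)-\sigma_\lambda(Y))/2$ is non-zero, so the bending locus of ${\hat S}_\lambda$, which is a sublamination of $\lambda$ by Proposition \ref{pro:bending locus}, is non-empty and hence contains a full leaf of $\lambda$. By hypothesis $\gamma$ crosses that leaf essentially, so it meets the bending locus, and the ``furthermore'' clause of the structural theorem of \cite{MV22a} recalled in the introduction yields the strict inequality $L_{Z_\lambda}(\gamma)<L_\rho(\gamma)$. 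Applying this strict midpoint inequality to every pair of distinct points in the shear-coordinate segment $[\sigma_\lambda(X),\sigma_\lambda(Y)]$, and combining with continuity, upgrades to strict convexity along the segment.

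The main obstacle I foresee is the bridge between the analytic non-vanishing of $\beta_\lambda$ and the topological non-emptiness of the bending locus: one must verify that a non-trivial intrinsic bending cocycle cannot be ``measurably'' supported on a set that contains no full leaf of $\lambda$. This is guaranteed by Proposition \ref{pro:bending locus} together with the defining property of $\beta_\lambda$ as the cocycle recording the total turning angle across the bending locus, but it is essentially the only step in the argument that is not a direct appeal to a theorem already established in the excerpt; everything else is a chain of citations assembled by elementary manipulation of the hyperbolic cosine.
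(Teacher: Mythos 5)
Your proposal is correct and follows essentially the same route as the paper: reduce to the midpoint inequality in shear coordinates, identify the midpoint with the intrinsic hyperbolic structure $Z_\lambda$ of the pleated surface via Theorem \ref{thm:pleated surfaces ads}, and bound $L_{Z_\lambda}(\gamma)$ by $L_\rho(\gamma)$ using Theorem \ref{thm:length}(a) together with $\theta_\rho(\gamma)\le L_\rho(\gamma)$. The only cosmetic difference is in the strictness step, where the paper argues directly that the maximal timelike distance $\delta$ is positive (so the trigonometric combination is strictly smaller than $\cosh L_\rho(\gamma)$), whereas you invoke the equivalent ``furthermore'' clause of the structural theorem of \cite{MV22a}; both rest on the same facts that the bending locus is a non-empty sublamination when $X\neq Y$ and that $\gamma$ meets every leaf of $\lambda$.
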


\begin{proof}
Recall that a function $L:U\subset\mb{R}^n\to\mb{R}$ defined on an open convex subset $U\subset\mb{R}^n$ is (strictly) convex if and only if for every $x,y\in U$ we have a (strict) inequality
\[
L\left(\frac{x+y}{2}\right)\le\frac{L(x)+L(y)}{2}.
\]
Consider $X,Y\in\T$. Let $\rho_{X,Y}$ be the corresponding Mess representation. Let ${\hat S}_\lambda\subset\mc{CH}_{X,Y}$ be the pleated set associated with $\lambda$ and let $\rho_\lambda:\Gamma\to{\rm PSL}_2(\mb{R})$ be the holonomy of the intrinsic hyperbolic structure $Z_\lambda\in\T$ on ${\hat S}_\lambda/\rho_{X,Y}(\Gamma)$. By Theorem \ref{thm:pleated surfaces ads} we have $Z_\lambda=(X+Y)/2$ in $\mc{H}(\lambda;\mb{R})$. By Theorem \ref{thm:length}, we have 
\[
\cosh(L_{Z_\lambda}(\gamma))\le\cos(\delta)^2\cosh(L_\rho(\gamma))+\sin(\delta)^2\cosh(\theta_\rho(\gamma))
\]
where $\delta$ is the maximal timelike distance of the axis $\ell$ of $\rho_{X,Y}(\gamma)$ from the pleated set ${\hat S}_\lambda$. Notice that $\delta>0$ unless $\ell\subset{\hat S}_\lambda$ in which case $\ell$ does not intersect the bending locus. Also observe that unless $\rho$ is Fuchsian, which happens precisely when $X=Y$, the bending locus cannot be empty. Therefore, if $X,Y$ are distinct and the support of $\gamma\in\mc{C}$ intersects essentially every leaf of $\lambda$ we have $\delta>0$ and 
\[
\cos(\delta)^2\cosh(L_\rho(\gamma))+\sin(\delta)^2\cosh(\theta_\rho(\gamma))<\cosh(L_\rho(\gamma))
\]
as $L_\rho(\gamma)<\theta_\rho(\gamma)$. Since $\cosh(\bullet)$ is strictly increasing on $(0,\infty)$, we conclude $L_Z(\gamma)<L_\rho(\gamma)$.
\end{proof}

\begin{pro}
\label{pro:laminations}
Let $\lambda\subset\Sigma$ be a maximal lamination. Let $\gamma\in\mc{ML}$ be a measured lamination. The length function $L_\gamma:\T\subset\mc{H}(\lambda;\mb{R})\to(0,\infty)$ is convex. Furthermore, convexity is strict if the support of $\gamma$ intersects transversely each leaf of $\lambda$.
\end{pro}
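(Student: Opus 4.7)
The first statement (convexity) follows from Proposition \ref{pro:loops} by approximation. I approximate $\gamma \in \mc{ML}$ by weighted simple closed multicurves $\gamma_n = \sum_i w_{n,i} c_{n,i}$ with $\gamma_n \to \gamma$ in $\mc{ML}$. Applying Proposition \ref{pro:loops} to each component and using linearity of the length in the current variable,
\[
L_{Z_\lambda}(\gamma_n) = \sum_i w_{n,i} L_{Z_\lambda}(c_{n,i}) \leq \sum_i w_{n,i} \frac{L_X(c_{n,i}) + L_Y(c_{n,i})}{2} = \frac{L_X(\gamma_n) + L_Y(\gamma_n)}{2}.
\]
Passing to the limit using continuity of length functions on $\mc{ML}$, and noting that by Theorem \ref{thm:pleated surfaces ads} the shear coordinate $\sigma_\lambda(Z_\lambda) = (\sigma_\lambda(X) + \sigma_\lambda(Y))/2$ is the midpoint, yields midpoint-convexity, hence convexity.

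For strict convexity, assume $X \neq Y$ and that the support of $\gamma$ transversely crosses every leaf of $\lambda$. Since $\rho_{X,Y}$ is non-Fuchsian, the bending locus $\mu \subset \lambda$ of $\hat{S}_\lambda$ is nonempty, and $\text{support}(\gamma)$ must transversely cross every leaf of $\mu$. The plan is to combine Proposition \ref{pro:lengthA} with Lemma \ref{lem:analysis}(1) to extract a \emph{uniform} gap for closed curves whose axes pass at positive timelike distance from $\hat{S}_\lambda$: whenever the axis $\ell_c$ satisfies $\delta_c \geq b_0 > 0$ and $\min(L_X(c), L_Y(c)) \geq a_0 > 0$, there exists $\kappa = \kappa(a_0, b_0) > 0$ such that $L_{Z_\lambda}(c) \leq L_\rho(c) - \kappa$. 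The goal is then to choose the approximating multicurves $\gamma_n$ so that a definite fraction of the total weighted length comes from components satisfying such uniform bounds, and to pass to the limit to obtain $L_{Z_\lambda}(\gamma) < (L_X(\gamma) + L_Y(\gamma))/2$.

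The main obstacle, corresponding to the \emph{significantly more refined argument} advertised in the introduction, is that the single-axis maximum distance $\delta_c$ generically degenerates to zero along sequences of long closed curves approximating $\gamma$, as the axes tend to hug the realization of $\gamma$ on $\hat{S}_\lambda$. Overcoming this requires a localization of Proposition \ref{pro:lengthA}: rather than reading the estimate at the single point of maximum distance, one decomposes $\ell_c$ into sub-arcs and localizes the construction of Proposition \ref{pro:lengthA} (replacing the global point of maximum distance by sub-arc-wise closest point projections to $\hat{S}_\lambda$), obtaining an additive gap in the length contributed by each sub-arc that passes at timelike distance $\geq b_0$ from $\hat{S}_\lambda$. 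The transverse crossings of $\gamma$ with the bending locus $\mu$ then force a positive density of such sub-arcs, with total contribution bounded below proportionally to the intersection of $\gamma$ with $\mu$. This accumulated gap survives passage to the limit $\gamma_n \to \gamma$, yielding the desired strict inequality.
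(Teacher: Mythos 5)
Your treatment of plain convexity is exactly the paper's: approximate by weighted multicurves, apply Proposition \ref{pro:loops} componentwise, and pass to the limit using continuity of length functions on $\mc{ML}$; that part is fine. For strict convexity, however, your diagnosis of the obstacle is wrong, and the technical core of the argument is missing. The maximal timelike distance of the axis of $\rho(\gamma_n)$ from $\hat S_\lambda$ does \emph{not} degenerate: if $\ell$ is a leaf of ${\rm support}(\gamma)$ crossing the bending locus, its realization $\hat\ell$ sits at timelike distance $\delta>0$ from $\hat S_\lambda$ (it is not contained in $\hat S_\lambda$, contrary to your phrase ``hug the realization of $\gamma$ on $\hat S_\lambda$''), and translates of the axes of the approximating curves $\gamma_n$ accumulate on $\hat\ell$, so the maximal distance stays $\ge\delta/2$ for large $n$. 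The true obstruction is quantitative: a single application of Proposition \ref{pro:lengthA} together with Lemma \ref{lem:analysis}(1) yields $L_{Z_\lambda}(\gamma_n)\le L_\rho(\gamma_n)-\kappa$ with a gap $\kappa$ of size $O(1)$ \emph{per closed curve}, and after multiplying by the weights $a_n\to 0$ this gap vanishes in the limit. What one needs is a gap growing linearly in the number $m_n$ of strands of $\hat\gamma_n$ passing through a fixed neighborhood $K(\ep)$ of $\ell$ in the space of geodesics, since $a_n m_n\to\gamma(K(\ep))>0$.

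Your proposed remedy (localize and accumulate gaps proportional to the crossings of the bending locus) has the right shape and is indeed what the paper does, but every substantive step is left open. The paper decomposes the group element $\gamma_n$ as a product $\alpha_{n,m_n}\cdots\alpha_{n,1}$ of elements whose axes all lie in translates of $K(\ep)$; note that one must work with group elements rather than sub-arcs, since translation length is a global invariant of an isometry and there is no ready-made notion of ``length contributed by a sub-arc.'' Verifying that the axes of the $\alpha_{n,j}$ really stay in $K(\ep)$ requires stability of quasi-geodesics in the pleated sets together with uniform equicontinuity of the boundary maps over the compact family of intrinsic hyperbolic structures (Lemma \ref{lem:pleated cpt}); each factor then contributes a uniform gap $\kappa$ via the mechanism of Proposition \ref{pro:lengthA} applied at a point $y_j\in\hat S_\lambda$ at timelike distance $\ge\delta/2$ from the axis of $\alpha_{n,j}$, chained by the triangle inequality on $\hat S_\lambda$ and the $1$-Lipschitz developing map; and one needs the near-additivity $\sum_j L_\rho(\alpha_{n,j})\le L_\rho(\gamma_n)+Cm_n\ep$ so that, after choosing $\ep$ small compared with $\kappa$, the accumulated gap $m_n\kappa/2$ survives multiplication by $a_n$. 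None of these steps is routine, and without them the plan does not yet constitute a proof.
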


\begin{proof}
We immediately deduce convexity by Proposition \ref{pro:loops} and density of weighted simple curves in $\mc{ML}$ and $\mc{C}^\infty$-convergence of length functions $L_{\gamma_n}\to L_\gamma$ if $\gamma_n\to\gamma$ in $\mc{ML}$.

We now discuss strict convexity. 

Consider $X,Y\in\T$ and the Mess representation $\rho:=\rho_{X,Y}$. Let ${\hat S}_\lambda\subset\mc{CH}_{X,Y}$ be the pleated set associated with $\lambda$. Let $\gamma\in\mc{ML}$ be a measured lamination whose support contains a leaf $\ell$ that intersects the bending locus of ${\hat S}_\lambda$ (which is non-empty unless the representation is Fuchsian).

Since $\ell$ intersects the bending locus, its geometric realization ${\hat \ell}$ is not contained on ${\hat S}_\lambda$. Let $x\in{\hat \ell}$ and $y\in{\hat S_\lambda}$ be points that realize the maximal timelike distance $\delta=\max\{\delta_{\mb{H}^{2,1}}(z,t)\left|\;z\in{\hat S}_\lambda,t\in{\hat \ell}\right.\}>0$.  

Let $K:=I\times J$ denote the neighborhood of $\ell$ in the space of geodesics $\mc{G}$ consisting of those lines with one endpoint in $I$ and another endpoint in $J$.

Recall that, by Lemma \ref{lem:point line}, we have
\[
m_{z,\ell}:=\min_{t\in\hat{\ell}}\{-\langle z,t\rangle\})=\sqrt{\frac{2\langle z,\hat{\ell}^+\rangle\langle z,\hat{\ell}^-\rangle}{-\langle \hat{\ell}^+,\hat{\ell}^-\rangle}}
\] 
and that the minimum is realized at a point $\pi(z)\in\hat{\ell}$, the orthogonal projection of $z$ to $\hat{\ell}$, described explicitly by
\[
\pi(z)=\frac{1}{\sqrt{-2\langle\hat{\ell}^+,\hat{\ell}^-\rangle}}\left(\sqrt{\frac{\langle z,\hat{\ell}^-\rangle}{\langle z,\hat{\ell}^+\rangle}}\hat{\ell}^+ + \sqrt{\frac{\langle z, \hat{\ell}^+\rangle}{\langle z,\hat{\ell}^-\rangle}}\hat{\ell}^-\right).
\]
As $y$ is connected to $\hat{\ell}$ by a timelike segment of length $\delta$ orthogonal to $\hat{\ell}$, we have $m_{y,\ell}=\cos(\delta)$. As $x\in\hat{\ell}$ we have $m_{x,\ell}=1$. By continuity of the above expressions, we have:

\begin{claim}{1}
\label{claim1} 
For every $\ep > 0$, there exist a neighborhood $K(\ep)=I(\ep) \times J(\ep)$ of $\ell\in\mc{G}$ and a neighborhood $U(\ep)$ of $x$ in $\mb{H}^{2,1}$ with the following properties:
\begin{enumerate}[(i)]
\item{$m_{y,\ell'}\in(\cos(2\delta),\cos(\delta/2))$ for every $\ell'\in K(\ep)$. In particular, $y$ is connected to every $\hat{\ell}'$ by a timelike segment of length at least $\delta/2$ and, hence, $\delta_{\mb{H}^{2,1}}(y,\hat{\ell}')\ge\delta/2$.}
\item{For every $\ell'\in K(\ep)$, $\hat{\ell}'$ intersects $U(\ep)$.}
\item{If $\ell_1,\ell_2\in K(\ep)$ are so that $\hat{\ell}_1\cup\hat{\ell}_2$ is acausal and $z_j\in\hat{\ell}_j\cap U(\ep)$, then $d_{\mb{H}^{2,1}}(z_1,z_2)<\ep$.}
\item{$m_{z,\ell'}\in(\cos(\ep),\cosh(\ep))$ for every $\ell'\in K(\ep)$ and $z\in U(\ep)$.}
\item{For every $z,w\in U(\ep)$ and $\ell'\in K(\ep)$, we have $d_{\mb{H}^{2,1}}(\pi'(z),\pi'(w))<\ep$ where $\pi'$ is the orthogonal projection onto $\hat{\ell}'$.}
\end{enumerate}
\end{claim}

Let $K(\ep)$ and $U(\ep)$ be the neighborhoods provided by the claim. 

As $\ell$ lies in the support of $\gamma$, we have $m(\ep):=\gamma(K(\ep)) > 0$. 

We approximate $\gamma$ in $\mc{ML}$ with a sequence of weighted simple closed curves $a_n\gamma_n$. By convergence of $a_n\gamma_n$ to $\gamma$, we have $a_n m_{n}(\ep) := a_n\gamma_n(K(\ep))\to m(\ep)$. Notice that $m_n = m_{n}(\ep)$ is the number of distinct leaves of the geometric realization ${\hat \gamma}_n$ contained in $K(\ep)$. Let $\ell_n$ be one of those leaves.

\begin{claim}{2}
\label{claim2}
There exists a constant $M > 1$ that depends only on the representation $\rho$ such that for any $\ep > 0$, $n \in \mathbb{N}$, and $\ell_n$ as above, we can find elements
\[
\alpha_{n,1},\cdots,\alpha_{n,m_{n}}\in\Gamma
\]
and corresponding points
\[
z_{n,0} < z_{n,1}<\cdots<z_{n,m_{n} - 1} < z_{n,m_n} = \rho(\gamma_n) z_{n,0}
\]
on ${\hat \ell}_n$ with the following properties: 
\begin{enumerate}[(i)]
\item{$\alpha_{n,m_{n}}\cdots\alpha_{n,1}=\gamma_n$.}
\item{$z_{n,0}, \rho(\alpha_{n,j}\cdots\alpha_{n,1})^{-1} z_{n,j} \in U(\ep/M)$ and $$d_{\mb{H}^{2,1}}(z_{n,j},\rho(\alpha_{n,j}\cdots\alpha_{n,1})z_{n,0})<\ep/M < \epsilon$$
for every $j \in \{1, \dots, m_n\}$.}
\item{The axis of $\alpha_{n,j}$ lies inside $\alpha_{n,j-1}\cdots\alpha_{n,1}(K(\ep))$.}
\end{enumerate}
\end{claim}

\begin{proof}[Proof of the claim] 
	
Let us start by applying Claim \ref{claim1} to an arbitrary $\ep' > 0$, and denote by
\[
\ell_n=\ell_{n,0},\cdots,\ell_{n,m_n - 1}
\]
the $m_n$ translates of $\ell_n$ contained in $K(\ep')$. We will later determine sufficient conditions on $\epsilon'$ (in term of $\epsilon$) that guarantee the desired properties.

By Claim \ref{claim1} part (ii), for every $j \in \{0, \dots, m_n - 1\}$ there exists some point $w_{n,j} \in \hat{\ell}_{n,j} \cap U(\ep')$. Since the leaves $\ell_{n,j}$ are in the same $\Gamma$-orbit, we can find elements $\beta_{n,j} \in \Gamma$ such that $z_{n,j}:=\rho(\beta_{n,j})\, w_{n,j}$ belongs to the spacelike geodesic segment $[w_{n,0}, \rho(\gamma_n) w_{n,0}] \subset \hat{\ell}_n$. Notice that $\beta_{n,0} = \mathrm{id} \in \Gamma$ and $z_{n,0} = w_{n,0}$. We also set $\beta_{n,m_n} := \gamma_n$, $z_{n,m_n} : = \rho(\gamma_n) w_{n,0}$. Up to reindexing the leaves $\ell_{n,j}$, we can assume that the points $z_{n,j}$ appear in linear order along $[w_{n,0}, \rho(\gamma_n) w_{n,0}]$, that is
\[
w_{n,0} = z_{n,0} < z_{n,1} < \dots < z_{n,m_n-1} < z_{n,m_n} = \rho(\gamma_n) w_{n,0} .
\]
For any $j \in \{1, \dots, m_n\}$, we then define $\alpha_{j,n} : = \beta_{n,j} \beta_{n,j-1}^{-1}$.

\vspace{1em}

\noindent {\bf Property (i).} The identity $\alpha_{n,m_n} \cdots \alpha_{n,1} = \beta_{n,m_n} = \gamma_n$ follows directly from our construction.

\vspace{1em}

\noindent {\bf Property (ii).} Notice that
\[
w_{n,0} \in U(\ep') \cap \hat{\ell}_{n,0}, \; w_{n,j} = \rho(\beta_{n,j})^{-1} z_{n,j} = \rho(\alpha_{n,j} \cdots \alpha_{n,1})^{-1} z_{n,j} \in U(\ep') \cap \hat{\ell}_{n,j} .
\]
Since the leaves $\ell_{n,0}$ and $\ell_{n,j}$ are lifts of a common simple closed curve in $S$, the union of their geometric realizations $\hat{\ell}_{n,0} \cup \hat{\ell}_{n,j}$ is acausal. In particular, it follows from Claim \ref{claim1} part (iii) that
\[
d_{\mathbb{H}^{2,1}}(\rho(\alpha_j \cdots \alpha_1) z_{n,0},z_{n,j}) = d_{\mathbb{H}^{2,1}}(w_{n,0}, \rho(\alpha_{n,j} \cdots \alpha_{n,1})^{-1} z_{n,j}) < \ep' .
\]

\vspace{1em}

\noindent {\bf Property (iii).} We deduce the last property from the stability of quasi-geodesics inside ${\hat S}_n$, the pleated set associated with the lamination $\lambda_n$ consisting of the closed geodesic $\gamma_n$ suitably completed to a maximal lamination of $\Sigma$ by adding finitely many leaves spiraling around $\gamma_n$. 

Let us explain how: For any $j \in \{1,\dots, m_n\}$, consider the concatenation of the translates 
\[
g_{n,j} = \bigcup_{k\in\mb{Z}} {\rho(\alpha_{n,j})^k\left([z_{n,j-1},z_{n,j}]_{{\hat S}_n}\cup[z_{n,j},\rho(\alpha_{n,j}) z_{n,j -1}]_{{\hat S}_n}\right)} ,
\]
where $[a,b]_{\hat{S}_n}$ denotes the length-minimizing path inside $\hat{S}_n$ between $a, b \in \hat{S}_n$. Notice that
\begin{align*}
	d_{\hat{S}_n}(z_{n,j},\rho(\alpha_{n,j}) z_{n,j -1}) & = d_{\hat{S}_n}(\rho(\beta_{n,j})^{-1} z_{n,j}, \rho(\beta_{n,j-1}^{-1}) \, z_{n,j -1}) \\
	& = d_{\hat{S}_n}(w_{n,j}, w_{n,j-1}) \\
	& \leq d_{\mathbb{H}^{2,1}}(w_{n,j}, w_{n,j-1}) \tag{$1$-Lipschitz dev. map} \\
	& < \epsilon' ,
\end{align*}
where in the last step we applied Claim \ref{claim1} part (iii) to $w_{n,j - 1} \in U(\ep') \cap \hat{\ell}_{n,j - 1}$, $w_{n,j} \in U(\ep') \cap \hat{\ell}_{n,j}$. By basic hyperbolic geometry, $g_{n,j}$ is a uniform quasi-geodesic on ${\hat S}_n$ with respect to the intrinsic hyperbolic metric, with quasi-geodesic constants that are $O(\ep')$-close to $1$ (see for example Section I.4.2 of \cite{CEG}). Hence, the invariant axis of $\rho(\alpha_{n,j})$ on ${\hat S}_n$ lies in the $O(\ep')$-neighborhood of $g_{n,j}$ with respect to the hyperbolic metric. In particular such endpoints are close to the endpoints of $(\alpha_{n,j -1} \cdots \alpha_{n,1})\ell_n$ on the Gromov boundary $\partial_\infty{\hat S}_n$.

Let $\phi_n:\partial_\infty{\hat S}_n\to\partial\Gamma$ be the unique $\Gamma$-equivariant homeomorphism. By Lemma \ref{lem:pleated cpt}, the hyperbolic structures ${\hat S}_n/\rho(\Gamma)$ lie in a compact subspace of Teichmüller space $\T$. Thus, as the boundary maps $\phi_n$ depend continuously on $S_n$, they are uniformly equicontinuous. In particular, it is not restrictive to assume that the function $O(\ep')$ is independent of the hyperbolic structure of $S_n$, the leaf $\ell_n$, and the weighted simple closed curves $a_n \gamma_n$. It follows that, if $\ep'$ is small enough (or, equivalently, if $M > 1$ is sufficiently large and $\ep' : = \ep / M$), the endpoints of the axis of $\alpha_j$ are contained in $K(\ep)$ for every $j \leq m_n$.
\end{proof}

Let $\alpha_j = \alpha_{n,j} \in \Gamma$ and $z_j = z_{n,j} \in \hat{\ell}_n$ be the elements provided by Claim \ref{claim3}, and define $x_j:=\rho(\alpha_j\cdots\alpha_1)x$ and $y_j=\rho(\alpha_j\cdots\alpha_1)y$, where $x_0 : = x \in \hat{\ell}$ and $y_0 : = y \in \hat{S}_\lambda$ maximize the timelike distance between $\hat{\ell}$ and $\hat{S}_\lambda$.

For any $j \in \{0,\dots, m_n - 1\}$, let $\delta_j:=\delta_{\mb{H}^{2,1}}(y_{j},\hat{\ell}_{\alpha_{j+1}})$ be the timelike distance of $y_j \in {\hat S}_\lambda$ from $\hat{\ell}_{\alpha_{j+1}}$, the axis of $\rho(\alpha_{j+1})$. By Property (iii) of Claim \ref{claim2}, we have $(\alpha_{j}\cdots\alpha_1)^{-1}\ell_{\alpha_{j+1}}\in K(\epsilon)$. Hence, Claim \ref{claim1} part (i) implies that 
\begin{equation}\label{eq:lower bound delta}
	\delta_j=\delta_{\mb{H}^{2,1}}(y_{j},\hat{\ell}_{\alpha_{j+1}})=\delta_{\mb{H}^{2,1}}(y,\rho(\alpha_{j}\cdots\alpha_1)^{-1}\hat{\ell}_{\alpha_{j+1}})>\delta/2.
\end{equation}

\begin{claim}{3}
\label{claim3} 
There exists $\kappa > 0$, depending only on $X, Y \in \T$ and on $\delta > 0$, such that for every $j \in \{0,\dots, m_n - 1\}$
\begin{align*}
\cosh(d_{\mb{H}^{2,1}}(y_{j},y_{j+1}))=\cos(\delta_j)^2\cosh(L_\rho(\alpha_{j+1}))+\sin(\delta_{j})^2\cosh(\theta_\rho(\alpha_{j+1}))\\
\le\cosh(L_\rho(\alpha_{j+1})-\kappa).
\end{align*}
\end{claim}

\begin{proof}[Proof of the claim]
Let $\pi_{j+1}(p)\in \hat{\ell}_{\alpha_{j+1}}$ be the unique point such that the segment $[p,\pi_{j+1}(p)]$ is orthogonal to $\hat{\ell}_{\alpha_{j+1}}$. Observe that $\pi_{j+1}(y_{j+1}) = \rho(\alpha_{j+1})\pi_{j+1}(y_j)$. Lemma \ref{lem:move orthogonally} applied to the spacelike segment $[\pi_{j+1}(y_j),\pi_{j+1}(y_{j+1})]$ of length $L_\rho(\alpha_{j+1})$ and the orthogonal timelike segments $$[y_j,\pi_{j+1}(y_j)], \quad \rho(\alpha_{j+1})[y_j,\pi_{j+1}(y_j)] = [y_{j+1},\rho(\alpha_{j+1})\pi_{j+1}(y_j)], $$
implies the first identity of our statement. To deduce the upper bound, we argue as follows: by relation \eqref{eq:lower bound delta} $\delta_j > \delta/2$ for every $j \in \{0, \dots, m_n - 1\}$ and
\[
L_\rho(\alpha_{j + 1}) - \theta_\rho(\alpha_{j + 1}) = L_Y(\alpha_j) \geq \min\{\mathrm{sys}(X), \mathrm{sys}(Y)\} =: r > 0 ,
\]
where $\rho = \rho_{X,Y}$ and $\mathrm{sys}(Z) > 0$ denotes the systole of the hyperbolic structure $Z \in \T$. In particular it follows that
\begin{align*}
	\cosh(d_{\mb{H}^{2,1}}(y_{j},y_{j+1})) =\cos(\delta_j)^2\cosh(L_\rho(\alpha_{j+1}))+\sin(\delta_{j})^2\cosh(\theta_\rho(\alpha_{j+1}))\\
	\le \cos(\delta_j)^2\cosh(L_\rho(\alpha_{j+1}))+\sin(\delta_{j})^2\cosh(L_\rho(\alpha_{j+1}) - r) .
\end{align*}
On the other hand, we have
\[
L_\rho(\alpha_{j+1}) = \frac{1}{2} (L_X(\alpha_{j + 1}) + L_Y(\alpha_{j + 1})) \geq r
\]
for every $j \in \{0, \dots, m_n - 1\}$. The conclusion now follows from Lemma \ref{lem:analysis} part (1) applied to $a_0 = r$, $b_0 = \delta/2$, $a = L_\rho(\alpha_{j+1})$ and $b = \delta_j$. Notice in particular that the resulting $\kappa$ depends only on $r$ and $\delta$, as desired.
\end{proof}

\begin{claim}{4}
\label{claim4}
There exist constants $C , \ep_0 > 0$ that depend only on the representation $\rho$ such that for every $\ep \in (0,\ep_0)$ and for every $j \in \{0, \dots, m_n - 1\}$ as in Claim \ref{claim2} we have
\[
L_\rho(\alpha_{j + 1})- d_{\mb{H}^{2,1}}(z_{j},z_{j+1})\le C \ep.
\]
\end{claim}

\begin{proof}[Proof of the claim] 
		As a first step, we show that
		$$|d_{\mathbb{H}^{2,1}}(\rho(\alpha_{j+1}) z_j, z_j) - d_{\mathbb{H}^{2,1}}(z_{j+1}, z_j)|\leq d_{\mathbb{H}^{2,1}}(\rho(\alpha_{j+1}) z_j, z_{j+1}) .$$ To see this, notice that $z_{j}$, $z_{j+1}$, and $\rho(\alpha_{j+1}) z_{j}$ lie on a common spacelike plane, since $z_{j}, z_{j+1} \in \hat{\ell}_n$ and $\rho(\alpha_{j+1}) z_{j}$ belongs to $\rho(\alpha_{j+1}) \hat{\ell}_n$, which are entirely contained in $\hat{S}_n$. Therefore, the inequality is a reformulation of the standard triangle inequality. Since both points $\rho(\alpha_{j+1}) z_{j}, z_{j+1}$ lie inside $\rho(\alpha_{j+1} \cdots \alpha_1) U(\ep/M)$, it follows from Claim \ref{claim2} part (ii) and Claim \ref{claim1} that
		$$|d_{\mathbb{H}^{2,1}}(\rho(\alpha_{j+1}) z_{j}, z_{j}) - d_{\mathbb{H}^{2,1}}(z_{j+1}, z_{j})| < \ep/M .
		$$
		We now prove that $d_{\mathbb{H}^{2,1}}(\rho(\alpha_{j+1}) z_{j}, z_{j}) > L_\rho(\alpha_{j+1}) - O(\ep)$. Consider the orthogonal projection $\pi_{j+1}$ onto $\hat{\ell}_{\alpha_{j+1}}$. By Claim \ref{claim2} part (ii) and \ref{claim1} part (iv), the quantity
		\[
		D_{j}=\min_{t\in\hat{\ell}_{\alpha_{j+1}}}\{-\langle z_{j},t\rangle\}=\sqrt{\frac{2\langle z_{j},\hat{\ell}_{\alpha_{j+1}}^+\rangle\langle z_{j},\hat{\ell}_{\alpha_{j+1}}^-\rangle}{-\langle\hat{\ell}_{\alpha_{j+1}}^+,\hat{\ell}_{\alpha_{j+1}}^-\rangle}}=-\langle z_{j},\pi_{j+1}(z_{j})\rangle.
		\]
		is contained in the interval $(\cos(\ep/M),\cosh(\ep/M))$.
	
%


If $D_j>1$, then the segment $[z_{j},\pi_{j+1}(z_{j})]$ is spacelike. Write $D_j=\cosh(d_j)$ and $z_{j}=\cosh(d_j)\,\pi_{j+1}(z_{j})+\sinh(d_j)\,v_j$ with $v_j$ orthogonal to $\hat{\ell}_{\alpha_{j+1}}$ at $\pi_{j+1}(z_{j})$. We have
\begin{align*}
&\cosh(d_{\mb{H}^{2,1}}(z_{j},\rho(\alpha_{j+1})z_{j}))=-\langle z_{j},\rho(\alpha_{j+1})z_{j}\rangle\\
&=\cosh(d_j)^2\cosh(L_\rho(\alpha_{j+1}))-\sinh(d_j)^2\cosh(\theta_\rho(\alpha_{j+1})).
\end{align*}

Hence, as $L_\rho(\alpha_{j+1})>\theta_\rho(\alpha_{j+1})$, we get $$\cosh(d_{\mb{H}^{2,1}}(z_{j},\rho(\alpha_{j+1})z_{j}))>\cosh(L_\rho(\alpha_{j+1})).$$

If $D_j<1$, then the segment $[z_{j},\pi_{j+1}(z_j)]$ is timelike. Write $D_j=\cos(d_j)$ and $z_{j}=\cos(d_j)\pi_{j+1}(z_{j})+\sin(d_j)v_j$ with $v_j$ orthogonal to $\hat{\ell}_{\alpha_{j+1}}$ at $\pi_{j+1}(z_{j})$. We have
\begin{align*}
&\cosh(d_{\mb{H}^{2,1}}(z_{j},\rho(\alpha_{j+1})z_{j}))=-\langle z_{j},\rho(\alpha_{j+1})z_{j}\rangle\\
&=\cos(d_j)^2\cosh(L_\rho(\alpha_{j+1}))+\sin(d_j)^2\cosh(\theta_\rho(\alpha_{j+1})).
\end{align*}
Thus
\[
\cosh(d_{\mb{H}^{2,1}}(z_{j},\rho(\alpha_{j+1})z_{j}))>\cos(d_j)^2 \cosh(L_\rho(\alpha_{j+1}))
\]
Let now $r > 0$ be the systole of the representation $\rho$. By part (2) of Lemma \ref{lem:analysis} applied to $a_0 = r$, there exists a constant $c_0 \in (0,1)$ such that
\[
\cos(d_j)^2\cosh(L_\rho(\alpha_{j+1})) \geq \cosh(L_\rho(\alpha_{j+1}) - \eta(\cos(d_j)^2)) ,
\]
for every $j$ that satisfies $\cos(d_j)^2 \geq c_0$, where $\eta(c) : = \mathrm{arccosh}(1/c)$. Since $d_j \leq \ep/M$, there exists a $\ep_0$ such that for any $\ep \leq \ep_0$ the condition $\cos(d_j)^2 \geq c_0$ holds for any $j$. A simple estimate shows that $\eta(c) \leq \frac{3}{c} \sqrt{1 - c}$ for every $c \in (0,1)$, implying that
\begin{align*}
	L_\rho(\alpha_{j+1}) - \eta(\cos(d_j)^2) & \geq L_\rho(\alpha_{j+1}) - \frac{3 \sin(d_j)}{\cos(d_j)^2} \\
	& \geq L_\rho(\alpha_{j+1}) - \frac{3 \sin(\ep/M)}{\cos(\ep/M)^2} \\
	& \geq L_\rho(\alpha_{j+1}) - C\ep ,
\end{align*}
for some constant $C > 0$ depending only on $\epsilon_0, M > 0$. Finally, combining the estimates obtained above, we conclude that
\begin{align*}
	\cosh(d_{\mb{H}^{2,1}}(z_{j},\rho(\alpha_{j+1})z_{j}))& > \cos(d_j)^2 \cosh(L_\rho(\alpha_{j+1})) \\
	& \geq \cosh(L_\rho(\alpha_{j+1}) - C\ep)
\end{align*}
for every $j \in \{0,\dots, m_n - 1\}$, which concludes the proof of the assertion. 
\end{proof}

Let $\kappa, \ep_0, C > 0$ be the constants provided by Claims \ref{claim3} and \ref{claim4} and choose $\ep = \frac{1}{2} \min\{\ep_0, \kappa/C\}$. Then for every $n \in \mathbb{N}$ we have
\begin{align*}
L_Z(\gamma_n) &\le d_{{\hat S}}(y,\gamma_ny) \\
 &\le\sum_j{d_{{\hat S}}(y_{j},y_{j+1})} \tag{Triangle inequality}\\
 &\le\sum_j{d_{\mb{H}^{2,1}}(y_{j},y_{j+1})} \tag{$1$-Lipschitz dev. map}\\
 &\le\sum_j{(L_\rho(\alpha_{j+1})-\kappa)} \tag{Claim \ref{claim3}}\\
 &\le\sum_j{(d_{\mb{H}^{2,1}}(z_{j},z_{j+1})+ C \ep - \kappa)} \tag{Claim \ref{claim4}}\\
 &\leq L_\rho(\gamma_n) - \frac{m_n \kappa}{2} .
\end{align*}
Multiplying by $a_n$ and taking the limit as $n \to \infty$, we deduce the desired assertion.
\end{proof}

\subsection{Second variation along earthquakes}
In the case of earthquakes, we make quantitative estimates and compute the second variation of length functions as given in Theorem \ref{thm:earthquakes}. 


\begin{proof}[Proof of Theorem \ref{thm:earthquakes}]
Let $Z_t:=E_\lambda(t)$, consider the Mess representation $\rho_t:=\rho_{Z_{-t},Z_t}$ with parameters $Z_{-t},Z_t\in\T$. Notice that, by Theorem \ref{thm:pleated surfaces ads}, we have $\lambda_t^+=t\lambda$ and $Z_{\lambda_t^+}=Z$ is constant. 

For convenience, we introduce $L_t:=L_{\rho_t}(\gamma)$ and $\theta_t:=\theta_{\rho_t}(\gamma)$.

By Propositions \ref{pro:lengthA} and \ref{pro:lengthB}, we have
\[
\cosh(L_Z(\gamma))\le\cos(\delta_t^\pm)^2\cosh(L_t)+\sin(\delta_t^\pm)^2\cosh(\theta_t)
\]
and
\[
\cosh(i(\lambda_t^\pm,\gamma))\le\sin(\delta_t^\pm)^2\cosh(L_t)+\cos(\delta_t^\pm)^2\cosh(\theta_t).
\]

Summing the inequalities, we get
\[
\cosh(t\cdot i(\lambda^+,\gamma))-\cosh(\theta_t)\le\cosh(L_t)-\cosh(L_Z(\gamma)).
\]
By the mean value theorem, we can write
\[
\cosh(t\cdot i(\lambda^+,\gamma))-\cosh(\theta_t)=\sinh(\xi_t)\left(t\cdot i(\lambda^+,\gamma)-\left|\theta_t\right|\right)
\]
where $\xi_t\in[\left|\theta_t\right|,t\cdot i(\lambda^+,\gamma)]$, and
\[
\cosh(L_t)-\cosh(L_Z(\gamma))=\sinh(\zeta_t)\left(L_t-L_Z(\gamma)\right)
\]
where $\zeta_t\in[\ell_Z(\gamma),L_t]$.

We now divide both right and left hand side by $t^2$ as follows 
\[
\frac{\sinh(\xi_t)}{t}\left(i(\lambda^+,\gamma)-\frac{\left|\theta_t\right|}{t}\right)\le\sinh(\zeta_t)\frac{L_t-\ell_Z(\gamma)}{t^2}
\]
and we observe that as $t\to 0$ the terms converge to: In the left hand side,
\begin{itemize}
\item{$\left|\theta_t\right|/t=\left|L_\gamma(t)-L_\gamma(-t)\right|/2t\to{\dot L}_\gamma$.}
\item{$\sinh(\xi_t)/t\ge\sinh(\left|\theta_t\right|)/t$ as $\xi_t\ge\left|\theta_t\right|$.}
\item{$\sinh(\left|\theta_t\right|)/t\to\cosh(\theta_0){\dot\theta}_0={\dot L}_\gamma$.}
\end{itemize}

In the right hand side,
\begin{itemize}
\item{$\sinh(\zeta_t)\to\sinh(L_\gamma(Z))$ as $L_t=(L_\gamma(t)+L_\gamma(-t))/2\to L_Z(\gamma)$.}
\item{$(L_t-L_Z(\gamma))/t^2=(L_\gamma(t)+L_\gamma(-t)-2L_Z(\gamma))/2t^2\to{\ddot L}_\gamma/2$.}
\end{itemize}
The conclusion follows.

\end{proof}

Let us conclude by recalling that an exact formula for the first variation of length functions along earthquakes has been computed by Kerckhoff \cite{K83}:

\begin{thm}[Kerckhoff \cite{K83}]
\label{thm:kerckhoff}
Let $\lambda\in\mc{ML}$ be a measured lamination. Let $E_\lambda:[a,b]\to\T$ be an earthquake path driven by $\lambda$. Let $\gamma\in\Gamma-\{1\}$ be a non-trivial loop. Set $L_\gamma(t):=L_\gamma(E_\lambda(t))$. We have: 
\[
{\dot L}_\gamma=\int_{\mc{I}/\rho_{E_\lambda(t)}(\Gamma)}{\cos(\theta)\;{ \rm d}\lambda\times\delta_\gamma}
\]
where $\mc{I}$ is the space of intersecting geodesics of $\mb{H}^2$ and $\theta(\ell,\ell')$ is the angle of intersection between the geodesics $\ell,\ell'$.
\end{thm}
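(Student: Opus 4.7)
The plan is to reduce to the case of a single weighted simple closed curve via density, and then to carry out an elementary trace computation in ${\rm PSL}_2(\mb{R})$. First I would argue that both sides of the identity depend continuously on $\lambda\in\mc{ML}$: the right-hand side thanks to continuity of the geometric intersection form, and the left-hand side thanks to continuity of earthquake paths together with smooth dependence of length functions on the Teichmüller parameter (more delicately, one needs joint continuity of the \emph{derivative}). Since weighted simple closed curves are dense in $\mc{ML}$, it then suffices to verify the formula in the case $\lambda=c\alpha$ with $c>0$ and $\alpha$ a simple closed geodesic on $E_\lambda(0)\in\T$. By the one-parameter group property of the earthquake flow, we only need to compute the derivative at $t=0$.

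For $\lambda=c\alpha$, the path $E_\lambda$ is a Fenchel--Nielsen twist along $\alpha$ of speed $c$. I would fix a lift $\tilde\gamma\subset\mb{H}^2$ of the geodesic representative of $\gamma$, list the lifts $\tilde\alpha_1,\dots,\tilde\alpha_k$ of $\alpha$ that meet $\tilde\gamma$ inside a fundamental segment for the $\rho_0(\gamma)$-action (with $\rho_0:=\rho_{E_\lambda(0)}$), and choose elements $g_i\in{\rm PSL}_2(\mb{R})$ translating $\tilde\gamma$ from one intersection to the next. Let $T_i(s)$ denote the hyperbolic translation of length $s$ along $\tilde\alpha_i$. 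After the twist of magnitude $s=ct$, the holonomy reads
\[
\rho_t(\gamma)=g_k\,T_k(ct)\,g_{k-1}\,T_{k-1}(ct)\cdots g_1\,T_1(ct).
\]
Differentiating the trace at $t=0$ gives
\[
\left.\tfrac{d}{dt}\right|_{t=0}\!{\rm tr}(\rho_t(\gamma))=c\sum_{i=1}^k{\rm tr}(\rho_0(\gamma)\,X_i),
\]
where $X_i\in{\rm sl}_2(\mb{R})$ is the infinitesimal translation along $\tilde\alpha_i$, conjugated into the correct position. A short ${\rm sl}_2$ calculation gives ${\rm tr}(\rho_0(\gamma)X_i)=2\sinh(L_\gamma/2)\cos(\theta_i)$, where $\theta_i$ is the intersection angle at $\tilde\gamma\cap\tilde\alpha_i$; combined with $L_\gamma=2\,{\rm arccosh}(|{\rm tr}(\rho_0(\gamma))|/2)$, this yields $\dot L_\gamma=c\sum_i\cos(\theta_i)$, which is precisely the integral in the statement when $\lambda=c\alpha$.

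The main obstacle is the trace identity ${\rm tr}(\rho_0(\gamma)X_i)=2\sinh(L_\gamma/2)\cos(\theta_i)$. It can be verified by conjugating so that the axis of $\rho_0(\gamma)$ is a standard geodesic and $\rho_0(\gamma)$ is diagonal with eigenvalues $e^{\pm L_\gamma/2}$, then writing $X_i$ as the traceless $2\times 2$ matrix associated to a geodesic through the intersection point at oriented angle $\theta_i$; the identity reduces to a two-line explicit check using the Killing form. The secondary obstacle is the density/continuity step: since we want a pointwise identity for $\dot L_\gamma$ at every $t$, not just for $L_\gamma$, one must show that the first variation is jointly continuous in $(t,\lambda)$ as $\lambda$ varies along a sequence of weighted simple curves converging in $\mc{ML}$. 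This is the content of Kerckhoff's argument in \cite{K83}, and I would invoke it directly rather than reprove it.
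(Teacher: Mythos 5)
There is nothing in the paper to compare your argument against: Theorem \ref{thm:kerckhoff} is stated as a quoted result of Kerckhoff, with no proof given — it is used only to justify the remark that $|{\dot L}_\gamma|\le i(\gamma,\lambda)$. So your proposal has to be judged on its own terms.

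On its own terms it is the standard and essentially correct route (this is Wolpert's derivative-of-length computation for twists, plus a density argument). Two remarks. First, a normalization issue in the trace identity: if $X_i$ is the \emph{unit-speed} generator of translation along $\tilde\alpha_i$, so that $\exp(sX_i)$ translates by $s$, then conjugating $\rho_0(\gamma)$ to ${\rm diag}(e^{L_\gamma/2},e^{-L_\gamma/2})$ and $X_i$ to $\tfrac12\bigl(\begin{smallmatrix}\cos\theta_i & -\sin\theta_i\\ -\sin\theta_i & -\cos\theta_i\end{smallmatrix}\bigr)$ gives ${\rm tr}(\rho_0(\gamma)X_i)=\sinh(L_\gamma/2)\cos\theta_i$, without your factor of $2$; this is consistent, since differentiating ${\rm tr}(\rho_t(\gamma))=2\cosh(L_\gamma(t)/2)$ produces $\sinh(L_\gamma/2)\,{\dot L}_\gamma$ on the other side, and the factors of $\sinh(L_\gamma/2)$ cancel to give ${\dot L}_\gamma=c\sum_i\cos\theta_i$. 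You should fix the constant, but it does not affect the outcome. Second, the genuine analytic content is exactly where you punt: passing the identity for ${\dot L}_\gamma$ (not just $L_\gamma$) to the limit along $a_n\gamma_n\to\lambda$ in $\mc{ML}$. Invoking Kerckhoff for this is defensible given that the paper itself only cites him, but if you want a self-contained closure, note that $t\mapsto L_\gamma(E_\lambda(t))$ is convex for every $\lambda$ (earthquake paths are affine lines in shear coordinates for a maximal lamination containing ${\rm support}(\lambda)$, so this follows from Theorem \ref{thm:length convex} of this very paper). Pointwise convergence of convex functions forces $f'(t^-)\le\liminf_n f_n'(t)\le\limsup_n f_n'(t)\le f'(t^+)$; since $f_n'(t)$ converges to the right-hand side of the formula, which is continuous in $t$ by continuity of the intersection pairing, the one-sided derivatives of the limit agree everywhere and equal that right-hand side. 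This removes the need for any separate joint-continuity argument for the first variation.
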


In particular, we always have $|{\dot L}_\gamma|\le i(\gamma,\lambda)$ with strict inequality if $\gamma$ intersects $\lambda$ essentially.

\end{document}